\definecolor{mygreen}{HTML}{006622}
\theoremstyle{plain}
\newtheorem{thm}{Theorem}[section] 
\newtheorem{lemma}[thm]{Lemma}
\newtheorem{prop}[thm]{Proposition}
\theoremstyle{definition}
\newtheorem{assumption}[thm]{Assumption}
\theoremstyle{remark}
\newtheorem*{remark}{Remark}
\newtheorem{rmk}[thm]{Remark}
\newtheorem{note}[thm]{Note}
\newtheorem{example}[thm]{Example}
\renewcommand{\a}{\alpha}
\renewcommand{\b}{\beta}
\newcommand{\e}{\varepsilon}
\newcommand{\E}{\mathbb{E}}
\newcommand{\N}{\mathbb{N}}
\newcommand{\R}{\mathbb{R}}
\newcommand{\CC}{\mathcal{C}}
\newcommand{\LL}{\mathcal{L}}
\newcommand{\MM}{\mathcal{M}}
\newcommand{\PP}{\mathcal{P}}
\newcommand{\QQ}{\mathcal{Q}}
\newcommand{\XX}{\mathcal{X}}
\newcommand{\1}{\mathbbm{1}}
\definecolor{mygreen}{HTML}{006622}
\begin{document}

\title[Uniform convergence of numerical methods  for SDEs]{Uniform in time convergence of numerical  schemes for stochastic differential equations via Strong Exponential stability: Euler methods, Split-Step  and Tamed Schemes.}

\author{Letizia Angeli}\address{Maxwell Institute for Mathematical Sciences and Mathematics Department, Heriot-Watt University, Edinburgh EH14 4AS, UK. letizia.angeli92@gmail.com}
\author{Dan Crisan}\address{Department of Mathematics, Imperial College London, London, SW7 2AZ, UK. d.crisan@imperial.ac.uk}
\author{Michela Ottobre}\address{Maxwell Institute for Mathematical Sciences and Mathematics Department, Heriot-Watt University, Edinburgh EH14 4AS, UK. m.ottobre@hw.ac.uk}
%
%
\begin{abstract} We prove a general criterion providing sufficient  conditions under which a time-discretiziation of a given Stochastic Differential Equation (SDE) is a {\em uniform in time} approximation of the SDE.  The criterion is also, to a certain extent, discussed in the paper, necessary. 
Using such a criterion we then analyse the convergence properties of numerical methods for solutions of SDEs; we consider  
Explicit  and Implicit Euler, split-step and (truncated) tamed
Euler methods. 
In particular, we show that, under mild conditions on the coefficients of the SDE (locally Lipschitz and  strictly monotone), these methods produce approximations of the law of the solution of the SDE that converge {uniformly in time}. The bounds we provide are non-asymptotic.
The theoretical results are verified by  numerical examples.\end{abstract}

\subjclass{65C20, 65C30, 60H10 , 65G99, 47D07, 60J60. }
\keywords{Stochastic Differential Equations, numerical methods for SDEs, Explicit and Implicit Euler schemes, Split Step, Tamed Euler Schemes,  Derivative estimates,  Markov Semigroups, Strong Exponential Stability, Uniform in time bounds.}
\maketitle


\maketitle
\section{Introduction}\label{section_setting}

Stochastic differential equations (SDEs) are popular mathematical models for the evolution of dynamical systems that are randomly perturbed. 
Explicitly solvable SDEs are rare in practical applications, so  one often  needs to rely on numerical approximations in order to be able to study the underlying physical phenomenon. There are two sets of numerical approximations for solutions of SDEs: pathwise (or strong) approximations and approximations of the law of the solution of the SDE, also known as weak approximations. 
  Let  $\{x_t, t\ge 0 \}$ denote the solution of the SDE at hand,  let $\{\mathcal{X}^{\delta}_{t_n}\}_{n \in \mathbb N}$ be some time-discretisation (with step $\delta$, so that $t_n=n\delta$) of $\{x_t, t\ge 0 \}$ and let $\{\mathcal{X}^{\delta}_{t}, t\geq 0\}$ be a continuous-time interpolant of  $\{\mathcal{X}^{\delta}_{t_n}\}_{n \in \mathbb N}$.
An approximating process $\{\mathcal{X}^{\delta }_t, t \geq 0\}$ converges in weak sense with order $\beta>0$ to the solution of the SDE if there exist positive constants $C$ and $\delta _{0}$ such that 
\begin{equation}\label{eqn:boundintro}
\sup_{t \in [0,T]}\left| \mathbb E\left[ g\left( x_{t}\right) \right] -\mathbb E\left[ g\left( \mathcal{X}^{\delta
}_t \right) \right] \right| \leq C(T) \,\delta ^{\beta } \ ,
\end{equation}
for any time partition with maximum step size $\delta \in \left(
0,\delta _{0}\right)$ and for any function  $g$ belonging to a suitably chosen class, see \cite[Chapter 7]{kloedenplaten}; in this paper we will consider the class $\mathcal C_b^2$ of real-valued twice differentiable functions with bounded first and second derivative. In general,  the constant $C=C(T)$ in \eqref{eqn:boundintro} is a function of time and, more often than not, theoretical estimates of the constant depend exponentially on time.  In other words, due to the use of Gronwall Lemma or other similar tools, we often end up producing bounds of the form  \eqref{eqn:boundintro} with $C=C(T)$ a growing function of time. 
In this paper, we consider various approximation schemes for SDEs and we show that, under mild conditions on the coefficients of the SDE (locally-Lipschitz and  strictly monotone drift, we will be more precise below), the bound \eqref{eqn:boundintro} holds with a constant $C$ independent of time; that is, we show that the following holds
\begin{equation}\label{eqn:boundintrouniform}
\sup_{t \geq 0}\left| \mathbb E\left[ g\left( x_{t}\right) \right] -\mathbb E\left[ g\left( \mathcal{X}^{\delta
}_t \right) \right] \right| \leq K \,\delta ^{\beta } \, , \quad g \in \mathcal C_b^2\,,
\end{equation}
where now $K>0$ is a constant independent of time (but dependent on $g$), and $\beta$ will depend on the scheme at hand.\footnote{For the sake of clarity,  $\beta$ does not depend on $t$ nor on $g$. }  When the bound \eqref{eqn:boundintrouniform} holds for some constant $K>0$ we say that the approximating scheme  $\mathcal{X}^{\delta}_t$ is a {\em Uniform in Time} (UiT) weak approximation of the given SDE. 

Bounds of the form \eqref{eqn:boundintrouniform} are {\em non-asymptotic} in time, in the sense that they allow one to control in one go both the transient phase of the dynamics and the long time behaviour. We will be more explicit on this point when we discuss relation to literature in Subsection  \ref{subsec:rel to lit}. 
We will obtain such bounds for SDEs whose drift needs {\em not}  be in gradient form. In particular, the method of proof of this paper does not require any a priori knowledge of the invariant measure of the SDE at hand - our assumptions will imply that such processes have an invariant measure but we never need to use this fact explictly and we never require any knowledge on the invariant measure of the SDE itself nor on the invariant measure of the approximation. Hence the work of this paper provides theoretical guarantees on the approximation of both  the transient and the equilibrium state  of a large number of models.


Clearly,  uniform in time bounds of the form \eqref{eqn:boundintrouniform} cannot hold in general; whether the bound \eqref{eqn:boundintrouniform} holds or not will depend  both on the dynamics to approximate and on the chosen approximation. In this paper we state sufficient conditions,  both on the SDE and on the approximating scheme,  to guarantee the validity of the UiT bound \eqref{eqn:boundintrouniform}. We will phrase such criteria both in terms of abstract conditions and then in terms of easily verifiable, explicit conditions on the coefficients of the SDE. Note that if \eqref{eqn:boundintrouniform} holds,
one does not need to adapt the time-step during the simulation to keep a given threshold
accuracy. So this line of research is in a different spirit to adaptive time-stepping methods
such as those introduced in \cite{lamberton2003recursive, kelly2018adaptive}.  Furthermore, the solution $x_t$ of the SDE will depend on its initial datum $x_0=x$ --
to emphasise such a dependence we will use the notation $x_t^{(x)}$ -- hence, in general, the constant $K$ on the right hand side (RHS) of \eqref{eqn:boundintrouniform} will depend on (the euclidean norm of) $x$ too. Our results carefully track this dependence, making it possible for the user to fix an appropriate time-step at the start of the simulation.   

\subsection{Main results and relation to literature}\label{subsec:rel to lit}For a clearer description of the results in this paper, let us introduce some essential notation:  let $x_t^{(x)} \in \mathbb \R^N$ be the solution of the SDE
\begin{equation}\label{SDE_ito}
d x^{(x)}_{t} =U_{0}\left(x^{(x)}_{t} \right) d t+\sqrt{2} \sum_{i=1}^{d} V_{i}\left(x^{(x)}_{t} \right) d B_{t}^{i}, \quad x_{0}^{(x)} =x \in \mathbb{R}^{N}\ ,
\end{equation}
where $U_0,V_1,\dots,V_d$ are  vector fields on $\R^N$, $V_i(x)=\left(V_i^1(x),\dots,V_i^N(x)\right)^T$, assumed smooth throughout the paper,  and $B^1_t,\dots,B^d_t$ are 1-dimensional independent standard Brownian motions.


We denote by $\{\mathcal{P}_{ {t }}\}_{\{t\geq 0\}}$ the associated Markov semigroup, i.e.~the operator defined on measurable bounded functions $f:\R^N \rightarrow \R$ as 
\begin{equation}\label{MarkovSemigroup}
    (\mathcal{P}_{ {t }} f)(x):=\mathbb{E}\left[f\left(x^{(x)}_{t}\right) \right], \quad x \in \mathbb{R}^{N}\, . \footnote{The semigroup associated with a Markov process is generally well defined for bounded and measurable functions $f$. Under our assumptions on the drift of the SDE - see later sections - this semigroup is well defined also for functions in $\mathcal C_b^2$ (which grow linearly). Using the methods of \cite{crisan2022poisson} we could show that this semigroup, as much as the semigroup \eqref{def:discrete semigr} are well defined also for functions $f$ that grow polynomially, but we refrain from doing so here. }
\end{equation}

We first give a brief description of the content of this paper and then comment on the results of each section, in turn.  

\begin{itemize}
    \item In Section \ref{section_general}, we study  the weak error for generic time-discretisation schemes $\mathcal{X}_t^{\delta}$ for  SDEs of the form \eqref{SDE_ito}. The main result of this section,  Proposition \ref{prop_weak}, provides a general, abstract criterion, stating sufficient conditions   for a (any) time-discretisation $\mathcal{X}_t^{\delta}$ to be a UiT approximation of the SDE \eqref{SDE_ito}. While this criterion is an abstract one, all the results of subsequent sections are stated in terms of explicit, easily verifiable and rather general conditions on the coefficients of the SDE. 	
    \item In Section \ref{section_EM}, we apply the abstract criterion of Section \ref{section_general}, Proposition \ref{prop_weak},  to the case in which the numerical approximation is given by an explicit Euler-Maruyama scheme (introduced in equation \eqref{EM}) and the  SDE  \eqref{SDE_ito} has globally Lipschitz coefficients. We show that, in this setting, the explicit Euler-Maruyama scheme is a  UiT approximation of the SDE \eqref{SDE_ito},  see Proposition \ref{prop_EM} for a precise statement. 
    \item In Section \ref{section_split}, we consider split-step (see \eqref{SS2}-\eqref{SS1}) and the Implicit Euler scheme (see \eqref{general_implicit}) for SDEs of the form \eqref{SDE_ito}, when the drift is just locally Lipschitz and strictly monotone, see Assumption \ref{ass_onesidedLip} and Assumption \ref{ass_for_modified_expdecay}, and the diffusion coefficient is bounded. Under these assumptions we provide UiT  estimates for the weak error  associated to such schemes, see Theorem \ref{thm_split} for  split-step and Theorem \ref{thm_implicit} for Implicit Euler.  These are the main results of Section \ref{section_split}. The proofs in this section do not leverage directly the abstract criterion of Section \ref{section_general}; rather,  they use the fact that split-step and implicit Euler (for SDEs with locally Lipschitz drift) can be interpreted  as explicit Euler-Maruyama schemes for a modified SDE, see \cite[Section 3.3]{higham2002strong}. The modified SDE has globally Lipschitz coefficients (however with Lipschitz constants depending on $\delta$, which leads to technical subtleties). Using this fact, the proof is based on  the result of Section \ref{section_EM} for the Euler-Maruyama scheme for SDEs with globally Lipschitz coefficients.  
    \item In Section \ref{section_tamed}, we prove  UiT results for a truncated version of the tamed Euler scheme, see \eqref{tamed}, assuming again that the drift $U_0$ of the SDE \eqref{SDE_ito} is only locally Lipschitz and strictly monotone, see Theorem \ref{thm_weak_tamed}, which is the main result of Section \ref{section_tamed}.  We refer to scheme \eqref{tamed}  as to  {\em truncated tamed Euler} or {\em Newton-tamed Euler}, we explain the reason for this name in Section \ref{section_tamed}. We had derived this scheme independently of other works and  then found out that an analogous truncation of tamed Euler had already been developed in  \cite{sabanis2016euler}. So the scheme we consider here has been introduced (to the best of our knowledge) in \cite{sabanis2016euler}.  We prove UiT convergence for this scheme by applying directly  the abstract criterion of Section \ref{section_general}, Proposition \ref{prop_weak}. The variant of the tamed scheme that we consider appears to be `more stable' than standard tamed, we make more comments on this below and, more in detail, in Section \ref{section_tamed}.
\end{itemize}
Examples of the drift coefficients  satisfying our assumptions can be found in Example \ref{toy_eg} (and in other examples in subsequent sections). Throughout the paper, various remarks are devoted to showing that these running examples satisfy the assumptions of our main results.  

{\bf Relation to literature. } Let us clarify that in this paper we consider  time-discretisation schemes with constant time step $\Delta t = \delta$ (for which the $n$-th step is $t_n=n\delta$), which can be described as a family of discrete-time Markov processes $\{\mathcal{X}^\delta_{t_n}\}_{n\in\N}$, $\delta>0$. The analysis of schemes with varying time-step has been considered e.g.~in \cite{pages2023unadjusted} (for Langevin algorithms), and schemes with random step size have been considered in \cite{alfonsi2021generic}.    The focus of the former is on approximating the invariant measure of the SDE, not the law at time $t$,  for each positive time, which is what we are concerned with here; the focus of the latter is on building higher order schemes (however the estimates there are not necessarily UiT). 

As we have already remarked, the bound \eqref{eqn:boundintrouniform} is uniform in time and  non-asymptotic. To give broader context, let us observe that,  in contrast,   the type of uniform in time  bounds most  often found in the literature are asymptotic (in time), see e.g. \cite{mattingly2002ergodicity, bou2023mixing, schuh2024convergence, duncan2017using, brehier2014approximation} (without any claim to completeness of references) and references therein,  aimed at understanding how well $\mathcal X_t^{\delta}$ approximates the invariant measure $\pi$ of the process $x_t$ (when such an invariant measure exists).   Generally speaking, by `asymptotic' bounds we mean bounds of the form   
\begin{equation}\label{asymptboundintro}
\left|  \mathbb E\left[ g\left( \mathcal{X}^{\delta
}_t \right) - \pi(g) \right] \right| \leq K e^{-\lambda t} + \tilde K \delta^{\hat \beta}
\end{equation}
for some $K, \tilde K, \hat \beta>0$ (and independent of time), though $K, \tilde K, \lambda$ may depend on the approximation parameter $\delta$. Here $\pi(g)$ denotes the integral of $g$ with respect to $\pi$. These bounds are usually obtained by considering the invariant measure of the dynamics $\mathcal X_t^{\delta}$, say $\pi^{\delta}$,  so that, by triangular inequality, the first added on the RHS of \eqref{asymptboundintro} comes from studying exponential convergence of $\mathcal X_t^{\delta}$ to  $\pi^{\delta}$ while the second comes from estimating the asymptotic bias, i.e. the distance between $\pi^{\delta}$ and the invariant measure $\pi$ of $x_t$. Depending on the approximation $\mathcal X_t^{\delta}$ at hand, \footnote{Here we are thinking of general types of approximation, not necessarily just restricting to numerical approximations. } one might not know explicitly how $K$ and $\lambda$ depend on $\delta$. In some instances it might be the case that such constants do not depend on $\delta$ at all, see e.g. \cite{stoltz2018longtime}. Either way the bound \eqref{asymptboundintro} may not imply that the true dynamics $x_t$ and its approximation $\mathcal X_t^{\delta}$ are `close', for $\delta$
small enough, when $t$ is finite, i.e. when both processes are in their transient state. This is not a  problem if one is interested in sampling, but it is an actual limitation if one would like to simulate the whole dynamics (for any $t\geq 0$) accurately -- task which is relevant when one is interested in general modelling e.g. of biological processes rather than in sampling. Moreover bounds of the form \eqref{asymptboundintro} are commonly studied for the case when the invariant measure of the process $x_t$ is known explicitly. In contrast, as we have already remarked, the method of proof in this paper does not require any a priori knowledge on the form of the invariant measure of the process $x_t$, nor on the invariant measure of the approximation $\mathcal X_t^{\delta}$. {\footnote{To be precise, the assumptions we make on the drift coefficients in Section \ref{section_EM}, Section \ref{section_split} and Section \ref{section_tamed} will imply that the process $x_t$ has an invariant measure, but we never need to use the explicit form of such a measure. More comments on this in Note \ref{note_on_main_ass}. } }

For clarity, {we emphasize that in comparing \eqref{eqn:boundintrouniform} with \eqref{asymptboundintro} we primarily mean to compare the form of time-dependence on the RHS of such bounds. In particular we observe that results of the form \eqref{eqn:boundintrouniform}  imply  a bound on the asymptotic bias  (by just letting $t$ to infinity on the LHS of \eqref{eqn:boundintrouniform}) and they also imply that the dynamics $\mathcal X_t^{\delta}$ will  approximate the invariant measure of the SDE correctly (assuming the SDE does admit one, see \cite[Corollary 3.6]{crisan2021uniform} for a more careful statement of this fact).  So, in this sense, and in view of the discussion so far (on the fact that \eqref{asymptboundintro} may fail to capture the transient state),   they are stronger than bounds of the form \eqref{asymptboundintro}. Nonetheless, as far as the approximation of the invariant measure is concerned, the methods employed to prove bounds of the form \eqref{asymptboundintro} may give a better order of convergence with less work, i.e. those proofs may result in  $\hat \beta> \beta$ \cite{talay1990expansion}.}

Let us now comment on the  results of this paper, section by section, relating them to existing literature. 

The abstract criterion we state in Section \ref{section_general} is rather general, and it can be in principle applied to any approximation of SDEs of type \eqref{SDE_ito}, whether produced numerically or not (and indeed Proposition \ref{prop_weak} has recently inspired the developments in \cite{schuh2024conditions}, see in particular the very nice `Informal theorem' therein). Such a criterion shows that, besides a local consistency condition on the approximating scheme, see (\ref{cond_Euler}), one needs two main conditions to hold in order for the UiT bound \eqref{eqn:boundintrouniform} to be satisfied,  namely:  
exponential decay in time of the space-derivatives of the semigroup   $\PP_t$ \eqref{MarkovSemigroup} associated with the SDE \eqref{SDE_ito} (see condition \eqref{cond_SDE}) and some a-priori uniform in time control on the discretisation scheme (see condition \eqref{cond_bound_Phi}). We refer to the first of these two conditions as {\em Strong Exponential Stability} (SES). We explain below the reason for this nomenclature and in Section \ref{section_general} we will be  precise on how many space-derivatives we control. \footnote{We use the term SES somewhat generically, i.e. irrespective of the number of space derivatives for which time-decay is shown. We could be more precise and say `SES of order $k$' if $k$ derivatives are controlled but we refrain from doing so in this introduction.}  Consistently with the earlier remark that the validity of the UiT estimate \eqref{eqn:boundintrouniform} will depend both on the dynamics to approximate and on the approximating scheme, the first of these conditions is a condition on the SDE, the second is a condition on the approximation. 
We will make further technical  remarks on these conditions in Note \ref{note_on_main_ass}. For the time being, we make some more general observations which will hopefully help intuition. It is easy to show (see e.g.~\cite{cass2021long}) that, if SES holds then the semigroup $\mathcal P_t$ enjoys  the following property (hence the name Strong Exponential Stability): 
\begin{equation}\label{H}
\left\vert (\PP_tf)(x) - (\PP_tf)(y) \right\vert =\left \vert 
\mathbb E f(x_t^{(x)}) - \mathbb E f(x_t^{(y)})
\right \vert \leq C e^{-\lambda t}\ ,
\end{equation}
    for some constants $ \lambda>0$ and $C>0$, the latter generally dependent on $x,y \in \R^N$.  
That is, if we start the dynamics from two different initial conditions, then the associated laws will converge to each other {exponentially fast}. We note in passing that this is related to the concept of pullback attractor, see e.g.~\cite{kloedenplaten}, and to the definition of Lyapunov exponent, see \cite{castro2022lyapunov} and references therein. 
  The second and third authors of this paper have pushed a programme to show that SES is conceptually key and technically a flexible tool to produce uniform in time results, whether the approximation is produced numerically or not, and indeed  SES has been successfully used as a key ingredient in \cite{crisan2022poisson}, where the approximation is produced via averaging methods; in \cite{barre2021fast}, where the approximation is given by a multi-scale particle system;  and in \cite{crisan2021uniform} for other numerical approximations. So, within this stream of literature, the general purpose of this paper is to show that this approach is successful also when considering a larger class of (numerical) approximations.  Furthermore, to make this tool ready for practical use, we point out that  the literature by now contains a number of user-friendly criteria  for SES, see e.g.~\cite{lorenzi2006analytical, crisan2016pointwise,crisan2021uniform, cass2021long} and references therein and in Section \ref{s:s} we will provide further criteria for SES.  

Regarding the second condition, i.e.~the UiT a priori control on the numerical scheme,  this is gained in this paper through UiT estimates for the moments  of the considered discretisations, see Lemma \ref{standard_eg} and Note \ref{note_on_main_ass} on the matter. Conquering such estimates is one of the technical difficulties we tackled in this paper. We flag up the reference \cite{tretyakov2013fundamental}, which also contains an interesting scheme of proof for  obtaining uniform in time moment bounds.  \\
Further comments on these two conditions, on the extent to which they are `necessary' and on relations to other frequently considered sets of assumptions have been made in \cite{crisan2021uniform}, so we don't repeat here this kind of observations, however see Note \ref{note_on_main_ass} for further comments. 
We also point out that a different approach to proving UiT results has been proposed in \cite{del2022backward}. 

 The main result of Section \ref{section_EM}, Proposition \ref{prop_EM}, shows UiT convergence of the explicit Euler scheme, i.e.~the validity of the bound \eqref{eqn:boundintrouniform} when $\mathcal{X}^{\delta}$ is the explicit Euler scheme,  assuming the coefficients of the SDE are globally Lipschitz. Besides perhaps some intrinsic interest, this result is for us a necessary technical step in view of the strategy of proof that we adopt in Section \ref{section_split}.  Proposition \ref{prop_EM} can also be seen as an extension of  \cite{crisan2021uniform}, where the authors  prove the UiT result \eqref{eqn:boundintrouniform}  for explicit Euler in the case in which the coefficients of the SDE are bounded. However there is an important difference between  Proposition \ref{prop_EM} and \cite{crisan2021uniform}: the result in \cite{crisan2021uniform}, while formulated under more restrictive assumptions, is optimal, in the sense that \eqref{eqn:boundintrouniform}  is not only shown to hold for a constant $K>0$ independent of  time, but also for weak order  $\beta=1$ (which is optimal for explicit Euler). Proposition \ref{prop_EM} holds under a more general assumption on the coefficients but shows only order of convergence $\beta=1/2$. This is due to the fact that in \cite{crisan2021uniform} the authors controlled four space-derivatives of the Markov semigroup, whereas here we `stop the expansion' at the second derivative and control only the first two derivatives. As our main focus is on the UiT aspects of the estimates, this strikes a compromise between simplicity (shortening proofs) and optimality. However, at the price of controlling more derivatives, the method we use could still be employed to gain results that are optimal in $\beta$. Finally, it is known that when the coefficients of the SDE are non globally Lipschitz, explicit Euler `blows up' (for large initial data), see \cite[Section 6.3]{mattingly2002ergodicity}, hence the global Lipschitz assumption in  Proposition \ref{prop_EM} cannot be further relaxed.

As for literature related to the content of Section \ref{section_split}, in \cite{liu2022backward} the authors provide UiT strong error estimates for implicit Euler, with order of convergence $\beta=1/2$, see \cite[Theorem 4.2]{liu2022backward}. The set of criteria under which they work is in spirit similar to ours (though not directly comparable because a specific structure of the drift is assumed there), though their assumptions on the diffusion coefficient are more general. For convenience, we assumed the diffusion coefficient to be bounded throughout; this could be relaxed within our scheme of proof, we don't do so again to keep  the length of proofs at bay. However our result keeps track of the dependence of the constant $K$ in \eqref{eqn:boundintrouniform} on the size of the initial datum; such a dependence is not studied in \cite{liu2022backward}.

In \cite{buckwar2022splitting} the authors illustrate how the choice of the initial state $X_0=x\in\R^N$ influences the behaviour of different Euler schemes, under one-sided Lipschitz conditions on the coefficients of the SDE. In particular, they show that the standard tamed scheme is very sensitive to the size of the initial datum (and for this reason introduce splitting schemes which ameliorate this problem and are more stable in this sense). 
We also  observed this behaviour for standard tamed Euler schemes, as illustrated in Section \ref{section_tamed} and, with this motivation, we consider the truncated tamed scheme  \eqref{tamed} which is significantly less sensitive to the choice of initial datum, as shown both by Figure \ref{fig_tamed} and by Theorem \ref{thm_weak_tamed}.  Related to this fact, we point out that, under analogous conditions to those we impose in Section \ref{section_tamed},
\cite{brehier2020approximation} provides weak error estimates for tamed Euler schemes, with a constant $C(t)$ in \eqref{eqn:boundintro} which grows at most 
polynomially in time. We could not improve on these bounds (and we are not sure whether it is possible to achieve such an improvement).  However we explain why the truncated tamed scheme is more stable than the standard tamed scheme and prove a UiT result for the latter  scheme, showing the (good) dependence of the weak error on the size of the initial datum.


\section{A general result for uniform weak Convergence}\label{section_general}

In this section, we present a general approach for proving the uniform in time weak convergence of time-discretisation schemes. The main result of this section, Proposition \ref{prop_weak}, will be applied in subsequent sections to prove the weak convergence of various numerical schemes. We first state the proposition, then make a number of comments on the assumptions under which it holds (Assumption \ref{ass_weak}), see Note \ref{note_on_main_ass} and Lemma \ref{standard_eg}, and postpone the proof to later in the section.    

Let $x_t$ be the solution of \eqref{SDE_ito} with associated Markov semigroup $\PP_t$ (see \eqref{MarkovSemigroup}). For each $\delta>0$,   let $\{\mathcal{X}^\delta_{t_n}\}_{n\in\N}$ be a  discrete-time Markov processes on  $\R^N$, $\{\mathcal{X}^\delta_{t_n}\}_{n\in\N} \subset \R^N$,   with time step $\delta$ (so that $t_n=n\delta$) and with associated  discrete-time Markov semigroup $\MM^\delta_n$, defined as
\begin{equation}\label{def:discrete semigr}
    \MM^\delta_n f(x)\,:=\, \E_x\left[f(\mathcal{X}^\delta_{t_n})\right]\ ,\quad x\in\R^N\ ,\; f \in \mathcal C_b^2(\R^N),
\end{equation}
where in the above and throughout $\E_x$ denotes expectation with respect to the law of the process, conditional on the process starting at $x \in \R^N$, $\mathcal X^{\delta}_0=x$.
In what follows, we denote by $\nabla f$ and $\nabla^2 f$ respectively the gradient vector and the Hessian matrix of a function $f$ in $\CC^2(\R^N)$. Furthermore, we consider the seminorm
\begin{equation}\label{norm}
   \|f\|_{\CC^2_b} := \sup_x\left(\left| \nabla f(x)\right|\,+\,\left\| \nabla^2  f(x)\right\|\right)\ ,
\end{equation}
on the space of functions $\CC^2_b(\R^N)$, where $\|A\|:=\sqrt{\sum_{i,j}|a_{ij}|^2}$ denotes the Frobenius norm of a matrix.

\newcommand{\laa}{\tilde{\lambda}}
\begin{assumption}\label{ass_weak}
 
We assume the following:
\begin{itemize}
\begin{subequations}
    \item[(i)] {\em Strong Exponential Stability}: there exist constants $K_0,\,\laa>0$ such that
    \begin{equation}\label{cond_SDE}
        \|\PP_t f\|_{\CC^2_b}\,\leq\,K_0\, \|f\|_{\CC^2_b}\cdot e^{-\laa t}\ ,
    \end{equation}
    for any $t\geq 0$, and $f\in\CC_b^2(\R^N)$;
    \item[(ii)] {\em Local consistency and a-priori control}: there exist  positive functions $\phi, \Phi: \R^N\times \R^+\to [0,+\infty)$ and a constant $K_1>0$ such that the following two conditions hold:
    \begin{equation}\label{cond_Euler}
        \left|\E_x\left[f(\mathcal{X}^\delta_{\delta})\,-\, f(x_\delta)\right]\right|\,\leq\, K_1\|f\|_{\CC^2_b}\cdot\phi(x,\delta)\ ,
    \end{equation}
    for every $x\in\R^N$, $f\in\CC_b^2(\R^N)$, and $\delta>0$, and 
    \begin{equation}\label{cond_bound_Phi}
        \sup_{n\in\N} \E_x[\phi(\mathcal X^{\delta}_{t_n},\delta)] = \sup_{n\in\N}\,\left(\MM^\delta_n \phi\right)(x,\delta)\,\leq\,\Phi(x,\delta)\ ,
    \end{equation}
    for every $x\in\R^N$ and $\delta>0$ small enough. 
\end{subequations}
\end{itemize}

\end{assumption}

In the above an throughout, by  $(\MM^{\delta}\phi)(x,\delta)$ we mean the  semigroup $\MM^{\delta}$ applied to the function $y\rightarrow \phi(y, \delta)$ (with $\delta$ viewed as fixed). We could have simply used the notation $(\MM^{\delta}\phi)(x)$ instead of 
$(\MM^{\delta}\phi)(x,\delta)$; we use the latter to emphasize that this quantity depends on $\delta$ both through $\MM^{\delta}$ and through $\phi(x,\delta)$. 

\begin{prop}\label{prop_weak}
With the notation introduced so far, under Assumption \ref{ass_weak}, the following bound holds for any $f\in\CC^2_b(\R^N)$ and $\delta>0$ small enough:
\begin{equation}\label{weak_conv}
    \sup_{n\in\N} \,\left| \E_xf(\mathcal{X}^{\delta}_{t_n})-\E_x f(x_{t_n})\right|\,\leq\, \frac{K\|f\|_{\CC^2_b}\cdot \Phi(x,\delta)}{1-e^{-\laa\,\delta}}\  ,
\end{equation}
with $K:=K_1( K_0\vee 1 )$, where $K_0, K_1$ and $\laa$ are as in Assumption \ref{ass_weak}.
\end{prop}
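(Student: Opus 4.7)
The plan is to adapt the classical Talay--Tubaro telescoping argument to the uniform in time setting, where the key new ingredient is that the exponential decay provided by Strong Exponential Stability \eqref{cond_SDE} lets us sum the one-step errors without any growing prefactor. Concretely, I would fix $x\in\R^N$, $f\in\CC_b^2(\R^N)$, $n\in\N$ and write the global weak error as the telescoping identity
\begin{equation}\label{eq:telescope}
    \MM^\delta_n f(x)\,-\,\PP_{n\delta}f(x)
    \;=\;\sum_{k=0}^{n-1}\MM^\delta_k\bigl(\MM^\delta_1-\PP_\delta\bigr)\PP_{(n-k-1)\delta}\,f\,(x)\ ,
\end{equation}
which is valid thanks to the semigroup properties $\MM^\delta_{k+1}=\MM^\delta_k\circ\MM^\delta_1$ and $\PP_{(n-k)\delta}=\PP_\delta\circ\PP_{(n-k-1)\delta}$. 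The case $n=0$ is trivial since $\MM^\delta_0=\PP_0=\mathrm{Id}$.

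Next I would set $g_k:=\PP_{(n-k-1)\delta}f$ and control each summand in \eqref{eq:telescope} by chaining the three items of Assumption \ref{ass_weak}. First, SES \eqref{cond_SDE} (applied to $f$ at time $(n-k-1)\delta\ge 0$) yields
\begin{equation*}
    \|g_k\|_{\CC_b^2}\,\le\,K_0\,\|f\|_{\CC_b^2}\,e^{-\l(n-k-1)\delta}\qquad\text{for }0\le k\le n-2,
\end{equation*}
while for $k=n-1$ we trivially have $\|g_{n-1}\|_{\CC_b^2}=\|f\|_{\CC_b^2}$, which is where the factor $K_0\vee 1$ in the final constant will appear. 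Because $g_k\in\CC_b^2$, the local consistency estimate \eqref{cond_Euler} applied to $g_k$ gives, at every point $y\in\R^N$,
\begin{equation*}
    \bigl|(\MM^\delta_1-\PP_\delta)g_k\,(y)\bigr|
    \;=\;\bigl|\E_y[g_k(\mathcal X^\delta_\delta)-g_k(x_\delta)]\bigr|
    \;\le\;K_1\,\|g_k\|_{\CC_b^2}\,\phi(y,\delta)\ .
\end{equation*}
Taking absolute values inside $\MM^\delta_k$ (which is a positivity-preserving operator) and using the a-priori control \eqref{cond_bound_Phi}, evaluated at $y=\mathcal X^\delta_{t_k}$ under $\E_x$, yields
\begin{equation*}
    \bigl|\MM^\delta_k(\MM^\delta_1-\PP_\delta)g_k\,(x)\bigr|
    \;\le\;K_1\,\|g_k\|_{\CC_b^2}\,\E_x[\phi(\mathcal X^\delta_{t_k},\delta)]
    \;\le\;K_1\,\|g_k\|_{\CC_b^2}\,\Phi(x,\delta)\ .
\end{equation*}

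Finally, I would plug the SES bound on $\|g_k\|_{\CC_b^2}$ into \eqref{eq:telescope} and sum the geometric series:
\begin{equation*}
    \sum_{k=0}^{n-1}e^{-\l(n-k-1)\delta}\;=\;\sum_{j=0}^{n-1}e^{-\l j\delta}\;\le\;\frac{1}{1-e^{-\l\delta}}\ ,
\end{equation*}
which is exactly where the uniformity in $n$ (equivalently, in $t$) enters: because the SES factor $e^{-\l(n-k-1)\delta}$ decays, the bound is independent of $n$ and depends on $\delta$ only through the harmless factor $(1-e^{-\l\delta})^{-1}$. Combining the above displays produces \eqref{weak_conv} with $K=K_1(K_0\vee 1)$, and the supremum over $n$ follows by taking it on the left-hand side. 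The main conceptual step — and the only one that is at all subtle — is the observation that SES must be applied to the intermediate function $g_k=\PP_{(n-k-1)\delta}f$ rather than to $f$ itself, and that this is legitimate precisely because \eqref{cond_SDE} is a bound on a $\CC_b^2$-seminorm that is preserved by the semigroup; everything else is bookkeeping.
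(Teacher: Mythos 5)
Your proof is correct and is essentially the paper's argument: the explicit telescoping identity you write down is exactly what the paper's induction on $n$ produces when unrolled, and you invoke the three conditions of Assumption \ref{ass_weak} at the same places (SES applied to $\PP_{(n-k-1)\delta}f$, local consistency with that evolved test function, the a-priori control to replace $\MM^\delta_k\phi$ by $\Phi$, and the geometric series). The careful handling of the $k=n-1$ term, which is where the factor $K_0\vee 1$ originates, also matches the paper.
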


In the note below we make comments to help the reader interpret the conditions in Assumption \ref{ass_weak};  in Lemma \ref{standard_eg} we state a criterion to check \eqref{cond_bound_Phi} in practice. We will discuss conditions under which SES (the bound \eqref{cond_SDE}) holds in Section \ref{s:s}.  Examples of SDEs and approximation schemes satisfying these assumptions are provided throughout the paper, see e.g. Example \ref{toy_eg} in the next section. 

\begin{note}\label{note_on_main_ass}
Some observations:
\begin{itemize}
\item Assumption \ref{ass_weak} (i) is an assumption on the SDE itself, while Assumption \ref{ass_weak} (ii) is an assumption on the discretization.  For the sake of clarity we emphasize that condition \eqref{cond_bound_Phi} does not follow from \eqref{cond_Euler}, these are two distinct requirements. The positive function $\phi$ from condition \eqref{cond_Euler} encodes the bound for the local weak error of the time approximation (local in the sense that it is the weak error after one time-step); clearly, such an error depends both on the initial state $x\in\R^N$ and on the time-step $\delta>0$. The schemes we consider in this paper satisfy   \eqref{cond_Euler}   with functions $\phi$ of the form $\phi(x,\delta)=\delta^\a |x|^q +\delta^\beta$, for some constants $q,\a,\beta>0$. In this case, condition \eqref{cond_bound_Phi}  boils down to requiring time-uniform $q$-moment bounds. A practical way of checking that \eqref{cond_bound_Phi} holds is by using  Lemma \ref{standard_eg} below. Using such a lemma  one obtains  that, if $\phi$ is as in the above (and if the assumption of the lemma is satisfied), then $\Phi$ (appearing in \eqref{cond_bound_Phi}) is substantially  of the form $\Phi(x,\delta) = \delta^\a |x|^q +\delta^{\gamma} $, for some $\gamma>0$. 
Since the term $1-e^{-\laa \delta}$ at the denominator of \eqref{weak_conv} is 
$o(\delta)$, if $\a, \gamma>1$ then Proposition \ref{prop_weak} yields the desired UiT result. Note that the term $1-e^{-\laa \delta} \approx o(\delta)$  makes one lose one power of $\delta$, which is in fact expected in going from local to global error. 

    \item The difference $\E_xf(\mathcal{X}^{\delta}_{t_n})-\E_x f(x_t)$ in \eqref{weak_conv} can be written  as the telescopic sum of $n$ contributions, namely
    \begin{equation}\label{basic_idea}
       \E_xf(\mathcal{X}^{\delta}_{t_n})-\E_x f(x_t)\,=\, \sum_{k=1}^n \left( \E_xf(\mathcal{X}^{\delta,k}_{t_n})-\E_x f(\mathcal{X}^{\delta,k-1}_{t_n})\right)\ ,
    \end{equation}
    where $\mathcal{X}^{\delta,k}_{t_n}$ denotes the process that evolves according to the time discretisation until time $t_k$ and then evolves according to the SDE \eqref{SDE_ito} from time $t_k$ to time $t_n$; in particular, $\mathcal{X}^{\delta,0}_{t_n}=x_{t_n}$ and $\mathcal{X}^{\delta,n}_{t_n}=\mathcal{X}^{\delta}_{t_n} $. {We don't use explicitly the decomposition \eqref{basic_idea} in the proof of Proposition \ref{prop_weak} (as written,  this decomposition is more pathwise in nature than we will need), but we find it to be a useful way of explaining each of the requirements in Assumption \ref{ass_weak} and the intuition behind  the proof of Proposition \ref{prop_weak},  which substantially requires bounding each of the addends  in \eqref{basic_idea} and showing that \eqref{basic_idea} is a convergent series.} \\
     The processes $\mathcal X_{t_n}^{\delta,k}$ and $\mathcal X_{t_n}^{\delta,k-1}$ coincide up to time $t_{k-1}$ (included), as up to $t_{k-1}$ they are  copies of the time-discretisation 
     $\mathcal{X}^\delta_{t_n}$, but in general differ for any time $t>t_{k-1}$. Bounding each of the terms $\E_xf(\mathcal{X}^{\delta,k}_{t_n})-\E_x f(\mathcal{X}^{\delta,k-1}_{t_n})$ requires understanding the difference between such processes at time $t_n$. To understand such a difference observe that on the time interval $[t_{k-1}, t_k]$ the process $\mathcal X_{t_n}^{\delta,k-1}$ evolves according to the SDE \eqref{SDE_ito} (with initial condition at time $t_{k-1}$ equal to $\mathcal X_{t_{k-1}}^{\delta}$) while $\mathcal X_{t_n}^{\delta,k}$ evolves according to the time-discretization. From time $t_k$ on they both evolve according to the SDE, but with different initial conditions at time $t_k$, namely  $\mathcal{X}^{\delta,k-1}_{t_k}$ and $\mathcal{X}^{\delta}_{t_k}$, respectively.  In view of the observation \eqref{H},  this is where SES,  condition \eqref{cond_SDE}, comes into play, with the exponential decay ensuring summability. However, the constant $C$ in \eqref{H} depends on the distance between  the initial conditions $x$ and $y$, which are $\mathcal{X}^{\delta,k-1}_{t_k}$ and $\mathcal{X}^{\delta}_{t_k}$ in the case at hand. That is, using \eqref{H} and the definition of Markov semigroup,  for $n\geq k$, we have
$$\left\vert \mathbb E f(\mathcal X_{t_n}^{\delta,k}) -  \mathbb E f(\mathcal X_{t_n}^{\delta,k-1})\right\vert \leq C( \mathcal X_{t_k}^{\delta}, \mathcal X_{t_n}^{\delta,k-1}) e^{-\laa (t_n-t_k)} \,,$$
where again $C( \mathcal X_{t_k}^{\delta}, \mathcal X_{t_n}^{\delta,k-1})$ is a constant depending on the distance between $ \mathcal X_{t_k}^{\delta} $ and $ \mathcal X_{t_n}^{\delta,k-1}$. To estimate such a distance, we observe that, 
by definition of the two processes on the time interval $[t_{k-1}, t_k]$ the (expected value of the) difference between $\mathcal{X}^{\delta}_{t_k}$ and $\mathcal{X}^{\delta,k-1}_{t_k}$ is precisely the local weak error, i.e. the error over one time step, when both the discretization and the SDE are initialised at $\XX^\delta_{t_{k-1}}$;  by \eqref{cond_Euler},  this error is bounded by $\phi(\XX^\delta_{t_{k-1}},\delta)$. Hence, in order for \eqref{basic_idea} to be converging, we enforce $\phi(\XX^\delta_{t_{k-1}},\delta)$ to be independent of $k\in\N$: condition \eqref{cond_bound_Phi} ensures 
    we can bound these errors uniformly in $k\in\N$.   
\item The above discussion gives one way of understanding the role of  SES, condition \eqref{cond_SDE}, within our scheme of proof.  A different and more general way of intuitively interpreting SES has been presented in \cite{crisan2022poisson, crisan2021uniform} and we will briefly recall this different interpretation here, for completeness. 
To explain in a simplified setting why SES is key to proving uniform in time estimates, let us consider two Markov semigroups, say $\mathcal T_t$ and $\bar{\mathcal {T}}_t$. With standard manipulations, the difference between such semigroups can be expressed in terms of the difference between their respective generators, say $\mathcal G$ and 
			$\bar{\mathcal G}$, as follows
			\begin{align*}
				(\bar{\mathcal {T}}_t \varphi)(z) -  (\mathcal{T}_t \varphi)(z) & =  \int_0^t ds \frac{d}{ds} \mathcal{T}_{t-s}\bar{\mathcal{T}}_s \varphi (z)  =  
				\int_0^t \!\!\!ds \,  \mathcal{T}_{t-s} (\bar{\mathcal G} - \mathcal G) \bar{\mathcal T}_s \varphi (z) \\
				&\leq \int_0^t ds \| \mathcal T_{t-s} (\bar{\mathcal G} - \mathcal G) \bar{\mathcal T}_s \varphi \|_{\infty}
				\leq \int_0^t ds \|  (\bar{\mathcal G} - \mathcal G) \bar{\mathcal T}_s \varphi \|_{\infty} \,.
			\end{align*}
		If $\mathcal G$ and $\bar{\mathcal{G}}$ are differential operators then the latter difference involves derivatives of the semigroup $\bar{\mathcal T}_t$. If such derivatives decay exponentially fast in time, then the difference between such semigroups can be estimated by a constant (independent of time) rather than with exponential growth, which is what would happen by using Gronwall-type arguments. This line of reasoning, when applied to $\PP_t$ and $\MM_t^{\delta}$ rather than $\bar{\mathcal T}_t$ and $\mathcal T_t$, inspires our approach - though, as we have already explained,  the precise proof does not  follow the above calculation.

    
    \item The conditions of Proposition \ref{prop_EM} are substantially necessary,  in the sense that they could be quantitatively relaxed (e.g. the exponential decay in \eqref{cond_SDE} can be replaced by any other integrable in time decay), but they are qualitatively necessary.  In \cite{crisan2021uniform} the authors used SES and control on moments of the SDE to prove that the Explicit Euler scheme provides a UiT approximation of the SDE (when the SDE has bounded coefficients). In that setting they showed (constructively, through examples and counterexamples) that one assumption does not imply the other and made several observations on the extent to which these assumptions are necessary. Such observations apply more broadly here, but we don't repeat them and refer to \cite{crisan2021uniform}. However, as further corroborating evidence for the key role of SES type of conditions, we do recall that Lyapunov conditions have been shown to be in general neither sufficient nor necessary in order to obtain uniform in time control \cite{crisan2021uniform, mattingly2002ergodicity}. The relation between SES and Lyapunov conditions has been investigated in \cite{crisan2021uniform}. 
    \item {SES, i.e. Assumption \ref{ass_weak} (i), does imply the contractivity property \eqref{H}. However it does not imply that the process $x_t$ admits an invariant measure. It only implies that, if the invariant measure exists, then it is unique.  This has been shown in  \cite{cass2021long} and discussed in \cite{crisan2021uniform} as well. Nonetheless the explicit assumptions on the coefficients of the SDE that we will enforce in subsequent sections in order to prove either SES or the a priori control on the approximation do imply that the process has a unique invariant measure. So one might conclude that the use we make in subsequent sections of Proposition \ref{prop_weak} is suboptimal. Proving UiT convergence for processes which do not have an invariant measure seems to be a big open problem, to the best of our knowledge. }
\end{itemize}
\end{note}

\begin{lemma}\label{standard_eg}
Assume that condition \eqref{cond_Euler} is satisfied, and that the function $\phi(x,\delta)$ defined in \eqref{cond_Euler} can be written in the form $\phi(x,\delta)=\delta^\a\, g(x)+\delta^\b$, for some $\a,\b>0$ and $g:\R^N\to\R$ such that
\begin{equation}\label{M1}
    \MM^\delta_1 g(x) \leq \e\, g(x)+c\ ,
\end{equation}
for some $\e=\e_{\delta}\in(0,1)$ and $c=c_{\delta}>0$ (both constants possibly depending on $\delta$). Then, condition \eqref{cond_bound_Phi} is satisfied with
\begin{equation*}
    \Phi(x,\delta)\,=\, \delta^\a\,\,g(x)\,+\, \frac{c_\delta\, \delta^\a}{1-\e_\delta}\,+\, \delta^\beta\ .
\end{equation*}

\end{lemma}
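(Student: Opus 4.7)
The plan is to convert the one-step drift-type inequality \eqref{M1} into an iterated (geometric) bound by exploiting the discrete semigroup property of $\MM^\delta_n$, and then combine this with the affine structure $\phi(x,\delta)=\delta^\a g(x)+\delta^\b$ to read off the claimed $\Phi$.

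First I would record the two essentially structural properties of the discrete Markov semigroup that the argument needs. By the tower property applied at the first time step $t_1=\delta$, one has the semigroup identity $\MM^\delta_n = \MM^\delta_1\circ \MM^\delta_{n-1}$ for every $n\ge1$, and each operator $\MM^\delta_k$ is \emph{positive} (i.e.\ monotone: $h_1\leq h_2$ pointwise implies $\MM^\delta_k h_1\leq \MM^\delta_k h_2$) and satisfies $\MM^\delta_k \1 = \1$. Applying $\MM^\delta_{n-1}$ to both sides of the hypothesis \eqref{M1} and using these two properties gives the recursion
\begin{equation*}
    \MM^\delta_n g(x) \,=\, \MM^\delta_{n-1}\bigl(\MM^\delta_1 g\bigr)(x) \,\leq\, \e\,\MM^\delta_{n-1} g(x) + c\,,
\end{equation*}
valid for every $n\ge 1$ and $x\in\R^N$.

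A straightforward induction on $n$, starting from $\MM^\delta_0 g=g$, then yields
\begin{equation*}
    \MM^\delta_n g(x)\,\leq\, \e^n g(x) + c\,\frac{1-\e^n}{1-\e}\,,
\end{equation*}
and, since $\e\in(0,1)$ and since $g\ge 0$ in the applications of interest (the function $g$ will be a moment-type function of the form $|x|^q$, so that $\phi$ is genuinely positive as required in Assumption \ref{ass_weak}), we can take supremum over $n\in\N$ to obtain
\begin{equation*}
    \sup_{n\in\N} \MM^\delta_n g(x)\,\leq\, g(x) + \frac{c}{1-\e}\,.
\end{equation*}
The possible dependence on $\delta$ is tracked by writing $\e=\e_\delta$ and $c=c_\delta$.

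Finally, using the linearity of $\MM^\delta_n$ together with $\MM^\delta_n \1=\1$, the decomposition $\phi(\cdot,\delta)=\delta^\a g+\delta^\b$ gives
\begin{equation*}
    \MM^\delta_n \phi(\cdot,\delta)(x) \,=\, \delta^\a\,\MM^\delta_n g(x) + \delta^\b \,\leq\, \delta^\a g(x) + \frac{c_\delta\,\delta^\a}{1-\e_\delta} + \delta^\b \,=\, \Phi(x,\delta)\,,
\end{equation*}
uniformly in $n$, which is exactly \eqref{cond_bound_Phi}. There is no real obstacle here beyond writing down the induction cleanly; the content is a standard discrete Foster--Lyapunov-type argument, and assumption \eqref{cond_Euler} is used only to fix the form of $\phi$ and does not actually enter the estimate itself.
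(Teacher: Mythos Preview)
Your proof is correct and is exactly the standard discrete Foster--Lyapunov recursion one expects here; the paper in fact states Lemma~\ref{standard_eg} without proof, so there is nothing to compare against beyond noting that your argument is the intended one. The only small point worth flagging is that the bound $\sup_n \MM^\delta_n g(x)\le g(x)+c/(1-\e)$ genuinely needs $g\ge 0$, which you correctly acknowledge is implicit from the positivity of $\phi$ in Assumption~\ref{ass_weak} and from the moment-type form of $g$ used throughout the paper.
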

The proof of Lemma \ref{standard_eg} is standard, and is achieved by iteratively applying  the inequality \eqref{M1}, so we omit it.

\begin{proof}[Proof of Proposition \ref{prop_weak}]
Since $\E_xf(\mathcal{X}^{\delta}_{t_n})-\E_x f(x_{t_n})=\MM^\delta_n f(x)-\PP_{n\delta} f(x)$, let us consider the basic identity
\begin{align*}
     \MM^\delta_n f(x)-\PP_{n\delta} f(x)=\underbrace{ \MM^\delta_n f(x)-\left(\MM^\delta_m \PP_{(n-m)\delta}f\right)(x)}_{=:A_{m,n}}+\underbrace{ \left( \MM^\delta_m \PP_{(n-m)\delta}f\right)(x) -\PP_{n\delta}f(x) }_{=:B_{m,n}}\ ,
\end{align*}
for any $m\leq n$. The first term is related to the local weak error, whereas the second is essentially a term of the form \eqref{H} and is bounded by using the SES condition. Choosing $m=1$, by \eqref{cond_Euler} and then \eqref{cond_SDE}, we have
\begin{align}
    \left|B_{1,n}\right|\,=\, \left|\E_x\left[ \PP_{(n-1)\delta}f(\mathcal{X}^\delta_{t_1})\,-\, \PP_{(n-1)\delta}f(x_\delta)\right]\right|\,&\leq K_1\left\|\PP_{(n-1)\delta}f\right\|_{\CC^2_b}\cdot \phi(x,\delta)\nonumber\\
    &\leq\,K_1\,K_0\,\|f\|_{\CC^2_b}\cdot\phi(x,\delta) \,e^{-\laa (n-1)\delta}\ .\label{B1n_bound}
\end{align}
Moreover, for $n=2$, $m=1$, by \eqref{cond_Euler} we have 
\begin{align*}
    \left|A_{1,2}\right|\,=\, \left|\E\left[\MM^\delta_1 f(\XX^\delta_{t_1}) -\PP_\delta f(\XX^\delta_{t_1})\,\big|\,\XX^\delta_0=x\right]\right|\,&\leq\, K_1\|f\|_{\CC^2_b}\,\E_x\left[\phi\left(\XX^\delta_{t_1},\,\delta\right)\,\big|\,\XX^\delta_0=x\right]\\
    &=\,K_1\|f\|_{\CC^2_b}\,\MM^\delta_1\phi\left(x,\,\delta\right)\ .
\end{align*}
Thus, for $n=2$, we obtain
\begin{align}
    \left| \MM^\delta_2 f(x)\,-\,\PP_{2\delta} f(x)\right|\,&=\,\left|A_{1,2}+B_{1,2}\right|\nonumber\\
    &\leq K\,\|f\|_{\CC^2_b}\cdot\left(\MM^\delta_1\phi\left(x,\,\delta\right)\,+\, \phi(x,\delta) \,e^{-\laa \delta}\right)\nonumber\\
    &= K\,\|f\|_{\CC^2_b}\sum_{k=0}^1\MM^\delta_{k}\phi(x,\delta)\,e^{-\laa(1-k)\delta}  \ ,\label{basecase}
\end{align}
where $M^\delta_0\phi=\phi$ and $K=K_1(K_0\vee 1)$. Now, by induction on $n\in\N$, $n> 2$, assuming 
\begin{equation}
    \left| \MM^\delta_{n-1} f(x)\,-\,\PP_{(n-1)\delta} f(x)\right|\,
    \leq\,K\,\|f\|_{\CC^2_b}\,\sum_{k=0}^{n-2}\MM^\delta_{k}\phi(x,\delta)\,e^{-\laa(n-2-k)\delta} \ ,\label{inductive_hp}
\end{equation}
for any $x\in\R^N$ (which is satisfied for the base case $n=2$, as shown in \eqref{basecase}), we show that the following holds
\begin{equation}
    \left| \MM^\delta_{n} f(x)\,-\,\PP_{n\delta} f(x)\right|\,
    \leq\,K\,\|f\|_{\CC^2_b}\,\sum_{k=0}^{n-1}\MM^\delta_{k}\phi(x,\delta)\,e^{-\laa(n-1-k)\delta} \ ,\label{induction}
\end{equation}
for any $x\in\R^N$. Indeed, by the inductive assumption \eqref{inductive_hp}, $A_{1,n}$ is bounded by
\begin{align*}
    \left|A_{1,n} \right|\,&=\, \left|\E\left[\MM^\delta_{n-1} f(\XX^\delta_{t_1}) -\PP_{(n-1)\delta} f(\XX^\delta_{t_1})\,\big|\,\XX^\delta_0=x\right]\right|\\
    &\leq\, K\|f\|_{\CC^2_b}\,\E\left[ \sum_{k=0}^{n-2}\MM^\delta_{k}\phi(\XX^\delta_1,\delta)\,e^{-\laa(n-2-k)\delta} \,\bigg|\,\XX^\delta_0=x\right]\\
    &=\, K\|f\|_{\CC^2_b}\sum_{k=1}^{n-1} \MM^\delta_{k}\phi(x,\delta)\,e^{-\laa(n-1-k)\delta} \ ,
\end{align*}
while $B_{1,n}$ is bounded by \eqref{B1n_bound}, thus we obtain \eqref{induction}.

To conclude, it is enough to observe that, under condition \eqref{cond_bound_Phi},
\begin{align*}
    \sum_{k=0}^{n-1}\MM^\delta_{k}\phi(x,\delta)\,e^{-\laa(n-1-k)\delta} \,\leq\, \Phi(x,\delta)\, \sum_{k=0}^{n-1} e^{-\laa\,k\delta}\,\leq\,  \,\frac{ \Phi(x,\delta)}{1-e^{-\laa\,\delta}}\ .
\end{align*}

\end{proof}


\subsection{Sufficient conditions for Strong Exponential Stability (condition (\ref{cond_SDE}))}\label{s:s}
A number of criteria for SES have been presented in \cite{lorenzi2006analytical,crisan2016pointwise,crisan2021uniform, crisan2022poisson} and references therein. Here we provide further criteria, namely Lemma \ref{lemma_expdecay1} and Lemma \ref{lemma_expdecay2}, expressed in a way which is more suited to the use we make of them in this paper. 
The proofs of the criteria below can be found in the appendix, and follow the structure of  \cite[Prop. 4.5]{crisan2022poisson} and \cite[Proposition 7.1.5]{lorenzi2006analytical}.

\begin{example}\label{toy_eg}
To illustrate the assumptions presented in this and the next sections, we consider the following two basic examples:
    \begin{itemize}
        \item for $N=d=1$, $U_0(x)=-x^3 -a\,x$ and $V(x)=b\,\arctan(x)$, for some constants $a,b>0$.
        \item for $N=d=2$, $$U_0(x_1,\,x_2)= \binom{- x_1 - a x_1^3-x_2^2 x_1}{-x_2-bx_2^3-x_1^2 x_2 }\ ,$$
        with $a,b>0$, and $V_1(x_1,\,x_2)=\sigma_1\in\R^2,\;V_2(x_1,\,x_2)=\sigma_2\in\R^2$ constant in $(x_1,\,x_2)\in\R^2$.
    \end{itemize}
\end{example}

\begin{assumption}\label{ass_expdecay}
\begin{subequations}
Assume that there exists a positive function $\lambda:\R^N\to\R$ such that $\lambda(x)>0$ for every $x\in\R^N$, $\inf_{x\in\R^N} \lambda(x) >0$, and
\begin{align}\label{cond_nablaU0}
    v^T\, \nabla U_0(x)\, v
    \,\leq \, -\lambda(x)\, |v|^2\ ,
\end{align}
for every $x,v\in\R^N$. Moreover, assume  
\begin{equation}\label{cond_expdecay_V}
\sup_{i=1,\dots,N}\,\sum_{k=1}^d \,|\partial_i V_k(x)|^2 \,\leq \,\frac{1}{N}\left(\lambda(x)-\gamma\right)\ ,\qquad x\in\R^N\ ,
\end{equation}
for some constant $\gamma>0$ independent of $x\in\R^N$.
\end{subequations}
\end{assumption}

\begin{rmk}\label{eg2}
    We show that Assumption \ref{ass_expdecay} is satisfied by the vector fields of Example \ref{toy_eg}. In the first example, we have $U_0'(x)=-3x^2-a$, hence \eqref{cond_nablaU0} is satisfied with $\lambda(x)=3x^2 +a$, provided $a>0$. Moreover, $V'(x)=\frac{b}{1+x^2}$, thus condition \eqref{cond_expdecay_V} is satisfied for any $0<\gamma\leq 3x^2+a-\frac{b}{1+x^2}$, if $0<b<a$.

    In the second example, we have
    \begin{align*}
        \nabla U_0 (x_1,x_2)\,=\, 
        \begin{pmatrix}
            -1-3a x_1^2-x_2^2 &- 2x_1x_2\\
            -2x_1x_2&-1-3b x_2^2-x_1^2
        \end{pmatrix}
        \ .
    \end{align*}
    We can note that it is a symmetric matrix with eigenvalues given by
    \begin{equation*}
        -\frac{1}{2}\left(2+(3a+1)x_1^2+(3b+1)x_2^2\right)\,\pm\,\frac{1}{2}\sqrt{\left((3a-1)x_1^2-(3b-1)x_2^2\right)^2+16x_1^2 x_2^2}\ .
    \end{equation*}
    If $a,b>0$ are such that
    \begin{align*}
        (1+3ax_1^2+x_2^2)(1+3bx_2^2+x_1^2) \geq 4x_1^2 x_2^2\ ,
    \end{align*}
    we have that the eigenvalues of $\nabla U_0(x_1,x_2)$ are real and negative, for every $(x_1,x_2)\in\R^2$.
    Thus, condition \eqref{cond_nablaU0} is satisfied with
    \begin{equation*}
        \lambda(x_1,x_2) = 1+\frac{1}{2}\left((3a+1)x_1^2+(3b+1)x_2^2\right)\,-\,\frac{1}{2}\sqrt{\left((3a-1)x_1^2-(3b-1)x_2^2\right)^2+16x_1^2 x_2^2}\ .
    \end{equation*}
    Moreover, since $V_1,\,V_2$ are constant, condition \eqref{cond_expdecay_V} is also satisfied with $\gamma=\min_{x\in\R^2} \lambda(x)$, which is strictly positive (for suitable values of $a,b$).
\end{rmk}

\begin{lemma}\label{lemma_expdecay1}
    Under Assumption \ref{ass_expdecay}, there exists a constant $\gamma>0$ such that
    \begin{equation*}
         \|\nabla \PP_t f\|^2_{\infty}\,\leq\,e^{-2\gamma t}\,\|\nabla f\|^2_{\infty}\ ,
    \end{equation*}
    for every $t\geq 0$ and $f\in \CC^1_b(\R^N)$.
\end{lemma}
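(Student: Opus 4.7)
The plan is to use the first-variation representation of $\nabla \PP_t f$. Since the coefficients $U_0, V_1,\dots, V_d$ are smooth, for $f \in \CC_b^1(\R^N)$ one has, for every $x, v \in \R^N$,
\[
\nabla (\PP_t f)(x) \cdot v \,=\, \E\bigl[\nabla f(x_t^{(x)}) \cdot (\eta_t^x v)\bigr]\ ,
\]
where $\eta_t^x := \nabla_x x_t^{(x)}$ is the Jacobian flow associated with \eqref{SDE_ito}. By Cauchy--Schwarz this gives $|\nabla (\PP_t f)(x) \cdot v|^2 \leq \|\nabla f\|_\infty^2 \, \E[|\eta_t^x v|^2]$, so the whole lemma reduces to the pathwise variance bound $\E[|\eta_t^x v|^2] \leq e^{-2\gamma t}|v|^2$ for every $x, v \in \R^N$; taking suprema over $|v|=1$ and over $x \in \R^N$ then yields the claim.

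To obtain this variance bound, I set $\xi_t := \eta_t^x v$ and write down the linearised SDE
\[
d\xi_t \,=\, \nabla U_0(x_t^{(x)})\,\xi_t \, dt \,+\, \sqrt{2} \sum_{k=1}^d \nabla V_k(x_t^{(x)})\,\xi_t \, dB_t^k\ ,\qquad \xi_0 = v\ .
\]
Applying It\^o's formula to $|\xi_t|^2$ and taking expectation to remove the martingale part gives
\[
\frac{d}{dt} \E[|\xi_t|^2] \,=\, 2\, \E\bigl[\xi_t^T \nabla U_0(x_t^{(x)}) \xi_t\bigr] \,+\, 2 \sum_{k=1}^d \E\bigl[|\nabla V_k(x_t^{(x)}) \xi_t|^2\bigr]\ .
\]

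The two summands are exactly what the two parts of Assumption \ref{ass_expdecay} are designed to control. The first is bounded directly by \eqref{cond_nablaU0}, which gives $\xi_t^T \nabla U_0(x_t^{(x)}) \xi_t \leq -\l(x_t^{(x)}) |\xi_t|^2$. For the second, a componentwise Cauchy--Schwarz yields $|\nabla V_k(x) \xi|^2 \leq |\xi|^2 \sum_{i=1}^N |\partial_i V_k(x)|^2$, and summing in $k$ and invoking \eqref{cond_expdecay_V} produces
\[
\sum_{k=1}^d |\nabla V_k(x) \xi|^2 \,\leq\, N\, |\xi|^2 \, \sup_{i=1,\dots,N}\sum_{k=1}^d |\partial_i V_k(x)|^2 \,\leq\, (\l(x) - \gamma)|\xi|^2\ .
\]
Combining the two bounds, the $\l(x_t^{(x)})$-term cancels and what remains is the clean inequality $\frac{d}{dt}\E[|\xi_t|^2] \leq -2\gamma\, \E[|\xi_t|^2]$. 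Gr\"onwall then gives $\E[|\xi_t|^2] \leq e^{-2\gamma t}|v|^2$, which closes the argument.

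The only genuinely delicate point is the justification of the differentiation-under-the-semigroup formula together with the boundedness of $\nabla \PP_t f$ on $\R^N$: derivatives of $U_0$ are typically unbounded (cf.\ Example \ref{toy_eg}), so a direct application of the standard bounded-coefficient theory is not available and one has to localise, using the dissipation built into \eqref{cond_nablaU0} to control moments of $\eta_t^x$ uniformly in the localisation parameter. This is essentially routine and is carried out exactly as in \cite[Prop. 4.5]{crisan2022poisson} and \cite[Proposition 7.1.5]{lorenzi2006analytical}, which the statement of the lemma already signposts; once this is in place, everything else is a two-line It\^o computation.
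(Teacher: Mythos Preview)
Your proof is correct, but it takes a genuinely different route from the paper's. You work at the pathwise level, using the first-variation representation $\nabla(\PP_t f)(x)\cdot v = \E[\nabla f(x_t^{(x)})\cdot \xi_t]$ and then controlling $\E[|\xi_t|^2]$ by It\^o's formula applied to the linearised SDE. The paper instead works entirely at the semigroup level, in the Bakry--\'Emery / $\Gamma$-calculus style: it computes $\partial_s|\nabla \PP_s f|^2 - \LL|\nabla \PP_s f|^2$, which produces a drift term $2(\nabla\PP_s f)^T\nabla U_0(\nabla\PP_s f)$, a nonpositive square term $-2\sum_{k,i}\langle V_k,\nabla\partial_i\PP_s f\rangle^2$, and a cross term $4\sum_{i,k}(\partial_i\PP_s f)\,V_k^T(\nabla^2\PP_s f)\partial_i V_k$ which is handled by Young's inequality against the nonpositive square term; after this the bound reduces to exactly the same inequality you obtain. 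Your approach is cleaner in that the It\^o correction comes out directly as the square $2\sum_k|\nabla V_k(x_t)\xi_t|^2$ with no cross term to deal with; the paper's approach, on the other hand, yields the slightly sharper intermediate pointwise inequality $|\nabla\PP_t f(x)|^2\le e^{-2\gamma t}\,\PP_t(|\nabla f|^2)(x)$ and avoids invoking differentiability of the stochastic flow. The two methods are the classical Bismut versus Bakry--\'Emery duality for gradient estimates, and here they lead to the same constant $\gamma$.
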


See Appendix \ref{appendix_expdecay} for the proof.

\begin{assumption}\label{ass_expdecay2}
\begin{subequations}
Assume $\nabla U_0$ satisfies condition \eqref{cond_nablaU0} and  
\begin{equation}\label{cond_expdecay2_2d}
    \sum_{j,i=1}^N \left|\partial_j \partial_i U_0(x)\right|\,\leq\, \a\left(1+\lambda(x)\right)\ ,\qquad x\in\R^N\ ,
\end{equation}
for some constant $\a>0$ (possibly depending on $N$) and where $\lambda$ is as in \eqref{cond_nablaU0}.
Moreover, assume that there exist constants $\gamma>0$ and $\rho\in(0,\frac{1}{5})$ such that
\begin{equation}\label{cond_expdecay_Vbis}
    \sup_{i=1,\dots,N}\,\sum_{k=1}^d |\partial_i V_k(x)|^2 \,\leq \,\frac{1}{N}\left(\rho\,\lambda(x)-\gamma\right)\ ,
\end{equation}
and
\begin{equation}\label{cond_expdecay_V2}
    \sup_{i,j=1,\dots,N}\,\sum_{k=1}^d | V_k(x)| \,|\partial_i\partial_j V_k(x)| \,\leq\,\rho\,\lambda(x)-\gamma\ .
\end{equation}
\end{subequations}
\end{assumption}

\begin{remark}
    Note that in dimension $N=1$, condition \eqref{cond_expdecay2_2d} is simply
    \begin{equation*}
         \left| U''_0(x)\right|\leq \a\left(1 +|U'_0(x)|\right)\ ,
    \end{equation*}
    and it is satisfied for any polynomial.
    
\end{remark}

\begin{remark}
   Continuing from Remark \ref{eg2}, for the first example in dimension $N=1$, we simply have $U_0''(x)=-6x$. Hence, recalling from remark \ref{eg2} that $\lambda(x)=3x^2+a$, we see that condition \eqref{cond_expdecay2_2d} is satisfied with constant $\a\geq 3/(1+a)$. Moreover, since $V'(x)=\frac{b}{1+x^2}\leq b$, condition \eqref{cond_expdecay_Vbis} is satisfied if there exist constants $\rho\in(0,\frac{1}{5})$ and $\gamma>0$ such that $b^2\leq \rho a-\gamma$, thus condition \eqref{cond_expdecay_Vbis} holds if $a> 5b^2$. Note that in this case also \eqref{cond_expdecay_V2} holds, since $|V(x)V''(x)|=\left|b^2 \arctan(x)\,\frac{2x}{(1+x^2)^2}\right|\leq b^2$.

    Now, we show that Assumption \ref{ass_expdecay2} is satisfied by the second example of Eg. \ref{toy_eg}. Clearly, conditions \eqref{cond_expdecay_Vbis} and \eqref{cond_expdecay_V2} hold since $V_1$ and $V_2$ are constant. Thus, we only need to check condition \eqref{cond_expdecay2_2d}. The second derivatives of $U_0$ are
    \begin{equation*}
        \partial_{1,1} U_0(\underline{x})= -2\,\binom{3a\,x_1}{x_2},\quad \partial_{1,2} U_0(\underline{x})=\partial_{2,1} U_0(\underline{x})= -2\,\binom{x_2}{x_1},\quad \partial_{2,2} U_0(\underline{x})=-2\,\binom{x_1}{3b\,x_2}.
    \end{equation*}
    Thus,
    \begin{equation*}
        \sum_{i,j=1}^2 |\partial_i \partial_j U_0(\underline{x})|\,\leq\, 2\left(3a+3b+4\right)\,|\underline{x}|\ ,
    \end{equation*}
   and comparing this with the function $\lambda(\underline{x})$ from Remark \ref{eg2} we can see that condition \eqref{cond_expdecay2_2d} is satisfied for some constant $\a>0$.
\end{remark}

\begin{lemma}\label{lemma_expdecay2}
Under Assumption \ref{ass_expdecay2}, there exist constants $\sigma,\, \tilde{K}_0>0$ such that
    \begin{equation*}
         \|\nabla^2 \PP_t f\|^2_{\infty}\,\leq\,\tilde{K}_0\,e^{-\sigma t}\,\left(\|\nabla f\|^2_{\infty}+\|\nabla^2 f\|^2_{\infty}\right)\ ,
    \end{equation*}
    for every $t\geq 0$ and $f\in\CC^2_b(\R^N)$. In particular, condition \eqref{cond_SDE} holds.
\end{lemma}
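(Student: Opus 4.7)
The plan is to adapt the probabilistic template of Lemma \ref{lemma_expdecay1} to second-order derivatives of the semigroup by working with the first and second variation processes of \eqref{SDE_ito}. I will set $\eta^{(h)}_t := D_x x_t^{(x)} h$ and $\zeta^{(h,k)}_t := D_x^2 x_t^{(x)}(h,k)$; these satisfy the linear and affine-linear SDEs obtained by formally differentiating \eqref{SDE_ito} once and twice in the initial datum. The standard chain-rule identity then gives
\begin{equation*}
\partial^2_{h,k} (\PP_t f)(x) \,=\, \E\bigl[\langle \nabla^2 f(x_t^{(x)}) \eta^{(h)}_t,\, \eta^{(k)}_t\rangle \,+\, \nabla f(x_t^{(x)}) \cdot \zeta^{(h,k)}_t\bigr],
\end{equation*}
so after taking suprema it suffices to obtain exponential bounds of the form $\E|\eta^{(h)}_t|^2 \leq C\, e^{-\sigma t}|h|^2$ and $\E|\zeta^{(h,k)}_t|^2 \leq C\, e^{-\sigma t}|h|^2|k|^2$.

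For the first variation, It\^o's formula on $|\eta^{(h)}_t|^2$ produces the drift $2\langle \eta,\nabla U_0\,\eta\rangle + 2\sum_k |\nabla V_k\,\eta|^2$, which by \eqref{cond_nablaU0} and \eqref{cond_expdecay_Vbis} is pointwise at most $-2\gamma|\eta|^2 - 2(1-\rho)\lambda(x_t)|\eta|^2$. Dropping the negative $\lambda$-term and applying Gronwall yields $\E|\eta^{(h)}_t|^2\leq e^{-2\gamma t}|h|^2$. Running exactly the same It\^o computation on $|\eta^{(h)}_t|^4$ also works because $\rho<1/5$ implies the cleaner threshold $\rho<1/3$ needed to keep the $\lambda$-coefficient of the drift nonpositive; this produces exponential decay of $\E|\eta_t|^4$ and, crucially, the auxiliary time-integrability bound
\begin{equation*}
\int_0^\infty e^{\mu s}\,\E\bigl[(1+\lambda(x_s))^2\,|\eta_s^{(h)}|^2|\eta_s^{(k)}|^2\bigr]\,ds \,\leq\, C\,|h|^2|k|^2
\end{equation*}
for some small $\mu>0$, obtained by keeping the $\lambda$-term previously dropped and noting it is time-integrable against an exponential weight. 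This auxiliary estimate is what will make the forcing in the $\zeta$-equation tractable.

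The process $\zeta := \zeta^{(h,k)}_t$ satisfies $d\zeta_t = [\nabla U_0(x_t)\zeta_t + F_t]\,dt + \sqrt{2}\sum_k [\nabla V_k(x_t)\zeta_t + G_t^k]\,dB^k_t$ with forcings $F_t = \nabla^2 U_0(x_t)(\eta^{(h)}_t, \eta^{(k)}_t)$ and $G_t^k = \nabla^2 V_k(x_t)(\eta^{(h)}_t, \eta^{(k)}_t)$. I will apply It\^o to $|\zeta_t|^2$, expand the diffusion square via $|a+b|^2 \leq (1+\delta)|a|^2 + (1+\delta^{-1})|b|^2$, and split the drift cross term by Young's inequality, to obtain
\begin{equation*}
\frac{d}{dt}\E|\zeta_t|^2 \,\leq\, -c_1\,\E[\lambda(x_t)|\zeta_t|^2] - c_2\,\E|\zeta_t|^2 + C\,\E\bigl[(1+\lambda(x_t))^2|\eta_t^{(h)}|^2|\eta_t^{(k)}|^2\bigr],
\end{equation*}
where the $U_0$-contribution to the forcing is controlled through \eqref{cond_expdecay2_2d} and the mixed drift--diffusion cross term $\langle \nabla V_k\zeta, \nabla^2 V_k(\eta^{(h)}_t,\eta^{(k)}_t)\rangle$ through \eqref{cond_expdecay_V2}. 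Gronwall combined with the auxiliary bound of the previous paragraph then yields $\E|\zeta^{(h,k)}_t|^2 \leq C\, e^{-\sigma t}|h|^2|k|^2$, and substituting into the chain-rule identity proves the claim and hence also \eqref{cond_SDE}. The main obstacle is this $\zeta$-estimate: three Young splittings (on the drift cross term, the diffusion square, and the resulting mixed term) must coexist so that the coefficient of $\lambda(x_t)|\zeta_t|^2$ stays strictly negative, and the threshold $\rho<1/5$ is exactly what makes this possible. The asymmetric shape of Assumption \ref{ass_expdecay2}, with \eqref{cond_expdecay_Vbis} bounding $|\nabla V_k|^2$ but \eqref{cond_expdecay_V2} bounding $|V_k|\,|\nabla^2 V_k|$ rather than $|\nabla^2 V_k|^2$, is precisely what lets the mixed cross term be absorbed and the remaining forcing be controlled by the fourth-moment estimate for $\eta$.
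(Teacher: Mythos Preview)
Your variation-process route is genuinely different from the paper's, which never introduces $\eta_t$ or $\zeta_t$ but instead computes $(\partial_s-\LL)\bigl(|\nabla\PP_sf|^2+c\,\|\nabla^2\PP_sf\|^2\bigr)$ directly and shows it is bounded by $-\sigma$ times itself; this is a Bakry--\'Emery--type calculation, and the free constants $c,\e$ are tuned so that the six terms produced by commuting two derivatives through $\LL$ combine with the right signs. Both strategies are standard, but the paper's is tailored to the precise form of Assumption~\ref{ass_expdecay2}, and yours runs into a genuine gap against those hypotheses.

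The problem is the diffusion forcing in the $\zeta$-equation. It\^o on $|\zeta_t|^2$ produces $2\sum_k|\nabla V_k(x_t)\zeta_t+G^k_t|^2$ with $G^k_t=\nabla^2V_k(x_t)(\eta^{(h)}_t,\eta^{(k)}_t)$; after expanding you must control both $\sum_k\langle\nabla V_k\zeta,\,G^k\rangle$ and $\sum_k|G^k|^2$, and neither expression contains an undifferentiated $V_k$. Condition~\eqref{cond_expdecay_V2} bounds only the product $|V_k|\,|\partial_i\partial_j V_k|$ and gives no control on $\|\nabla^2V_k\|$ alone or on $\|\nabla V_k\|\,\|\nabla^2V_k\|$; nothing else in Assumption~\ref{ass_expdecay2} does either. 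The factor $|V_k|$ in \eqref{cond_expdecay_V2} is there precisely because the paper's commutator calculation passes two derivatives through $V_k^T(\nabla^2\,\cdot\,)V_k$, leaving one undifferentiated $V_k$ next to one twice-differentiated $V_k$---a pairing that simply never appears in the $\zeta$-SDE. So your claim that \eqref{cond_expdecay_V2} handles the mixed cross term is incorrect, and the closing remark about the ``asymmetric shape'' being ``precisely what lets the mixed cross term be absorbed'' has the direction of fit backwards. A smaller, fixable issue: your auxiliary integral bound carries $(1+\lambda)^2$, but the drift of $|\eta|^4$ only yields a negative $\lambda|\eta|^4$ term, hence time-integrability of $(1+\lambda)|\eta|^4$; this can be repaired by weighting the Young split on $2\langle\zeta,F\rangle$ with $\lambda$, but the $G^k$ issue above cannot be repaired under Assumption~\ref{ass_expdecay2} as stated.
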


See Appendix \ref{appendix_expdecay} for the proof.

\section{Euler-Maruyama Schemes for SDEs with Lipschitz coefficients}\label{section_EM}

{In this Section, we provide UiT estimates of the weak error of the Euler-Maruyama scheme \eqref{EM} for the SDE \eqref{SDE_ito},  under a  globally Lipschitz assumptions on the coefficients (see Assumption \ref{ass_globalLip}).} 

Let $\{X^\delta_{t_n}\}_{n\in \N}$ be the Euler–Maruyama approximation with time-step $\delta$ of the SDE \eqref{SDE_ito}, i.e.
\begin{equation}\label{EM}
    X_{t_{n+1}}^{\delta}=X_{t_{n}}^{\delta}+U_{0}\left(X_{t_{n}}^{\delta}\right) \delta+\sqrt{2} \sum_{k=1}^{d} V_{k}\left(X_{t_{n}}^{\delta}\right) \Delta B_{t_{n}}^{k}, \quad X_{0}^{\delta}=x\ ,
\end{equation}
where $t_n= n\delta$ and $\Delta B^k_{t_n}=B^k_{t_{n+1}}-B^k_{t_n}$, $k\in\{1,\dots,d\}$, with $B^1_t,\dots,B^d_t$ 1-dimensional independent standard Brownian motions. 

Let $\{X^\delta_t\}_{t\geq 0}$ be the continuous-time interpolant of $\{X^\delta_{t_n}\}_{n\in \N}$, i.e.
\begin{equation}\label{interpolant}
    dX^\delta_t\,=\, U_0\left(X^\delta_{t_{n(t)}}\right)\,dt\,+\, \sqrt{2}\,\sum_{k=1}^d V_k\left(X^\delta_{t_{n(t)}}\right)\, dB^k_t\ ,\qquad X^\delta_0\,=\,x\ ,
\end{equation}
with $t_{n(t)}=t_i$ for $t\in[t_i,\,t_{i+1})$. 
By Ito's formula (the way this is applied is spelled out in e.g. ~\cite[Lemma 3.5]{crisan2021uniform}), if $\varphi\in \CC^2(\R^+\times\R^N)$, then 
\begin{align}
    \varphi(t,\,X^\delta_t)\,=\, &\varphi(0,\,X^\delta_0)\,+\, \int_0^t \left(\partial_s \varphi(s,\,X^\delta_s)\,+\,\LL_{\left(X^\delta_{t_{n(s)}}\right)}\varphi(s,\,X^\delta_s)\right)\, ds\nonumber\\
    &\;+\,\sqrt{2}\sum_{k=1}^d\sum_{i=1}^N\int_0^t V_k^i\left(X^\delta_{t_{n(s)}}\right)\,\partial_i \varphi(s,\,X^\delta_s)\,dB_s^k\ ,\label{3.5}
\end{align}
for any $t\geq 0$, where 
\begin{equation}\label{freezedL}
    \left(\mathcal{L}_{(v)} f\right)(x)\,:=\,\sum_{i=1}^{N} U_{0}^{i}(v)\,\partial_{i} f(x)\,+\,\sum_{i, j=1}^{N} V_{k}^{i}(v) V_{k}^{j}(v)\,\partial_{i,j} f(x) \,.
\end{equation}
Recalling that the generator of the SDE \eqref{SDE_ito} is the  second order differential operator $\LL$ defined on a set of suitably smooth functions $f:\R^N \to \R$ as
\begin{align}\label{LL}
    \mathcal{L} f(x):=\sum_{i=1}^{N} U_{0}^{i}(x)\, \partial_{i} f(x)+\sum_{k=1}^d\sum_{i, j=1}^{N} V_{k}^{i}(x) V_{k}^{j}(x) \,\partial_{i}\, \partial_{j} f(x) \, ,
\end{align}
 the operator \eqref{freezedL} can be seen as obtained from \eqref{LL} by `freezing' the value of the coefficients to $v\in \R^N$. In particular, we can write 

\begin{equation}\label{25}
    \mathbb{E}_x\left[ f\left(x_{t}\right)\right]-\mathbb{E}_x\left[ f\left(X_{t}^{\delta}\right)\right]=\mathbb{E}_x\left[ \int_{0}^{t}\Big(\mathcal{L}_{\left(X_{s}^{\delta}\right)}-\mathcal{L}_{\big(X_{t_{n(s)}}^{\delta}\big)}\Big)\left(\PP_{t-s} f\right)\left(X_{s}^{\delta}\right)\, d s\right]\ ,
\end{equation}
for any $f\in\CC_b(\R^N)$, $t\geq 0$ (see Lemma \ref{lemma_P-Q}).

\begin{assumption}\label{ass_globalLip}
The vector fields in \eqref{SDE_ito} satisfy the following:
\begin{itemize}
    \item there exist constants $c_0,\,c_1>0$ such that
\begin{subequations}
    \begin{align}
        \left| U_0(x)-U_0(y)\right|\,&\leq\, c_0\,|x-y|\ ,\label{U0_globalLip}\\
        \sum_{k=1}^d \left| V_k(x)-V_k(y)\right| \,&\leq\, c_1\,|x-y|\ ,\label{Vk_globalLip}
    \end{align}
    for every $x,\,y\in\R^N$;
    \item the vector fields $V_k$, $k=1,\dots,d$, are bounded, namely there exists a constant $K>0$ such that
    \begin{equation}\label{Vk_bounded}
        \sum_{k=1}^d \left| V_k(x)\right|^2\,\leq\, K\ ,
    \end{equation}
    for any $x\in\R^N$;
    
    \item there exist constants $b_0>0$ and $b_1\geq 0$ such that
\begin{equation}\label{cond_Lyap}
    \langle U_0(x),\,x\rangle\, \leq \, -b_0\,|x|^{2}\,+\, b_1 \ .
\end{equation}
\end{subequations}
\end{itemize}
\end{assumption}

\begin{prop}\label{prop_EM}
Let $X^\delta_t$ be the Euler-Maruyama scheme for the SDE \eqref{SDE_ito}, see  \eqref{EM}. Under Assumption \ref{ass_globalLip}, conditions \eqref{cond_Euler} and \eqref{cond_bound_Phi} are satisfied by the Euler-Maruyama scheme $X^\delta_t$.

If in addition Assumption \ref{ass_expdecay2} holds, then condition \eqref{cond_SDE} is satisfied as well, so that there exists a constant $\tilde{K}>0$ such that
\begin{equation}\label{EM_weak}
\sup_{t\geq 0} \,\left| \E_x\left[ f(x_t)\right]-\E_x\left[f\left(X^{\delta}_{t}\right)\right] \right|\,\leq\, \tilde{K}\|f\|_{\CC^2_b}\cdot \left(\delta\,\,|x|\,+\, \delta^{1/2}\right)\ ,
\end{equation}
for any $f\in\CC^2_b(\R^N)$ and $\delta>0$ small enough. That is, the Euler-Maruyama scheme is a UiT approximation of \eqref{SDE_ito}.

\end{prop}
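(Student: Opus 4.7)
The strategy is to verify the three conditions of Assumption \ref{ass_weak} for the Euler--Maruyama scheme and then invoke Proposition \ref{prop_weak}. Condition \eqref{cond_SDE} is exactly the conclusion of Lemma \ref{lemma_expdecay2} under Assumption \ref{ass_expdecay2}, so the real content of the proof lies in establishing \eqref{cond_Euler} and \eqref{cond_bound_Phi} under Assumption \ref{ass_globalLip} alone.

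For the local consistency \eqref{cond_Euler}, I would specialise the identity \eqref{25} to $t=\delta$: since $t_{n(s)}=0$ for $s\in[0,\delta)$, one has
\[
\E_x\bigl[f(x_\delta)-f(X_\delta^\delta)\bigr]=\E_x\!\left[\int_0^\delta \bigl(\LL_{(X_s^\delta)}-\LL_{(x)}\bigr)(\PP_{\delta-s}f)(X_s^\delta)\,ds\right].
\]
Expanding \eqref{freezedL}, each summand factors as a difference of coefficients times a first- or second-order derivative of $\PP_{\delta-s}f$. The Lipschitz bounds \eqref{U0_globalLip}--\eqref{Vk_globalLip} combined with \eqref{Vk_bounded} give a pointwise estimate of the form $\lvert\LL_{(X_s^\delta)}g-\LL_{(x)}g\rvert\le C\,\lvert X_s^\delta-x\rvert\cdot(\|\nabla g\|_\infty+\|\nabla^2 g\|_\infty)$. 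The derivative seminorms $\|\nabla\PP_{\delta-s}f\|_\infty$ and $\|\nabla^2\PP_{\delta-s}f\|_\infty$ are bounded by $C\|f\|_{\CC^2_b}$, uniformly in $s\in[0,\delta]$ and $\delta\in(0,\delta_0]$, as a standard consequence of the Lipschitz coefficients (and is in any case implied by Lemma \ref{lemma_expdecay2}). It therefore remains to control $\E_x\lvert X_s^\delta-x\rvert$: writing $X_s^\delta-x=sU_0(x)+\sqrt{2}\sum_k V_k(x)B_s^k$ and using \eqref{U0_globalLip} and \eqref{Vk_bounded} gives $\E_x\lvert X_s^\delta-x\rvert^2\le C(1+\lvert x\rvert^2)s^2+Cs$. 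Substituting and integrating over $[0,\delta]$ yields \eqref{cond_Euler} with $\phi(x,\delta)\lesssim\delta^{3/2}+\delta^2\lvert x\rvert$.

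For condition \eqref{cond_bound_Phi} I would apply Lemma \ref{standard_eg} with $g(x)=|x|$, $\alpha=2$, $\beta=3/2$. The key step is the one-step contraction $\MM_1^\delta g(x)\le\varepsilon_\delta g(x)+c_\delta$ with $1-\varepsilon_\delta\asymp\delta$ and $c_\delta$ bounded. Computing $\E_x|X_\delta^\delta|^2$ from \eqref{EM}, the cross term equals $2\delta\langle x,U_0(x)\rangle\le -2\delta b_0|x|^2+2\delta b_1$ by the dissipativity \eqref{cond_Lyap}, while the residual contributions $\delta^2|U_0(x)|^2$ and $2\delta\sum_k|V_k(x)|^2$ are controlled using \eqref{U0_globalLip} and \eqref{Vk_bounded}. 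For $\delta$ small enough one obtains $\E_x|X_\delta^\delta|^2\le (1-\delta b_0)|x|^2+C\delta$, whence by Jensen and the subadditivity of $\sqrt{\cdot}$,
\[
\E_x|X_\delta^\delta|\le \sqrt{1-\delta b_0}\,|x|+\sqrt{C\delta}\le \bigl(1-\tfrac{\delta b_0}{2}\bigr)|x|+C\sqrt\delta.
\]
Plugging this into Lemma \ref{standard_eg} gives $\Phi(x,\delta)\lesssim \delta^2|x|+\delta^{3/2}$.

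Having verified all three requirements, Proposition \ref{prop_weak} immediately produces the bound on $\sup_n\lvert\MM_n^\delta f(x)-\PP_{n\delta}f(x)\rvert$; since $1-e^{-\lambda\delta}\sim\lambda\delta$, the ratio $\Phi(x,\delta)/(1-e^{-\lambda\delta})$ collapses to precisely the $\delta|x|+\delta^{1/2}$ appearing in \eqref{EM_weak}. To extend from the grid $t_n=n\delta$ to all $t\ge 0$ one adds the short-interval correction $|\E_x f(X_t^\delta)-\E_x f(X_{t_{n(t)}}^\delta)|\le C\|f\|_{\CC^2_b}\sqrt\delta(1+|x|)$ and its analogue for $x_t$, both of which are direct consequences of the moment estimates already obtained. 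I expect the main technical obstacle to be the careful tracking of the $|x|$-dependence throughout — specifically, the passage $\E|X^\delta|^2\mapsto\E|X^\delta|$ via Jensen, and the choice of $\delta_0$ that simultaneously enforces the Lyapunov contraction and absorbs the non-contractive $\delta^2|U_0(x)|^2$ term. This bookkeeping is what forces the two separate terms $\delta|x|$ and $\delta^{1/2}$ in \eqref{EM_weak}, rather than a single bound of the form $\delta^{1/2}(1+|x|)$.
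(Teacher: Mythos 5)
Your proposal is correct and follows essentially the same route as the paper's proof: verifying \eqref{cond_Euler} via the identity \eqref{25} at $t=\delta$ with the same $\phi(x,\delta)\asymp\delta^2|x|+\delta^{3/2}$, verifying \eqref{cond_bound_Phi} through the one-step second-moment contraction driven by \eqref{cond_Lyap} followed by Lemma \ref{standard_eg} with $g(x)=|x|$, and concluding by Proposition \ref{prop_weak}. Your extra remark on passing from the grid points $t_n$ to all $t\ge 0$ is a detail the paper leaves implicit, but it does not change the argument.
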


\begin{proof}[Proof of Proposition \ref{prop_EM}]
Throughout the proof, unless further specified, $C>0$ denotes a generic positive constant independent of $x$ and $\delta$.

By Proposition \ref{prop_weak}, if we prove that conditions \eqref{cond_SDE}, \eqref{cond_Euler} and \eqref{cond_bound_Phi} hold for the Euler-Maruyama scheme $X^\delta_{t_n}$, then \eqref{EM_weak} follows. The estimate \eqref{cond_SDE} holds under Assumption \ref{ass_expdecay2}, by Lemma \ref{lemma_expdecay2}. Thus, we only need to prove that conditions \eqref{cond_Euler} and \eqref{cond_bound_Phi} are satisfied under Assumption \ref{ass_globalLip}.
More specifically, we show that, under Assumption \ref{ass_globalLip}, the hypotheses in Lemma \ref{standard_eg} hold, i.e.~we  show that condition \eqref{cond_Euler} is satisfied with
\begin{equation}\label{EM_phi}
    \phi(x,\delta)\,:=\, |x|\,\delta^2\,+\, \delta^{3/2} \ ,
\end{equation}
and that condition \eqref{M1} holds for $\delta>0$ small enough, i.e.
\begin{equation}\label{EM_moments}
    \E_x\left[\left|X^\delta_\delta \right|\right]\,\leq\, \e_\delta\, |x| +c_\delta\ ,
\end{equation}
with $\e_\delta:= \left(1 - 2\delta b_0 +2\delta^2 c_0\right)^{1/2}$ and $c_\delta = C\,\delta^{1/2}$. Thus, by Lemma \ref{standard_eg}, condition \eqref{cond_bound_Phi} is satisfied with
\begin{align*}
    \Phi(x,\delta)\,=\,\delta^2\,|x|\,+\, \frac{c_\delta\, \delta^2}{1-\e_\delta}\,+\, \delta^{3/2}\,\leq\,\delta^2\,|x|+C\delta^{3/2}\ ,
\end{align*}
for any $\delta>0$ small enough. Hence, the statement follows by Proposition \ref{prop_weak}. 

So, all we are left to do is to prove that \eqref{EM_phi} and \eqref{EM_moments} hold. We start with proving the first. By \eqref{25}, we have
\begin{align*}
    \E_x\left[f(x_\delta)-f\left(X^\delta_\delta\right)\right]\,=\,\E_x\left[ \int_0^\delta \left(\LL_{(X^\delta_r)}\,-\,\LL_{(x)}\right)\PP_{\delta-r}f(X^\delta_r)\,dr\right]\ .
\end{align*}

Using the definition of the operator $\LL_{(v)}$ \eqref{freezedL} with $v\in\R^N$, we obtain
\begin{align*}
    \E_x&\left[f(x_\delta)-f\left(X^\delta_\delta\right)\right]\\
    &\;=\,\E_x\bigg[ \int_0^\delta \langle U_0(X^\delta_r)-U_0(x),\,\nabla \PP_{\delta-r}f(X^\delta_r)\rangle\nonumber\\
    &\qquad\qquad+\,\sum_{k=1}^d \left(V_k(X^\delta_r)-V_k(x)\right)^T\,\nabla^2\PP_{\delta-r}f(X^\delta_r)\, \left(V_k(X^\delta_r)+V_k(x)\right)\,dr\bigg]\ .
\end{align*}

Therefore, by conditions \eqref{U0_globalLip}-\eqref{Vk_bounded}, we have
\begin{align}
    &\left|\E_x\left[f(x_\delta)-f\left(X^\delta_\delta\right)\right]\right|\nonumber\\
    &\qquad\leq\, C\,\E_x\left[\int_0^\delta \left(\left|\nabla \PP_{\delta-r} f(X^\delta_r)\right|\,+\, \left\|\nabla^2\PP_{\delta-r}f(X^\delta_r)\right\| \right)\,\left|X^\delta_r-x\right|\, dr\right]\ .\label{bound_EM}
\end{align}
By \eqref{cond_SDE}, $\left|\nabla \PP_{t} f(x)\right|+ \left\|\nabla^2\PP_{t}f(x)\right\|\leq K_0\,\|f\|_{\CC^2_b}$. Moreover, by construction of the Euler-Maruyama approximation \eqref{EM},
\begin{equation*}
    \E_x\left[|X_r^\delta-x|\right] \,\leq\, |U_0(x)|\, r\,+\, \sqrt{2}\,\sum_{k=1}^d |V_k(x)|\, r^{1/2}\,\leq\, C\,\left(|x|\,r\,+\, r^{1/2}\right)\ ,
\end{equation*}
for any $r\in[0,\delta]$ with $\delta<1$, since by \eqref{U0_globalLip} we deduce $|U_0(x)|\leq C(|x|+1)$. Therefore, integrating over $r\in[0,\delta]$, we can bound the LHS of \eqref{bound_EM} by
\begin{align*}
    \left|\E_x\left[f(x_\delta)-f\left(X^\delta_\delta\right)\right]\right|\,
    &\leq\,C\,\|f\|_{\CC^2_b}\cdot\E_x\left[\int_0^\delta \left|X^\delta_r-x\right|\, dr\right]\\
    &\leq\, C\,\|f\|_{\CC^2_b}\,\left(|x|\,\delta^2\,+\, \delta^{3/2}\right)\ .
\end{align*}

We are now left with proving that \eqref{EM_moments} holds. Again, by \eqref{EM} and by Assumption \ref{ass_globalLip}, we have  
\begin{align*}
    \E_x\left[ \left|X^\delta_\delta\right|^2 \right]\,&=\, |x|^2 \,+\, 2\delta\,\langle U_0(x),\,x\rangle\,+\, \delta^2\,\left|U_0(x)\right|^2\,+\, 2\delta\sum_{k=1}^d\left|V_k(x)\right|^2\\
    &\leq\, |x|^2 \,+2\delta\,\left(- b_0|x|^2+b_1\right)\,+\,C\delta^2\,\left(|x|^2+ 1\right)\,+\, 2\delta\,K\\
    &\leq\, \left(1 - 2\delta b_0 +C\delta^2 \right)\,|x|^2\,+\, C\delta\ .
\end{align*}
Since $1 - 2\delta b_0 +C\delta^2\in (0,1)$ for any $\delta>0$ small enough, we obtain \eqref{EM_moments}.

\end{proof}

\section{Split-Step and  Implicit Euler Schemes}\label{section_split}

In this section, we provide time-uniform weak error bounds for the split-step Euler scheme \eqref{SS2}-\eqref{SS1} and the implicit Euler scheme \eqref{general_implicit} under one-sided Lipschitz conditions (more precisely, under Assumption \ref{ass_onesidedLip} and Assumption \ref{ass_for_modified_expdecay} below). As illustrated in \cite{higham2002strong}, one key property of split-step and implicit Euler approximations is that, under locally Lipschitz assumptions on the drift of the SDE \eqref{SDE_ito}, they can be interpreted as standard  Euler-Maruyama schemes for a modified SDE (namely \eqref{modifiedSDE}), as reviewed below. Such a modified SDE has globally Lipschitz coefficients so, thanks to this interpretation, we will show that the time uniform estimates for split-step and implicit Euler schemes can be obtained by adapting the results in Section \ref{section_EM} to the modified SDE. However, it is important to stress that the results of Section \ref{section_EM} need to be applied with some care. Indeed, although it is true that the drift of the modified SDE is globally Lipschitz (see property \eqref{U0delta}), the Lipschitz constant is inversely proportional to $\delta$, so it is important to carefully keep track of the dependence on $\delta$ in all the estimates of this section. In particular, we will need to show that the modified SDE \eqref{modifiedSDE} is strongly exponentially stable with constants in \eqref{cond_SDE} independent of $\delta$.

In the remainder of this section, we first recall  the definition of split-step and implicit Euler schemes and their relation to the explicit Euler scheme for the modified SDE \eqref{modifiedSDE}. We  then explain the organisation of this section and the strategy of proof in more detail. 

Let $\{Z^\delta_{t_n}\}_{n\in \N}$ be the split-step Euler approximation with time-step $\delta$ of the SDE \eqref{SDE_ito}, namely
\begin{subequations}
\begin{align}
Z^\delta_{t_{n+1}}&=Z^{\star}_n+\sqrt{2} \sum_{k=1}^{d} V_{k}\left(Z_{n}^{\star}\right) \Delta B_{t_{n}}^{k}\ ,\label{SS2}\\
Z^{\star}_n&=Z^\delta_{t_n}+ U_0\left(Z^{\star}_n\right)\delta\ , \label{SS1}
\end{align}
\end{subequations}
with $Z_{0}^{\delta} =x$, $t_n= n\delta$ and $\Delta B_{t_n}=B_{t_{n+1}}-B_{t_n}$ (see \cite[Section 3.3]{higham2002strong} for further details). 
Similarly, let $\{Y^{\delta}_{t_n}\}_{n\in \N}$ be a standard implicit Euler approximation with time-step $\delta$, i.e.
\begin{align}
    Y_{t_{n+1}}^{\delta}=Y_{t_{n}}^{\delta}+\delta\,U_{0}\left(Y_{t_{n+1}}^{\delta}\right)\, +\,\sqrt{2} \sum_{k=1}^{d} V_{k}\left(Y_{t_{n}}^{\delta}\right) \Delta B_{t_{n}}^{k}&\ ,\label{general_implicit}
\end{align}
with $t_n$ and $\Delta B_{t_n}$ as above (see \cite[Section 5]{higham2002strong} for further details).

\begin{assumption}\label{ass_onesidedLip}
The vector fields $V_k$, $k=1,\dots,d$, satisfy conditions \eqref{Vk_globalLip} and \eqref{Vk_bounded}. The coefficient $U_0$ satisfies condition \eqref{cond_Lyap} and there exist constants $c_0,\,c_2,\,q>0$ such that
\begin{subequations}
    \begin{align}
        \langle U_0(x)-U_0(y),\,x-y\rangle\,&\leq\, c_0\,|x-y|^2\ ,\label{U0_onesidedLip}\\
        |U_0(x)-U_0(y)|^2\,&\leq\, c_2\,\left(1+|x|^{2q} +|y|^{2q}\right) \,|x-y|^2\ ,\label{cond_poly}
    \end{align}
    for every $x,\,y\in\R^N$. 
\end{subequations}
\end{assumption}

\begin{remark}
The vector fields $U_0$ of Example \ref{toy_eg} satisfy all the conditions of Assumption \ref{ass_onesidedLip}.
\end{remark}

\begin{lemma}\label{lemma_Fdelta}
Assume the SDE \eqref{SDE_ito} satisfies condition \eqref{U0_onesidedLip}. Let $\delta\in(0,\,\frac{1}{2\,c_0})$, with $c_0>0$ as in \eqref{U0_onesidedLip}. Then, there exists a unique solution $z^\star$ of the equation
\begin{equation}\label{implicit_eq}
    z^\star \,=\, y+U_0(z^\star)\,\delta\ ,
\end{equation}
for any $y\in\R^N$. Moreover, the function $F^\delta:\R^N\to\R^N$ defined by 
\begin{equation}\label{def_Fdelta}
    F^\delta\,:\, y\mapsto z^\star
\end{equation}
where $z^\star$ is the solution of \eqref{implicit_eq},
is smooth and
\begin{equation}\label{Fdelta}
    \left| F^\delta(y)-F^\delta(x)\right|^2\,\leq\, \frac{1}{1-2\delta\cdot c_0}\,|y-x|^2\ ,
\end{equation}
for all $y,x\in\R^N$.
\end{lemma}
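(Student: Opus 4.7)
The plan is to recast the implicit equation \eqref{implicit_eq} as an inversion problem for the map $G^\delta : \R^N \to \R^N$ defined by $G^\delta(z) := z - \delta\, U_0(z)$; once $G^\delta$ is shown to be a smooth diffeomorphism, $F^\delta := (G^\delta)^{-1}$ is automatically smooth, and the Lipschitz bound \eqref{Fdelta} will then be extracted directly from the defining equation using \eqref{U0_onesidedLip} together with Young's inequality.

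For uniqueness, subtracting two solutions of \eqref{implicit_eq} for the same $y$ and taking the inner product with $z_1^\star - z_2^\star$ gives $(1-\delta c_0)|z_1^\star-z_2^\star|^2 \le 0$ via \eqref{U0_onesidedLip}, which forces equality since $\delta c_0 < 1/2 < 1$. For existence I would invoke the Hadamard--L\'evy global inverse function theorem, which states that a smooth local diffeomorphism of $\R^N$ is a global diffeomorphism if and only if it is proper. Three checks are needed for $G^\delta$: (i) smoothness is inherited from $U_0$; (ii) differentiating \eqref{U0_onesidedLip} yields $\langle \nabla U_0(z)v,\,v\rangle \le c_0 |v|^2$ for every $v\in\R^N$, so $\langle \nabla G^\delta(z)v,\,v\rangle \ge (1-\delta c_0)|v|^2 > 0$, showing $\nabla G^\delta(z)$ has trivial kernel and is invertible at every $z\in\R^N$; (iii) applying \eqref{U0_onesidedLip} with the second argument taken to be $0$ gives $\langle G^\delta(z) - G^\delta(0),\,z\rangle \ge (1-\delta c_0)|z|^2$, from which Cauchy--Schwarz yields $|G^\delta(z)| \ge (1-\delta c_0)|z| - |G^\delta(0)| \to \infty$ as $|z|\to\infty$. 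Hence $G^\delta$ is a smooth global diffeomorphism and $F^\delta$ is smooth.

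Finally, for \eqref{Fdelta}, set $z_i^\star := F^\delta(y_i)$ for $i=1,2$; subtracting the two instances of \eqref{implicit_eq}, taking the inner product with $z_1^\star - z_2^\star$, applying \eqref{U0_onesidedLip}, and using Young's inequality in the form $\langle y_1-y_2,\,z_1^\star-z_2^\star\rangle \le \tfrac12 |y_1-y_2|^2 + \tfrac12 |z_1^\star-z_2^\star|^2$ produces $\tfrac12(1-2\delta c_0)|z_1^\star - z_2^\star|^2 \le \tfrac12 |y_1-y_2|^2$, which is precisely \eqref{Fdelta}; note that the threshold $\delta < 1/(2c_0)$ is imposed exactly here. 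The main obstacle is the existence step, specifically verifying properness and applying Hadamard--L\'evy correctly. A Banach fixed-point approach to $T_y(z) := y + \delta U_0(z)$ would fail on the whole of $\R^N$, because \eqref{U0_onesidedLip} only controls the symmetric part of $\nabla U_0$ and so $T_y$ is not in general a contraction; the monotone/Hadamard route circumvents this limitation.
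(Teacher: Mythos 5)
Your proof is correct. The uniqueness argument and the derivation of \eqref{Fdelta} (subtract the two instances of \eqref{implicit_eq}, pair with $z_1^\star-z_2^\star$, apply \eqref{U0_onesidedLip} and Young's inequality, and observe that $\delta<1/(2c_0)$ is exactly what makes $1-2\delta c_0>0$) are precisely the computations in the reference the paper defers to, namely the first part of the proof of Lemma 3.4 in Higham--Mao--Stuart; the paper itself gives no argument beyond that citation. Where you genuinely diverge is the existence step: the cited source invokes the uniform monotonicity (Minty--Browder type) theorem, which says that a continuous, strongly monotone map of $\R^N$ is surjective, applied to $G^\delta(z)=z-\delta U_0(z)$, whose strong monotonicity $\langle G^\delta(z_1)-G^\delta(z_2),z_1-z_2\rangle\ge(1-\delta c_0)|z_1-z_2|^2$ follows from \eqref{U0_onesidedLip}. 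You instead verify the hypotheses of the Hadamard--L\'evy global inverse function theorem: everywhere-invertible Jacobian (via the infinitesimal form $\langle\nabla U_0(z)v,v\rangle\le c_0|v|^2$ of \eqref{U0_onesidedLip}, legitimate since $U_0$ is smooth) plus properness (via the coercivity estimate $|G^\delta(z)|\ge(1-\delta c_0)|z|-|G^\delta(0)|$). Both routes are standard and correct here; yours has the advantage of delivering smoothness of $F^\delta=(G^\delta)^{-1}$ for free from the inverse function theorem, whereas the monotonicity theorem only gives a (Lipschitz) homeomorphism and smoothness must be added separately, but it requires differentiability of $U_0$, which the monotone-operator route does not. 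Your closing remark that a Banach fixed-point iteration on $T_y(z)=y+\delta U_0(z)$ cannot work globally is also correct, since \eqref{U0_onesidedLip} does not make $U_0$ globally Lipschitz.
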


\begin{proof}
Follows from the first part of the proof of \cite[Lemma 3.4]{higham2002strong}.

\end{proof}

In the same fashion as \cite{higham2002strong}, we define the modified vector fields
\begin{align}\label{def_Udelta_Vdelta}
    U^\delta_0(y):=U_0\left(F^\delta(y)\right)\, ,\qquad V^\delta_k(y):=V_k\left(F^\delta(y)\right)\, ,\quad k=1,\dots,d.
\end{align}
The split-step Euler scheme $Z^\delta_{t_n}$ with step $\delta$ \eqref{SS2}-\eqref{SS1} can be then rewritten as
\begin{equation*}
    Z^{\delta}_{t_{n+1}}= Z^\delta_{t_n}\,+\,U^\delta_0\left(Z^\delta_{t_n}\right)\,\delta\,+\, \sqrt{2}\sum_{k=1}^d V^\delta_k\left(Z^\delta_{t_n}\right)\,\Delta B^k_{t_n}\ .
\end{equation*}
In other words, the split-step scheme can be interpreted as an explicit Euler scheme $\overline{X}^\delta_{t_n}$ with step $\delta$ applied to the modified SDE
\begin{equation}\label{modifiedSDE}
    d \overline{x}_{t}^{\delta}=U_{0}^\delta\left(\overline{x}_{t}^{\delta}\right) d t+\sqrt{2} \sum_{k=1}^{d} V_{k}^\delta\left(\overline{x}_{t}^{\delta}\right) d B_{t}^{k}, 
\end{equation}
with initial state $\overline{x}^\delta_0=x=Z^\delta_0\in\R^N$. That is, if we consider the Euler scheme $\overline{X}^\delta_{t_n}$ defined by
\begin{equation}\label{modified_EM}
\overline{X}_{t_{n+1}}^{\delta}=\overline{X}_{t_{n}}^{\delta}+U^\delta_{0}\left(\overline{X}_{t_{n}}^{\delta}\right) \delta+\sqrt{2} \sum_{k=1}^{d} V^\delta_{k}\left(\overline{X}_{t_{n}}^{\delta}\right) \Delta B_{t_{n}}^{k}\ ,
\end{equation}
with $\overline{X}_{0}^{\delta}=x=Z^\delta_0$, then we have $Z^\delta_{t_n}=\overline{X}^\delta_{t_n}$ for every $n\in\N$.

Similarly, the implicit Euler scheme $\{Y^{\delta}_{t_n}\}_{n\in \N}$ for the SDE \eqref{SDE_ito}, with initial state $Y^\delta_0=x\in\R^N$ and time-step $\delta$, can be written in terms of the Euler-Maruyama scheme $\overline{X}^\delta_{t_n}$ \eqref{modified_EM} for the modified SDE \eqref{modifiedSDE} with initial state $\overline{x}^\delta_0 =\overline{X}_{0}^{\delta}=x-\delta U_0(x) =Y^\delta_0-\delta\,U_0(Y^\delta_0)$. More precisely, we have 
\begin{equation}\label{implicit_as_EM}
Y^{\delta}_{t_n}\,=\, F^{\delta}(\overline{X}^\delta_{t_n})\ .
\end{equation}
The identity \eqref{implicit_as_EM} can be shown by induction on $n\in\N$. Indeed, provided \eqref{implicit_as_EM} holds for $t_n$, we can see that
\begin{align*}
    Y^{\delta}_{t_{n+1}}-\delta \,U_0\left(Y^{\delta}_{t_{n+1}} \right)\,&=\, \overline{X}^\delta_{t_n}\,+\, \delta\, U_0\left(Y^{\delta}_{t_{n}} \right)\,+\, \sqrt{2}\sum_{k=1}^d V_k\left( Y^{\delta}_{t_{n}}\right)\, \Delta B^k_{t_n}\\
    &=\,  \overline{X}^\delta_{t_n}\,+\, \delta\, U_0^{\delta}\left(\overline{X}^\delta_{t_n}\right)\,+\, \sqrt{2}\sum_{k=1}^d V_k^{\delta}\left( \overline{X}^\delta_{t_n}\right)\, \Delta B^k_{t_n}\,=\, \overline{X}^\delta_{t_{n+1}} \ .
\end{align*}

This section is structured as follows. In Subsection \ref{subsection_modified} we present useful properties of the modified SDE (independent of the discretisation scheme at hand). In particular, we show that the solution of \eqref{modifiedSDE} converges weakly, uniformly in time, to the solution of the original SDE \eqref{SDE_ito} as $\delta\to 0$ (see Proposition \ref{prop_SDE_modified}). Moreover, we provide a criterion to ensure strong exponential stability for the modified SDE \eqref{modifiedSDE} with constants independent of $\delta$ (see Lemma \ref{lemma_modified_ExpDecay}). This is important because, if the SES condition \eqref{cond_SDE} holds with $\laa = \laa(\delta)\approx o(\delta^\alpha)$, $\alpha>0$, the UiT weak error bound would be \eqref{weak_conv} with denominator $1-e^{-\laa(\delta)\, \delta}\approx o(\delta^{1+\alpha})$, thus reducing the order of the weak error bound. 

In Subsection \ref{subsection_moments}, we prove the UiT weak convergence of the Euler-Maruyama scheme for the modified SDE \eqref{modifiedSDE} (see Proposition \ref{prop_modified}): the proof is done by using the paradigm of Proposition \ref{prop_EM}, hence  by using the time-uniform moment bounds of the Euler-Maruyama scheme for the modified SDE (Lemma \ref{lemma_alternative}) and the strong exponential stability for the modified SDE (Lemma \ref{lemma_modified_ExpDecay}). 

Finally, in Subsection \ref{subsection_weak}, recalling that we can interpret the split-step and implicit Euler schemes as Euler-Maruyama schemes for the modified SDE, we combine the UiT weak convergence of the Euler-Maruyama scheme for the modified SDE (Proposition \ref{prop_modified}) with the UiT weak convergence of the modified SDE \eqref{modifiedSDE} to the original SDE \eqref{SDE_ito} (Proposition \ref{prop_SDE_modified}) to prove UiT weak convergence for split-step and implicit Euler schemes (see Theorem \ref{thm_split} and Theorem \ref{thm_implicit}).

We denote by $\overline{\PP}_t^{\delta}$ the Markov semigroup associated to the modified SDE \eqref{modifiedSDE}, and by $\overline{\LL}^\delta$ the corresponding generator, that is
\begin{equation*}
    \overline{\LL}^\delta f(x)\,=\, \sum_{i=1}^{N} U_{0}^{\delta,i}(x)\, \partial_{i} f(x)+\sum_{k=1}^d\sum_{i, j=1}^{N} V_{k}^{\delta,i}(x) V_{k}^{\delta,j}(x) \,\partial_{i}\, \partial_{j} f(x) \ ,
\end{equation*}
for suitably smooth functions $f:\R^N\to\R$.

To avoid confusion, we summarise here the main notation:
\begin{itemize}
    \item $x_t,\;\PP_t,\; \LL$ denote respectively the solution of the SDE \eqref{SDE_ito}, the corresponding semigroup and the infinitesimal generator, respectively;
    \item $\overline{x}^\delta_t,\;\overline{\PP}^\delta_t,\; \overline{\LL}^\delta$ denote respectively the solution of the modified SDE \eqref{modifiedSDE}, the corresponding semigroup and the infinitesimal generator, respectively;
    \item $\overline{X}_{t_n}^\delta$ denotes the Euler-Maruyama approximation \eqref{modified_EM} of the modified SDE \eqref{modifiedSDE};
    \item $\overline{X}^\delta_t$, $t\geq 0$, denotes the continuous-time interpolant of $\overline{X}^\delta_{t_n}$, namely
\begin{equation}\label{interpolant_overline}
    d\overline{X}^\delta_t\,=\, U_0^{\delta}\left(\overline{X}^\delta_{t_{n(t)}}\right)\,dt\,+\, \sum_{k=1}^d V_k\left(\overline{X}^\delta_{t_{n(t)}}\right)\, dB^k_t\ ,
\end{equation}
with $t_{n(t)}=t_i$ for $t\in[t_i,\,t_{i+1})$;
    \item $Z^\delta_{t_n}$ is the split-step Euler approximation \eqref{SS2}-\eqref{SS1} with step $\delta>0$ of the SDE \eqref{SDE_ito};
    \item $Y^{\delta}_{t_n}$ is the implicit Euler approximation \eqref{general_implicit} with step $\delta>0$ of the SDE \eqref{SDE_ito}.
\end{itemize}

\subsection{Properties of the modified SDE \eqref{modifiedSDE}}\label{subsection_modified}
Here, we present some properties of the modified SDE \eqref{modifiedSDE} which will be used in later sections, and we show that the solution of \eqref{modifiedSDE} converges weakly to the solution of the original SDE \eqref{SDE_ito}. Moreover, we provide a criterion (see Lemma \ref{lemma_modified_ExpDecay}) to ensure strong exponential stability of the semigroup associated to the modified SDE \eqref{modifiedSDE}, with constants independent of $\delta$.

\begin{lemma}\label{lemma_Udelta}
Under conditions \eqref{U0_onesidedLip}-\eqref{cond_poly} and \eqref{Vk_globalLip}-\eqref{Vk_bounded}, $U^\delta_0$ converges to $U_0$ and $V^\delta_k$ converges to $V_k$ as $\delta\to 0$ uniformly on compact sets, for $k=1,\dots,d$; moreover for $\delta\in(0,\,\frac{1}{2\,c_0})$ the vector fields $U^\delta_0$, $V^\delta_k$, $k=1,\dots,d$, are globally Lipschitz (with Lipschitz constant dependent on $\delta$), where $c_0$ is as in \eqref{U0_onesidedLip}. More precisely, the following bounds hold,
\begin{subequations}
\begin{align}
    \left| U_0^\delta(x)\right|\,&\leq\,\frac{1}{1-\delta\cdot c_0}\, \left| U_0(x)\right| \ , \label{|U0delta|}\\
    \left| U_0^\delta(x)-U_0^\delta(y)\right|\,&\leq\,\frac{1}{\delta}\,\left(1+\frac{1}{\sqrt{1-2\delta\cdot c_0}}\right)\, |x-y| \ , \label{U0delta}\\
    \sum_{k=1}^d\left| V_k^\delta(x)-V_k^\delta(y)\right|\,&\leq \,\frac{c_1}{\sqrt{1-2\delta\cdot c_0}}\,|x-y| \ , \label{Vdelta}\\
    \left| U_0(x)\,-\,U_0^\delta(x)\right|^2\,+\,\sum_{k=1}^d\left| V_k(x)\,-\,V_k^\delta(x)\right|^2\,&\leq\, \bar{c}\, \left( 1+|x|^{2+4q}\right)\, \delta^2\ ,\label{U0d-U0}
\end{align}
\end{subequations}
for any $x,y\in \R^N$.
\end{lemma}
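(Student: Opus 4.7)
\textbf{Proof plan for Lemma \ref{lemma_Udelta}.}

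\emph{Setup and smoothness.} The starting point is the defining identity
\begin{equation*}
F^\delta(x) - x \,=\, \delta\, U_0(F^\delta(x)) \,=\, \delta\, U_0^\delta(x)\ .
\end{equation*}
Smoothness of $F^\delta$ (and hence of $U_0^\delta,V_k^\delta$) follows from the implicit function theorem applied to $\Phi(z,y) := z-y-\delta U_0(z)$: the one-sided Lipschitz condition \eqref{U0_onesidedLip} implies $v^T\nabla U_0(z) v \leq c_0|v|^2$, so $\partial_z\Phi = I-\delta\nabla U_0(z)$ has all eigenvalues bounded away from $0$ for $\delta<1/(2c_0)$.

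\emph{Proof of \eqref{|U0delta|} and \eqref{U0delta}.} To prove \eqref{|U0delta|}, apply \eqref{U0_onesidedLip} to $F^\delta(x)$ and $x$, using $F^\delta(x)-x=\delta U_0(F^\delta(x))$:
\begin{equation*}
\delta \langle U_0(F^\delta(x))-U_0(x),\, U_0(F^\delta(x))\rangle \,\leq\, c_0\delta^2 |U_0(F^\delta(x))|^2\ ,
\end{equation*}
which, after expanding and applying Cauchy--Schwarz on the cross term, yields $(1-c_0\delta)|U_0(F^\delta(x))|\leq |U_0(x)|$. For \eqref{U0delta}, subtract the defining identities at $x$ and $y$ to get $\delta(U_0^\delta(x)-U_0^\delta(y)) = F^\delta(x)-F^\delta(y) - (x-y)$, then use the triangle inequality together with the contraction bound \eqref{Fdelta} from Lemma~\ref{lemma_Fdelta}. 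The bound \eqref{Vdelta} is immediate by composing \eqref{Vk_globalLip} with \eqref{Fdelta}.

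\emph{Proof of \eqref{U0d-U0}, the main technical step.} By the polynomial growth condition \eqref{cond_poly} applied to $U_0(x)-U_0(F^\delta(x))$,
\begin{equation*}
|U_0(x)-U_0^\delta(x)|^2 \,\leq\, c_2\bigl(1+|x|^{2q}+|F^\delta(x)|^{2q}\bigr)\,\delta^2\,|U_0^\delta(x)|^2\ ,
\end{equation*}
so I need (i) a bound $|F^\delta(x)|\leq C(1+|x|)$ and (ii) a bound $|U_0^\delta(x)|^2 \leq C(1+|x|^{2+2q})$. For (i), which is the crux, I plan to use the Lyapunov condition \eqref{cond_Lyap}: taking the inner product of $F^\delta(x)=x+\delta U_0(F^\delta(x))$ with $F^\delta(x)$ and using $\langle U_0(F^\delta(x)),F^\delta(x)\rangle\leq -b_0|F^\delta(x)|^2+b_1$ gives the quadratic inequality $(1+\delta b_0)|F^\delta(x)|^2 \leq |x||F^\delta(x)| + b_1\delta$, from which $|F^\delta(x)|\leq |x| + C\sqrt{\delta}$. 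For (ii), combine \eqref{|U0delta|} with the growth bound $|U_0(x)|\leq |U_0(0)|+\sqrt{c_2}(1+|x|^q)|x|$ obtained by setting $y=0$ in \eqref{cond_poly}. Putting these together produces the bound $|U_0(x)-U_0^\delta(x)|^2\leq C\delta^2(1+|x|^{2+4q})$, since $2q+(2+2q)=2+4q$. The analogous bound for $\sum_k|V_k(x)-V_k^\delta(x)|^2$ is simpler: apply \eqref{Vk_globalLip} to get $\leq c_1^2|x-F^\delta(x)|^2=c_1^2\delta^2|U_0^\delta(x)|^2\leq C\delta^2(1+|x|^{2+2q})$.

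\emph{Uniform convergence on compacts.} This is a direct consequence of \eqref{U0d-U0}: on any compact $K\subset\R^N$, the RHS is uniformly of order $\delta^2$ and hence tends to zero. The main obstacle is the quadratic/Lyapunov bookkeeping for $|F^\delta(x)|$: without the Lyapunov control \eqref{cond_Lyap}, a naive bootstrap from $|F^\delta(x)|\leq |x|+\delta |U_0(F^\delta(x))|$ only yields $|F^\delta(x)|\lesssim 1+|x|^{1+q}$, which would inflate the exponent in \eqref{U0d-U0} from $2+4q$ to $2+4q+2q^2$. Using \eqref{cond_Lyap} is what keeps the exponent linear in $q$.
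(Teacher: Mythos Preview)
Your proof is correct in structure and matches the paper's argument for \eqref{|U0delta|}, \eqref{U0delta}, \eqref{Vdelta}, and most of \eqref{U0d-U0}. The one substantive difference is in how you obtain the linear bound $|F^\delta(x)|\leq C(1+|x|)$: you invoke the Lyapunov condition \eqref{cond_Lyap}, whereas the paper simply uses the Lipschitz estimate \eqref{Fdelta} from Lemma~\ref{lemma_Fdelta}, which gives $|F^\delta(x)|\leq |F^\delta(0)|+(1-2\delta c_0)^{-1/2}|x|$, with $|F^\delta(0)|$ bounded (e.g.\ from $(1-\delta c_0)|F^\delta(0)|=\,(1-\delta c_0)\delta|U_0^\delta(0)|\leq \delta|U_0(0)|$ via \eqref{|U0delta|}).

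This matters because \eqref{cond_Lyap} is \emph{not} among the hypotheses listed in the lemma (only \eqref{U0_onesidedLip}--\eqref{cond_poly} and \eqref{Vk_globalLip}--\eqref{Vk_bounded} are assumed). Your closing remark --- that without Lyapunov control one would be stuck with $|F^\delta(x)|\lesssim 1+|x|^{1+q}$ and an inflated exponent --- is therefore not quite right: the Lipschitz bound \eqref{Fdelta}, which follows from the one-sided Lipschitz condition alone, already delivers the linear growth you need. Replacing your Lyapunov argument with this observation makes the proof self-contained under the stated hypotheses and is in fact simpler.
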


\begin{proof}
For the proof of the bound \eqref{|U0delta|}, see the second part of the proof of \cite[Lemma 3.4]{higham2002strong}. To prove the bound \eqref{U0delta}, note that, by the basic identity \begin{equation}\label{Udelta}
    \delta\cdot U_0^\delta(x)\,=\, F^\delta(x)\,-\, x\ ,
\end{equation}
we have
\begin{equation*}
    \left| U_0^\delta(x)-U_0^\delta(y)\right|\,=\, \frac{1}{\delta}\,\left|F^\delta(x)-F^\delta(y)-x+y\right|\,\leq\,\frac{1}{\delta}\,\left|F^\delta(x)-F^\delta(y)\right|+\frac{1}{\delta}\left|x-y\right| \ .
\end{equation*}
Therefore, by \eqref{Fdelta}, we obtain the bound \eqref{U0delta}. Similarly, to prove the bound \eqref{Vdelta}, it is enough to observe the following
\begin{equation*}
    \sum_{k=1}^d\left| V_k^\delta(x)-V_k^\delta(y)\right|\,=\,\sum_{k=1}^d\left| V_k\left(F^\delta(x)\right)-V_k\left(F^\delta(y)\right)\right|\,\leq\, c_1\left|F^\delta(x)-F^\delta(y)\right|\ ,
\end{equation*}
since $V_k$, $k=1,\dots,d$, are globally Lipschitz \eqref{Vk_globalLip}. Hence, we conclude by \eqref{Fdelta}.

Finally, by \eqref{|U0delta|} and then condition \eqref{cond_poly}, there exists a constant $\bar{c}>0$ such that
\begin{equation*}
    \left|x-F^\delta(x)\right|^2\,=\,\delta^2\,\left|U^\delta_0(x)\right|^2\,\leq\, c\,\delta^2\,\left|U_0(x)\right|^2\,\leq\, \bar{c}\,\delta^2\,\left(1+|x|^{2+2q}\right)\ .
\end{equation*}
Moreover, by \eqref{Fdelta}, we have $|F^\delta(x)|\leq \bar{c}\,\left(1+|x|\right)$, for every $x\in\R^N$, for some constant $\bar{c}>0$. Hence, we obtain
\begin{align*}
    \left| U_0(x)\,-\,U_0^\delta(x)\right|^2\,&=\, \left| U_0(x)\,-\,U_0\left(F^\delta(x)\right)\right|^2\\
    &\leq\, c\, \left(1+|x|^{2q}+\left| F^{\delta}(x)\right|^{2q}\right)\,\left| x-F^{\delta}(x)\right|^2\\
    & \leq\, {c}\, \left(1+|x|^{2q}\right)\, \left( 1+|x|^{2+2q}\right)\, \delta^2\\
    &\leq\, \bar{c}\, \left( 1+|x|^{2+4q}\right)\, \delta^2\ .
\end{align*}
In the same fashion, by \eqref{Vk_globalLip}, 
\begin{align*}
    \sum_{k=1}^d\left| V_k(x)\,-\,V_k^\delta(x)\right|^2\,\leq\,c_1\,\left| x- F^\delta(x) \right|^2\,\leq\, \bar{c}\,\left(1+|x|^{2+2q}\right)\,\delta^2 \ .
\end{align*}
This proves the bound \eqref{U0d-U0}.

\end{proof}

\begin{assumption}\label{ass_for_modified_expdecay}
\begin{subequations}
Assume that there exist a constant $\beta\geq 1$ and a positive function $\lambda:\R^N\to\R$ such that $\lambda(x)>0$ for every $x\in\R^N$, $\inf_{x\in\R^N} \lambda(x) =:\lambda^\star >0$, and
\begin{align}\label{cond_nablaU0_formodified}
    -\beta\,\lambda(x)\, |v|^2\,\leq\, v^T\, \nabla U_0(x)\, v
    \,\leq \, -\lambda(x)\, |v|^2\ ,
\end{align}
for every $x,v\in\R^N$. Moreover, assume the following  holds: 

$\bullet$ there exists a constant $\alpha>0$ such that 
\begin{equation}\label{cond_expdecay2_2d_formodified}
    \sum_{j,i=1}^N \left|\partial_j \partial_i U_0(x)\right|\,\leq\, \a\,\lambda(x)\ ,\qquad x\in\R^N \,;
\end{equation}

 $\bullet$  there exist a constant $\rho\in(0,\frac{1}{5})$ such that
\begin{equation}\label{cond_expdecay_Vbis_formodified}
    \sum_{k=1}^d \|\nabla V_k(x)\|^2 \,\leq \,\frac{\rho}{N\,\beta^2}\,\lambda(x)\ ,\qquad x\in\R^N\ ,
\end{equation}
with $\beta\geq 1$ given by \eqref{cond_nablaU0_formodified}, and a constant $\hat K>0$ such that $\sup_k \left|V_k(x)\right|\leq \hat K$ and
\begin{equation}\label{cond_expdecay_V2_formodified}
    \sum_{k=1}^d \left(\left(\sum_{m,m'} \left| \partial_m \partial_{m'} V_k(x) \right|^2\right)^{1/2}+\a\,\left\|\nabla V_k(x) \right\|\right) \,\leq\,\frac{\rho}{\hat K\,\beta^2}\,\lambda(x)\ ,\qquad x\in\R^N\ ,
\end{equation}
\end{subequations}
where $\alpha>0$ is given by \eqref{cond_expdecay2_2d_formodified}.
\end{assumption}

\begin{example}
    For $N=d=1$, the drift $U_0(x)=-x^3-x$ satisfies conditions \eqref{cond_nablaU0_formodified} and \eqref{cond_expdecay2_2d_formodified} with $\lambda(x)=3x^2+1$ and $\alpha=2$ (and, obviously, $\beta=1$ since $d=1$).
\end{example}

\begin{lemma}[Strong exponential stability for the modified SDE]\label{lemma_modified_ExpDecay}
Under Assumption \ref{ass_for_modified_expdecay}, the bound \eqref{cond_SDE} is satisfied for the modified SDE \eqref{modifiedSDE} uniformly in $\delta$. Namely, there exist constants $\sigma,\, \hat{K}_0>0$ (independent of $\delta$) such that
    \begin{equation*}
         \|\bar{\PP}^\delta_t f\|_{\CC^2_b}\,\leq\,\hat{K}_0\,e^{-\sigma t}\, \| f\|_{\CC^2_b}\ ,
    \end{equation*}
    for every $\delta>0$ small enough, $t\geq 0$ and $f\in\CC^2_b(\R^N)$.
\end{lemma}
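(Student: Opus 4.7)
My strategy is to apply Lemma \ref{lemma_expdecay2} directly to the modified SDE \eqref{modifiedSDE}, and the whole task reduces to verifying that the modified coefficients $U_0^\delta,\,V_k^\delta$ (given by \eqref{def_Udelta_Vdelta}) satisfy Assumption \ref{ass_expdecay2} with constants that can be chosen independently of $\delta$ for $\delta>0$ small enough. Assumption \ref{ass_for_modified_expdecay} is formulated precisely so as to leave $O(\delta)$ room for a perturbation: compared to Assumption \ref{ass_expdecay2}, it asks for the two-sided bound \eqref{cond_nablaU0_formodified}, for $\nabla^2 U_0$ to be controlled by $\lambda(x)$ (not $1+\lambda(x)$), and for the derivatives of the $V_k$'s to be bounded by $\lambda$ carrying the safety factors $1/\beta^2$ and $1/(\hat K\beta^2)$. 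This slack is what will absorb the $\delta$-dependence introduced by $F^\delta$.

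The first step is to obtain quantitative bounds on $\nabla F^\delta$ and $\nabla^2 F^\delta$. Differentiating the defining relation $F^\delta(x)=x+\delta\,U_0(F^\delta(x))$ of Lemma \ref{lemma_Fdelta} gives
\begin{equation*}
    \nabla F^\delta(x)\,=\,\bigl(I-\delta\,\nabla U_0(F^\delta(x))\bigr)^{-1}.
\end{equation*}
The upper bound in \eqref{cond_nablaU0_formodified} implies $v^T(I-\delta\,\nabla U_0(y))v\geq (1+\delta\lambda(y))|v|^2$, from which $|\nabla F^\delta(x)\,v|\leq (1+\delta\lambda^\star)^{-1/2}\,|v|$ and in particular $\|\nabla F^\delta\|_\infty\leq 1$, with $\nabla F^\delta(x)\to I$ as $\delta\to 0$. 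A further differentiation expresses $\nabla^2 F^\delta$ in terms of $\nabla F^\delta$ and $\nabla^2 U_0\circ F^\delta$; invoking \eqref{cond_expdecay2_2d_formodified} yields $|\nabla^2 F^\delta(x)|\leq C\,\delta\,\lambda(F^\delta(x))$ uniformly in $\delta$ small.

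Next comes the crucial one-sided bound on $\nabla U_0^\delta$. Setting $A:=\nabla U_0(F^\delta(x))$, we have $\nabla U_0^\delta(x)=A\,(I-\delta A)^{-1}$. Writing $w:=(I-\delta A)^{-1}v$, so that $v=(I-\delta A)w$, direct computation gives
\begin{equation*}
    v^T\,\nabla U_0^\delta(x)\,v\,=\,w^T A\,w-\delta\,|A\,w|^2\,\leq\,-\lambda(F^\delta(x))\,|w|^2,
\end{equation*}
by the upper inequality in \eqref{cond_nablaU0_formodified}. Expanding $|(I-\delta A)w|^2$ and using the same inequality gives $|v|^2\geq (1+2\delta\,\lambda(F^\delta(x)))|w|^2$, whence
\begin{equation*}
    v^T\,\nabla U_0^\delta(x)\,v\,\leq\,-\widetilde{\lambda}^\delta(x)\,|v|^2,\qquad \widetilde{\lambda}^\delta(x)\,:=\,\frac{\lambda(F^\delta(x))}{1+2\delta\,\lambda(F^\delta(x))}\, .
\end{equation*}
Since $s\mapsto s/(1+2\delta s)$ is increasing in $s>0$, $\inf_x\widetilde{\lambda}^\delta(x)\geq \lambda^\star/(1+2\delta\lambda^\star)$, so condition \eqref{cond_nablaU0} holds for $U_0^\delta$ with a rate bounded away from zero uniformly in $\delta$ small.

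The remaining conditions of Assumption \ref{ass_expdecay2} for $(U_0^\delta,V_k^\delta)$ are then verified by chain-rule differentiation of $U_0\circ F^\delta$ and $V_k\circ F^\delta$: each resulting term is a product of $\nabla U_0,\nabla^2 U_0,\nabla V_k,\nabla^2 V_k$ evaluated at $F^\delta(x)$ and factors of $\nabla F^\delta,\nabla^2 F^\delta$. Inserting the bounds $\|\nabla F^\delta\|\leq 1$ and $|\nabla^2 F^\delta|\leq C\delta\,\lambda(F^\delta)$ and applying \eqref{cond_expdecay2_2d_formodified}, \eqref{cond_expdecay_Vbis_formodified}, \eqref{cond_expdecay_V2_formodified} at $F^\delta(x)$, the right-hand sides become the desired bounds in $\widetilde{\lambda}^\delta(x)$ up to a factor $1+O(\delta)$. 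The anticipated main obstacle is the second-derivative bookkeeping for the diffusion: condition \eqref{cond_expdecay_V2} must survive the perturbation with a coefficient still strictly below $1/5$, and the precise role of $\rho<1/5$, $1/(\hat K\beta^2)$ in Assumption \ref{ass_for_modified_expdecay} is to guarantee exactly that. Once Assumption \ref{ass_expdecay2} has been established for $(U_0^\delta,V_k^\delta)$ uniformly in $\delta$, Lemma \ref{lemma_expdecay2} applied to \eqref{modifiedSDE} delivers the desired SES for $\overline{\PP}^\delta_t$ with constants $K_0,\,\sigma$ independent of $\delta$.
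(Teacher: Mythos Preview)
Your overall strategy---verify an analogue of Assumption \ref{ass_expdecay2} for $(U_0^\delta,V_k^\delta)$ and then invoke the argument of Lemma \ref{lemma_expdecay2}---is precisely the paper's route; the paper carries it out in four steps, first establishing bounds on $\nabla U_0^\delta$, $\nabla^2 U_0^\delta$, $\nabla V_k^\delta$, $\nabla^2 V_k^\delta$ and then \emph{re-running} the proof of Lemma \ref{lemma_expdecay2} with those bounds. However, your proposal has two genuine gaps.

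\medskip
\textbf{1. The one-sided bound on $\nabla U_0^\delta$ is derived with the inequality in the wrong direction.} With $A=\nabla U_0(y)$ and $w=(I-\delta A)^{-1}v$ you correctly obtain $v^T\nabla U_0^\delta(x)\,v\leq -\lambda(y)\,|w|^2$. To conclude $v^T\nabla U_0^\delta(x)\,v\leq -\widetilde\lambda^\delta(x)\,|v|^2$ you need a \emph{lower} bound $|w|^2\geq c\,|v|^2$. But expanding $|v|^2=|(I-\delta A)w|^2$ and using $w^TAw\leq -\lambda(y)|w|^2$ gives $|v|^2\geq(1+2\delta\lambda(y))|w|^2$, i.e.\ an \emph{upper} bound on $|w|^2$; from $-\lambda(y)|w|^2\geq -\widetilde\lambda^\delta(x)|v|^2$ you cannot chain to the desired inequality. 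The missing ingredient is the \emph{lower} bound in \eqref{cond_nablaU0_formodified}, which yields $\|I-\delta A\|_2\leq 1+\delta\beta\lambda(y)$ and hence $|w|\geq |v|/(1+\delta\beta\lambda(y))$; this is exactly why $\beta$ appears in Assumption \ref{ass_for_modified_expdecay}, and why the paper's $\widetilde\lambda^\delta$ carries a factor $\beta^{-2}$. Your argument never uses $\beta$, which signals the error.

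\medskip
\textbf{2. The ``up to $1+O(\delta)$'' perturbation picture is not valid, and black-boxing Lemma \ref{lemma_expdecay2} is not enough.} Since $\lambda$ is typically unbounded (polynomial in $|x|$ in the running examples), $\delta\lambda(F^\delta(x))$ is \emph{not} uniformly small, so the bounds on $\nabla^2 U_0^\delta$, $\nabla V_k^\delta$, $\nabla^2 V_k^\delta$ are not of the form ``Assumption \ref{ass_expdecay2} with constants $(1+O(\delta))$''. They are genuinely $\delta$-dependent, taking the shape $\lambda(y)/(1+\delta\lambda(y))^k$; the point is that these specific powers of $(1+\delta\lambda(y))^{-1}$ combine in the differential inequality of Lemma \ref{lemma_expdecay2} so that the final decay rate depends only on $\rho$, $\alpha$, $\beta$ and $\lambda^\star=\inf_x\lambda(x)$. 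The paper makes this explicit by redoing the computation rather than citing Lemma \ref{lemma_expdecay2} as a black box---whose statement, as written, gives constants depending on the full data of Assumption \ref{ass_expdecay2}, hence potentially on $\delta$ through $\widetilde\lambda^\delta$.
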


See Appendix \ref{appendix_expdecay} for the proof.

\begin{prop}\label{prop_SDE_modified}
Let $x_t$ be the solution of the SDE \eqref{SDE_ito} and let $\overline{x}_{t}^\delta$ be the solution of the modified SDE \eqref{modifiedSDE}.
If Assumption \ref{ass_onesidedLip} and Assumption \ref{ass_for_modified_expdecay} hold, then there exists a constant $K>0$ (independent of $t$, $\delta$) such that
\begin{equation*}
    \sup_{t\geq 0} \,\left| \E_x\left[ f\left(\overline{x}_{t}^\delta\right)\right]-\E_x\left[f\left(x_{t}\right)\right] \right|\,\leq\, \delta\,K\,\|f\|_{\CC^2_b}\,\left(1+|x|^{2+2q}\right)\ ,
\end{equation*}
for all $f\in \CC_b^2(\R^N)$ and $\delta >0$ small enough, with $q>0$ as in condition \eqref{cond_poly}.

\end{prop}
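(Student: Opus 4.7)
The strategy is to compare the semigroups $\mathcal{P}_t$ and $\overline{\mathcal{P}}_t^\delta$ via a standard Itô-based integral representation of their difference, then bound the integrand using the coefficient closeness estimate \eqref{U0d-U0}, the SES decay of $\mathcal{P}_t$, and uniform-in-$\delta$ moment bounds for $\overline{x}_t^\delta$.

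First, I fix $t>0$, $f\in\mathcal{C}_b^2(\R^N)$, and set $\varphi(s,y):=(\mathcal{P}_{t-s}f)(y)$, so $\partial_s\varphi=-\mathcal{L}\varphi$. Applying Itô's formula to $s\mapsto\varphi(s,\overline{x}_s^\delta)$ on $[0,t]$ (using that SES and smoothness of the coefficients ensure $\varphi\in\mathcal{C}^{1,2}$ with the required growth) yields the identity analogous to \eqref{25}:
\begin{equation*}
\E_x[f(x_t)]-\E_x[f(\overline{x}_t^\delta)]\,=\,\int_0^t \E_x\!\left[(\mathcal{L}-\overline{\mathcal{L}}^\delta)(\mathcal{P}_{t-s}f)(\overline{x}_s^\delta)\right]ds.
\end{equation*}
Next, for any $g\in\mathcal{C}_b^2$ and $y\in\R^N$, I bound the generator difference pointwise by
\begin{equation*}
|(\mathcal{L}-\overline{\mathcal{L}}^\delta)g(y)|\,\leq\, |U_0(y)-U_0^\delta(y)|\,|\nabla g(y)|\,+\,\sum_{k=1}^d \left|V_k(y)V_k(y)^T-V_k^\delta(y)V_k^\delta(y)^T\right|\,\|\nabla^2 g(y)\|.
\end{equation*}
Writing $V_kV_k^T-V_k^\delta V_k^{\delta,T}=(V_k-V_k^\delta)V_k^T+V_k^\delta(V_k-V_k^\delta)^T$, using boundedness of both $V_k$ and $V_k^\delta$ by \eqref{Vk_bounded}, and applying \eqref{U0d-U0}, one obtains
\begin{equation*}
|(\mathcal{L}-\overline{\mathcal{L}}^\delta)g(y)|\,\leq\, C\,\delta\,(1+|y|^{1+2q})\,\|g\|_{\mathcal{C}_b^2}.
\end{equation*}

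The third ingredient is SES for the original semigroup $\mathcal{P}_t$: since $\inf_x\lambda(x)=\lambda^\star>0$, Assumption \ref{ass_for_modified_expdecay} implies Assumption \ref{ass_expdecay2}, so Lemma \ref{lemma_expdecay2} gives $\|\mathcal{P}_{t-s}f\|_{\mathcal{C}_b^2}\leq K_0 e^{-\sigma(t-s)}\|f\|_{\mathcal{C}_b^2}$. The fourth, and key technical ingredient, is a uniform-in-$\delta$ moment bound for $\overline{x}_s^\delta$: I need to show that $U_0^\delta$ inherits the dissipativity \eqref{cond_Lyap} of $U_0$ with constants that do not degenerate as $\delta\to 0$. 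Using the defining identity $y=F^\delta(y)-\delta U_0(F^\delta(y))$, I compute
\begin{equation*}
|y|^2\,\geq\,(1+2\delta b_0)|F^\delta(y)|^2-2\delta b_1,\qquad \langle U_0^\delta(y),y\rangle\,\leq\,\frac{-b_0|y|^2+b_1}{1+2\delta b_0},
\end{equation*}
yielding $\langle U_0^\delta(y),y\rangle\leq -\tilde b_0|y|^2+\tilde b_1$ for $\delta$ small, with $\tilde b_0,\tilde b_1>0$ independent of $\delta$. Combined with the uniform bound $\sum_k|V_k^\delta|^2\leq K$, standard Itô/Gronwall estimates on $|\overline{x}_s^\delta|^{2p}$ yield $\sup_{s\geq 0}\E_x[|\overline{x}_s^\delta|^{2p}]\leq C_p(1+|x|^{2p})$ for all $p\in\N$; taking $2p$ to be the smallest even integer $\geq 2+2q$ and applying Jensen gives
\begin{equation*}
\sup_{s\geq 0}\,\E_x[1+|\overline{x}_s^\delta|^{1+2q}]\,\leq\,C\,(1+|x|^{2+2q}).
\end{equation*}

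Combining these four ingredients yields
\begin{equation*}
|\E_x[f(x_t)]-\E_x[f(\overline{x}_t^\delta)]|\,\leq\, C\delta K_0\|f\|_{\mathcal{C}_b^2}(1+|x|^{2+2q})\int_0^t e^{-\sigma(t-s)}\,ds\,\leq\,\frac{CK_0}{\sigma}\,\delta\,\|f\|_{\mathcal{C}_b^2}\,(1+|x|^{2+2q}),
\end{equation*}
uniformly in $t\geq 0$, proving the claim. The main obstacle I expect is the uniform-in-$\delta$ moment bound in Step four: one must carefully track how the dissipativity constants of $U_0$ transfer to $U_0^\delta$ without blowing up, which in turn rests on the a priori $\delta$-independent estimate $|F^\delta(y)|^2\leq(|y|^2+2\delta b_1)/(1+2\delta b_0)$ derived from the implicit equation. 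The remaining steps combine standard tools in a routine way.
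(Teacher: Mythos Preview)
Your proof is correct but uses the \emph{dual} decomposition to the paper's. The paper applies Lemma~\ref{lemma_P-Q} with the roles swapped: it writes
\[
\E_x\big[f(\overline{x}_t^\delta)\big]-\E_x\big[f(x_t)\big]
=\int_0^t \E_x\!\left[(\overline{\LL}^\delta-\LL)\,\overline{\PP}^\delta_{t-s}f(x_s)\right]ds,
\]
so the semigroup appearing is the \emph{modified} one $\overline{\PP}^\delta$ and the process whose moments are needed is the \emph{original} one $x_s$. Consequently the paper invokes the $\delta$-uniform SES of $\overline{\PP}^\delta$ (Lemma~\ref{lemma_modified_ExpDecay}) together with the standard moment bound for $x_s$ (Lemma~\ref{lemma_xmoments}). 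You instead use SES of $\PP_t$ and need $\delta$-uniform moments of $\overline{x}_s^\delta$; you obtain the latter by showing $U_0^\delta$ inherits the Lyapunov condition \eqref{cond_Lyap} with $\delta$-independent constants (the same inequality the paper derives later, in the proof of Theorem~\ref{thm_implicit}) and then applying Lemma~\ref{lemma_xmoments} to the modified SDE. Your route has the advantage that, for this particular proposition, it avoids the delicate Lemma~\ref{lemma_modified_ExpDecay}; the paper's route has the advantage that Lemma~\ref{lemma_modified_ExpDecay} is needed anyway for Proposition~\ref{prop_modified}, so using it here is free. Two small points: your claim that Assumption~\ref{ass_for_modified_expdecay} implies Assumption~\ref{ass_expdecay2} is true but requires a short verification (use $\lambda^\star>0$ to manufacture the $\gamma>0$ in \eqref{cond_expdecay_Vbis}--\eqref{cond_expdecay_V2}, possibly enlarging $\rho$ slightly within $(0,1/5)$); and the intermediate inequality $|y|^2\geq(1+2\delta b_0)|F^\delta(y)|^2-2\delta b_1$ you display is not what yields $\langle U_0^\delta(y),y\rangle\leq\frac{-b_0|y|^2+b_1}{1+2\delta b_0}$ --- the latter comes from expanding $\langle U_0(F^\delta(y)),F^\delta(y)\rangle$ and rearranging, as in the proof of Theorem~\ref{thm_implicit}.
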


\begin{proof}
Denoting by $\overline{\LL}^\delta$ and $\overline{\PP}_t^\delta$ respectively the generator and semigroup associated to the modified SDE \eqref{modifiedSDE}, by Lemma \ref{lemma_P-Q} we can write
\begin{align*}
    &\E_x\left[ f\left(\overline{x}_{t}^\delta\right)\right]\,-\,\E_x\left[f\left(x_{t}\right)\right]\\
    &\qquad=\, \E_x \left[ \int_0^t \left( \overline{\LL}^\delta-\LL \right)\overline{\PP}^\delta_{t-s}f\left(x_s\right)\,ds \right] \\
    &\qquad=\, \E_x \bigg[ \int_0^t \langle  U_0^\delta\left(x_s\right) - U_0\left(x_s\right) ,\, \nabla \overline{\PP}^\delta_{t-s}f(x_s)\rangle \\
    &\qquad\qquad+ \sum_{k=1}^d \left( V^\delta_k\left(x_s\right) - V_k\left(x_s\right)  \right)^T\, \nabla^2\overline{\PP}^\delta_{t-s}f\left(x_s\right)\, \left( V^\delta_k\left(x_s\right) - V_k\left(x_s\right) \right)\, ds \bigg]\ .
\end{align*}
By Assumption \ref{ass_for_modified_expdecay}, we can apply Lemma \ref{lemma_modified_ExpDecay} and obtain
\begin{align*}
    &\left|\E_x\left[ f\left(\overline{x}_{t}^\delta\right)\right]-\E_x\left[f\left(x_{t}\right)\right]\right|\\
    &\qquad\leq\, \int_0^t \|\overline\PP^\delta_{t-s}f\|_{\CC^2_b} \, \E_x\left[ \left|U^\delta_0(x_s)-U_0(x_s)\right|+\sum_{k=1}^d\left|V^\delta_k(x_s)-V_k(x_s)\right|^2\right]\, ds\\
    &\qquad\leq\, \hat{K}_0\,\|f\|_{\CC^2_b}\,\int_0^t  e^{-\sigma(t-s)}\,\E_x\left[ \left|U^\delta_0(x_s)-U_0(x_s)\right|+\sum_{k=1}^d\left|V^\delta_k(x_s)-V_k(x_s)\right|^2 \right]\, ds\ .
\end{align*}

By \eqref{U0d-U0}, we have
\begin{align*}
    &\left|\E_x\left[ f\left(\overline{x}_{t}^\delta\right)\right]-\E_x\left[f\left(x_{t}\right)\right]\right|\\
    &\qquad\leq\,C\,\|f\|_{\CC^2_b}\, \int_0^t e^{-\sigma(t-s)} \,\E_x\left[ \left(\left(1+\left|x_s\right|^{1+2q}\right)\,\delta\,+\, \left(1+\left|x_s\right|^{2+2q}\right)\,\delta^2\right)\right]\,ds\\
    &\qquad\leq\,\delta\,C\,\|f\|_{\CC^2_b}\, \int_0^t e^{-\sigma(t-s)} \, \left(1+ \E_x\left[ \left|x_s\right|^{2+2q}\right]\right)\,ds\ ,
\end{align*}
where the constant $C>0$ may change from line to line. To conclude we apply Lemma \ref{lemma_xmoments} and obtain
\begin{align*}
    \left|\E_x\left[ f\left(\overline{x}_{t}^\delta\right)\right]-\E_x\left[f\left(x_{t}\right)\right]\right|\,\leq\,\delta\,C\,\|f\|_{\CC^2_b}\, \int_0^t \left(1+e^{-b_0 s}\left|x\right|^{2+2q}\right)\, e^{-\sigma (t-s)} \,ds\ .
\end{align*}
The statement follows by integrating.

\end{proof}

\subsection{UiT Weak Convergence for EM schemes for the modified SDE \eqref{modifiedSDE}}\label{subsection_moments}

In this subsection, we study the weak convergence of the explicit Euler scheme for the modified SDE \eqref{modifiedSDE} (see Proposition \ref{prop_modified}). To this end, we first provide uniform in time bounds for the moments of Euler approximation $\overline{X}^\delta_{t_n}$ of the modified SDE \eqref{modifiedSDE}.

\begin{lemma}\label{lemma_alternative}

Let the SDE \eqref{SDE_ito} satisfy Assumption \ref{ass_onesidedLip}. Then, the explicit Euler scheme $\overline{X}^\delta_{t_n}$ \eqref{modified_EM} (with initial condition $\overline{X}^\delta_0=x$) satisfies the following UiT moment bounds, 
\begin{align}
\sup_{n\in\N}\,\E_x\left[\left|\overline{X}^\delta_{t_n}\right|^{2q}\right]\,\leq\, 2\,|x|^{2q}\,+\,C_q\ ,
\end{align}
for every $q\in\R$, $q\geq 1$, and any $\delta \in(0,1)$, with constant ${C}_q>0$ independent of $\delta$ and $x$.

\end{lemma}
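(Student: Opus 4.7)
The plan is to establish a Foster--Lyapunov style one-step drift inequality of the form
$\E[|\overline{X}^\delta_{t_{n+1}}|^{2q}\mid \FF_{t_n}]\leq (1-c\delta)|\overline{X}^\delta_{t_n}|^{2q}+\tilde C_q\,\delta$
for some constants $c>0$ and $\tilde C_q>0$ independent of $\delta$ (for all $\delta$ small enough), from which iteration immediately yields the UiT bound $\sup_n\E_x[|\overline{X}^\delta_{t_n}|^{2q}]\leq |x|^{2q}+\tilde C_q/c$. To exploit the monotonicity condition \eqref{cond_Lyap}, the key is to factor each Euler step through the implicit intermediate value $Z^\star_n:=F^\delta(\overline{X}^\delta_{t_n})=\overline{X}^\delta_{t_n}+\delta\,U_0^\delta(\overline{X}^\delta_{t_n})$, which by \eqref{implicit_eq} satisfies the implicit relation $Z^\star_n=\overline{X}^\delta_{t_n}+\delta\,U_0(Z^\star_n)$. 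Recalling the definition \eqref{def_Udelta_Vdelta}, the Euler recursion \eqref{modified_EM} rewrites as $\overline{X}^\delta_{t_{n+1}}=Z^\star_n+\sqrt{2}\,M_n$ with $M_n:=\sum_k V_k(Z^\star_n)\Delta B^k_{t_n}$ a conditionally Gaussian $\FF_{t_n}$-martingale increment whose coefficients are bounded by $\sqrt{K}$ thanks to \eqref{Vk_bounded}.

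The first step is a purely deterministic monotonicity bound. Taking the inner product of the implicit equation with $Z^\star_n$, \eqref{cond_Lyap} gives
\[
\langle Z^\star_n-\overline{X}^\delta_{t_n},\,Z^\star_n\rangle
=\delta\,\langle U_0(Z^\star_n),\,Z^\star_n\rangle
\leq \delta(-b_0|Z^\star_n|^2+b_1)\,,
\]
and combining with the polarisation identity $2\langle Z^\star_n-\overline{X}^\delta_{t_n},\,Z^\star_n\rangle=|Z^\star_n|^2-|\overline{X}^\delta_{t_n}|^2+|Z^\star_n-\overline{X}^\delta_{t_n}|^2$ (and discarding the non-negative term on the right) yields the sharp contractive inequality $(1+2\delta b_0)|Z^\star_n|^2\leq|\overline{X}^\delta_{t_n}|^2+2\delta b_1$. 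Raising to the power $q\geq 1$ and applying Young's convexity inequality in the form $(a+b)^q\leq(1+\eta)^{q-1}a^q+(1+\eta^{-1})^{q-1}b^q$ with $\eta=b_0\delta$ produces
\[
|Z^\star_n|^{2q}\leq \frac{(1+b_0\delta)^{q-1}}{(1+2b_0\delta)^q}\,|\overline{X}^\delta_{t_n}|^{2q}+C_q\,\delta\,,
\]
whose first factor equals $1-(q+1)b_0\delta+O(\delta^2)$, so that $|Z^\star_n|^{2q}\leq (1-q b_0\delta)|\overline{X}^\delta_{t_n}|^{2q}+C_q\,\delta$ for $\delta$ small.

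The second step controls the noise contribution. Expanding $|\overline{X}^\delta_{t_{n+1}}|^{2q}=(|Z^\star_n|^2+2\sqrt{2}\langle Z^\star_n,M_n\rangle+2|M_n|^2)^q$, using $\E[\langle Z^\star_n,M_n\rangle\mid\FF_{t_n}]=0$ and the Gaussian moment bound $\E[|M_n|^{2p}\mid \FF_{t_n}]\leq C_p\,\delta^p\,K^p$ for every $p\geq 1$, the conditional expectation satisfies
\[
\E[|\overline{X}^\delta_{t_{n+1}}|^{2q}\mid\FF_{t_n}]\,\leq\,|Z^\star_n|^{2q}+C\,q\,K\,\delta\,|Z^\star_n|^{2(q-1)}+C_q\,\delta^{q\wedge 2}\,.
\]
For integer $q$ this is a direct binomial expansion; for real $q\geq 1$ it follows from a Taylor expansion of the function $s\mapsto s^q$ with integral remainder (or, more crudely, by interpolating via $|y|^{2q}\leq 1+|y|^{2\lceil q\rceil}$ and applying the integer-moment result to $\lceil q\rceil$). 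A further application of Young's inequality, $\delta|Z^\star_n|^{2(q-1)}\leq\eta''|Z^\star_n|^{2q}+C_q\delta^{q/(q-1)\wedge q}$, with $\eta''$ small, absorbs the cross term into the contraction factor obtained in step one. Combining the two steps yields the advertised drift inequality with $c=\frac{1}{2}q\,b_0$, and the UiT moment bound follows by iteration.

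The main technical obstacle is the bookkeeping that produces a contraction coefficient $c>0$ \emph{independent} of $\delta$: both Young parameters ($\eta$ when raising the deterministic bound to the $q$-th power and $\eta''$ when absorbing $\delta|Z^\star_n|^{2(q-1)}$) must be chosen small enough to preserve the linear-in-$\delta$ gain coming from monotonicity, yet large enough that the remainder constant $C_q$ does not blow up as $\delta\to 0$. The use of the implicit value $Z^\star_n$ is essential here, as it is $Z^\star_n$, not $\overline X^\delta_{t_n}$, that feels the dissipation in \eqref{cond_Lyap}; the boundedness \eqref{Vk_bounded} of $V_k$ is what ensures the noise contribution is a genuine $O(\delta)$ correction rather than competing with the drift.
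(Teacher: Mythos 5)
Your argument is correct in substance and rests on the same two pillars as the paper's proof --- the dissipativity inherited by $F^\delta$ from \eqref{cond_Lyap} and the boundedness \eqref{Vk_bounded} of the $V_k$ --- but it is organised differently. For the deterministic step, you obtain $(1+2\delta b_0)|Z^\star_n|^2\leq|\overline X^\delta_{t_n}|^2+2\delta b_1$ by polarisation, whereas the paper optimises a Young parameter to get $|F^\delta(x)|^2\leq(1+b_0\delta)^{-2}|x|^2+2b_1\delta(1+b_0\delta)^{-1}$ (marginally sharper, equivalent to leading order, see \eqref{Fbound}). For the stochastic step, the paper expands the full $2k$-th moment binomially, bounds every Gaussian moment explicitly, and invokes the bespoke splitting inequality of Lemma \ref{technical_lemma} with $\a=b_0\delta 2^{-k+1}$ so that the contraction factor survives; you instead package everything as a one-step Foster--Lyapunov drift inequality and absorb the cross term $\delta|Z^\star_n|^{2(q-1)}$ by Young. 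Your organisation is arguably cleaner and yields a slightly better constant ($|x|^{2q}+C_q$ rather than $2|x|^{2q}+C_q$), and the bookkeeping you highlight (keeping both Young parameters proportional to $\delta$-independent quantities so the gain $c\delta$ is not destroyed) is exactly the point where the paper's proof also has to be careful.

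The one place where your sketch has a real soft spot is the extension from integer to real $q\geq1$ at the \emph{one-step} level. The Taylor-with-integral-remainder route for $s\mapsto s^q$ is delicate when $1\leq q<2$, since the second derivative $q(q-1)s^{q-2}$ is singular at $s=0$ and the expansion point $|Z^\star_n|^2$ can be arbitrarily small; this can be repaired but needs an explicit elementary inequality rather than a bare Taylor formula. Your fallback, interpolating via $|y|^{2q}\leq1+|y|^{2\lceil q\rceil}$, does not work as written: it propagates the exponent $2\lceil q\rceil$ into the final bound, producing $|x|^{2\lceil q\rceil}$ rather than $|x|^{2q}$. The correct interpolation --- and the one the paper uses --- is Jensen/H\"older applied to the \emph{iterated} integer-moment bound, $\E_x\bigl[|\overline X^\delta_{t_n}|^{2q}\bigr]\leq\E_x\bigl[|\overline X^\delta_{t_n}|^{2k}\bigr]^{q/k}$ with $k=\lceil q\rceil$, followed by $(A+B)^{q/k}\leq A^{q/k}+B^{q/k}$. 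With that substitution your proof closes.
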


\begin{proof}
Applying condition \eqref{cond_Lyap} to $F^\delta(x)$ \eqref{def_Fdelta}, we have
\begin{align*}
    \langle U^\delta_0(x),\,F^\delta(x)\rangle\,=\, \langle U_0\left(F^\delta(x)\right),\, F^\delta(x)\rangle\,\leq\, -b_0\,\left|F^\delta(x)\right|^2\,+\, b_1\ .\label{Lyap_delta}
\end{align*}
Hence, recalling that $F^\delta(x)=x+\delta\,U_0^\delta(x)$ by definition, using Young's inequality and the above bound one gets
\begin{align*}
    \left|F^\delta(x)\right|^2\,&=\, \langle x,\,F^\delta(x) \rangle\,+\, \delta\cdot \langle U_0^\delta(x),\,F^\delta(x) \rangle\\
    &\leq\, \frac{1}{2\a}\,|x|^2\,+\,\frac{\a}{2}\,\left|F^\delta(x)\right|^2\,-\,b_0\delta \,\left|F^\delta(x)\right|^2\,+\, b_1\delta\ ,
\end{align*}
for any $\a>0$ possibly depending on $\delta$. Rearranging, we obtain
\begin{equation*}
    \left|F^\delta(x)\right|^2\,\leq\, \frac{1}{2\a\left(1+b_0\delta-\frac{\a}{2}\right)}\,|x|^2 + \frac{b_1\delta}{1+b_0\delta-\frac{\a}{2}}\ ,
\end{equation*}
provided $\a<2+2b_0\delta$. The coefficient of $|x|^2$ is minimised when $\a=1+b_0\delta$, and in this case we obtain
\begin{equation}\label{Fbound}
    \left|F^\delta(x)\right|^2\,\leq\, \frac{1}{\left(1+b_0\delta\right)^2}\,|x|^2 + \frac{2b_1\delta}{1+b_0\delta}\ .
\end{equation}

Now, from the explicit expression of $\overline{X}^\delta_\delta$ \eqref{modified_EM} at time $t_1=\delta$, we can write
\begin{align*}
    \E_x\left[ \left| \overline{X}^\delta_\delta\right|^{2k} \right]\,&=\, \E\left[ \left|F^\delta(x)+\sum_{i=1}^d V_i^\delta (x)B_\delta^i  \right|^{2k} \right]\\
    &=\,\sum_{\ell=0}^k \binom{2k}{2\ell} \left|F^\delta(x)\right|^{2\ell}\, \E\left[  \left|\sum_{i=1}^d V_i^\delta (x)B_\delta^i  \right|^{2k-2\ell}  \right]\\
    &\leq \sum_{\ell=0}^k \binom{2k}{2\ell}\,\left(\frac{1}{h^2}\,|x|^2 + 2b_1\,\delta\right)^{\ell}\cdot  K^{2k-2\ell}\,\sum_{i=1}^d \E\left[|B^i_\delta|^{2k-2\ell}\right]\ ,
\end{align*}
with $h:=1+b_0\delta>1$, where the last step follows by \eqref{Fbound} and condition \eqref{Vk_bounded} for the operators $V_i$. Moreover, since $B^i_\delta$ are (independent) Brownian motions,
\begin{equation*}
    \E\left[|B^i_\delta|^{2k-2\ell}\right]\,=\, \frac{(2k-2\ell)!}{(k-\ell)!\, 2^{k-\ell}}\, \delta^{k-\ell}\ ,
\end{equation*}
thus,
\begin{equation*}
    \E_x\left[ \left| \overline{X}^\delta_\delta\right|^{2k} \right]\,\leq\,\sum_{\ell=0}^k \binom{k}{\ell}\,\left(\frac{1}{h^2}\,|x|^2 + 2b_1\,\delta\right)^{\ell}\cdot  \frac{K^{2k-2\ell}}{2^{k-\ell}}\,\frac{2k!\,\ell !}{k!\,2\ell!\, }\, \delta^{k-\ell}\ .
\end{equation*}
To simplify the expression we introduce
\begin{equation*}
    c_k\,:=\, \frac{K^{2}}{2}\cdot \max_{\ell=0,\dots,k}\frac{2k!\,\ell !}{k!\,2\ell!\, }\ ,
\end{equation*}
thus for $k>1$ we can bound the $2k$-th moment as
\begin{align*}
    \E_x\left[ \left| \overline{X}^\delta_\delta\right|^{2k} \right]\,&\leq\, \left(\frac{1}{h^2}\,|x|^2 + (2b_1+c_k)\,\delta\right)^{k}\\
    &\leq\, \left(1+2^{k-1}\,\a\right)\,\frac{1}{h^{2k}}\,|x|^{2k}\,+\, \left(1+2^{k-1}\,\a^{-1}\right)\,(2b_1+c_k)^{k}\,\delta^k\ ,
\end{align*}
for any $\a>0$, where the last inequality follows by Lemma \ref{technical_lemma}. Choosing $\a=b_0\delta\,2^{-k+1}$, we have
\begin{equation*}
    \left(1+2^{k-1}\,\a\right)= h\ ,
\end{equation*}
and 
\begin{equation*}
     \left(1+2^{k-1}\,\a^{-1}\right)\,(2b_1+c_k)^{k}\,\delta^k\,\leq\, C_k\,\delta^{k-1}\ ,
\end{equation*}
where $C_k>0$ is independent of $\delta$. This gives 
\begin{align}
    \label{2kmoment}
    \E_x\left[ \left| \overline{X}^\delta_{\delta}\right|^{2k} \right]\,\leq\, \frac{1}{(1+b_0\delta)^{2k-1}}\,|x|^{2k}\,+\, {C}_k\,\delta^{k-1}\ ,
\end{align}
for any $k>1$. Whereas for $k=1$, we simply have
\begin{equation}
    \label{2moment}
    \E_x\left[ \left| \overline{X}^\delta_{\delta}\right|^{2} \right]\,\leq\, \frac{1}{(1+b_0\delta)^{2}}\,|x|^{2}\,+\, (2b_1+c_1)\,\delta\ .
\end{equation}
Iterating \eqref{2kmoment}, we then obtain UiT $2k$-th moment bounds for $k\in\N$, $k\geq 2$,
\begin{align*}
    \E_x\left[ \left| \overline{X}^\delta_{t_n}\right|^{2k} \right]\,&\leq\, \frac{1}{(1+b_0\delta)^{n(2k-1)}}\,|x|^{2k}\,+\, \frac{C_k\,\delta^{k-1}}{1-\frac{1}{(1+b_0\delta)^{2k-1}}}\\
    &\leq\,\frac{1}{(1+b_0\delta)^{n(2k-1)}}\,|x|^{2k}\,+\, {C}_k\,\delta^{k-2} \ ,
\end{align*}
for $\delta$ small enough and for every $n,k\in\N$ with $k\geq 2$ (the case for $k=1$ follows similarly from \eqref{2moment}). To extend this result to $2q$-th moment bounds with $q\in\R$, $q\geq 1$, choose $k\in\N$ such that $k-1<q\leq k$. By applying H\"older's inequality with $p=\frac{k}{q}\geq 1$, we obtain
\begin{align*}
    \E_x\left[ \left| \overline{X}^\delta_{t_n}\right|^{2q} \right]\,&\leq\,\E_x\left[ \left| \overline{X}^\delta_{t_n}\right|^{2k} \right]^{\frac{q}{k}}\,\leq\,\left(\frac{1}{(1+b_0\delta)^{n(2k-1)}}\,|x|^{2k}\,+\, {C}_k\,\delta^{k-2}\right)^{\frac{q}{k}}\\
    &\leq\,\frac{2}{(1+b_0\delta)^{\frac{q\cdot n(2k-1)}{k}}}\,|x|^{2q}\,+\, C_q\,\delta^{\frac{q(k-2)}{k}}\ ,
\end{align*}
in case $k\geq 2$, whereas
\begin{align*}
    \E_x\left[ \left| \overline{X}^\delta_{t_n}\right|^{2q} \right]\,\leq\,\frac{2}{(1+b_0\delta)^{ 2q \cdot n}}\,|x|^{2q}\,+\, C\ ,
\end{align*}
for $k=1$, where the constants $C_k,\,C_q,\,C>0$ can change from line to line. This proves the statement for any $\delta\in (0,1)$.

\end{proof}

\begin{prop}\label{prop_modified}
    Consider the Euler-Maruyama scheme $\overline{X}^\delta_{t_n}$ \eqref{modified_EM} with step $\delta>0$ for the solution $\overline{x}^\delta_t$ of the modified SDE \eqref{modifiedSDE}. Under Assumption \ref{ass_onesidedLip}, $\overline{X}^\delta_{t_n}$ satisfies conditions \eqref{cond_Euler} and \eqref{cond_bound_Phi}.

    If in addition Assumption \ref{ass_for_modified_expdecay} holds, then condition \eqref{cond_SDE} is satisfied as well, and there exists a constant $K>0$ such that
\begin{equation}\label{modified_UiT}
\sup_{n\in\N} \,\left| \E_x\left[ f(\overline{x}^\delta_{t_n})\right]-\E_x\left[f\left(\overline{X}^{\delta}_{t_n}\right)\right] \right|\,\leq\, K\|f\|_{\CC^2_b}\cdot \delta^{1/2}\, \left(1+|x|^{2q+1}\right)\   ,
\end{equation}
for any $f\in\CC^2_b(\R^N)$ and $\delta>0$ small enough, with $q>0$ as in condition \eqref{cond_poly}. 
\end{prop}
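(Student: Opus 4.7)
The plan is to invoke the abstract criterion of Proposition \ref{prop_weak} with the modified SDE $\overline{x}^\delta_t$ in place of the SDE and its Euler--Maruyama discretisation $\overline{X}^\delta_{t_n}$ in place of the scheme. Of the three requirements in Assumption \ref{ass_weak}, two are essentially in hand: the strong exponential stability estimate \eqref{cond_SDE} for $\overline{\PP}^\delta_t$, with constants independent of $\delta$, is exactly what Lemma \ref{lemma_modified_ExpDecay} provides under the extra Assumption \ref{ass_for_modified_expdecay}, while the uniform-in-time moment bounds of Lemma \ref{lemma_alternative} will take care of the a-priori control \eqref{cond_bound_Phi}. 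So the real work is to verify the local consistency \eqref{cond_Euler} with a $\phi$ of the correct form and then read off $\Phi$.

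For the local consistency I would follow the scheme of Proposition \ref{prop_EM}: using the analog of identity \eqref{25} for the modified SDE one has
\begin{equation*}
\E_x\bigl[f(\overline{x}^\delta_\delta)-f(\overline{X}^\delta_\delta)\bigr]=\E_x\!\left[\int_0^\delta \bigl(\overline{\LL}^\delta_{(\overline{X}^\delta_r)}-\overline{\LL}^\delta_{(x)}\bigr)\overline{\PP}^\delta_{\delta-r}f(\overline{X}^\delta_r)\,dr\right].
\end{equation*}
After expanding with the definition of the frozen generator and bounding the first two spatial derivatives of $\overline{\PP}^\delta_{\delta-r}f$ by $K_0\|f\|_{\CC^2_b}$ via SES, the task reduces to controlling $|U_0^\delta(\overline{X}^\delta_r)-U_0^\delta(x)|$ and $|V_k^\delta(\overline{X}^\delta_r)-V_k^\delta(x)|$ multiplied by an increment $|\overline{X}^\delta_r-x|$, integrated over $r\in[0,\delta]$.

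The main obstacle is that the drift $U_0^\delta$ has Lipschitz constant of order $1/\delta$ by \eqref{U0delta}; using this bound crudely would give $\phi(x,\delta)\sim\delta^{1/2}$, which diverges after dividing by $1-e^{-\sigma\delta}\sim\sigma\delta$ in \eqref{weak_conv}. The way around is to exploit the composition structure $U_0^\delta=U_0\circ F^\delta$: since $F^\delta$ is Lipschitz uniformly in $\delta$ by \eqref{Fdelta}, and $|F^\delta(y)|\leq c(1+|y|)$ for small $\delta$, the local Lipschitz condition \eqref{cond_poly} on $U_0$ yields
\begin{equation*}
|U_0^\delta(\overline{X}^\delta_r)-U_0^\delta(x)|\,\leq\, C\bigl(1+|\overline{X}^\delta_r|^q+|x|^q\bigr)\,|\overline{X}^\delta_r-x|,
\end{equation*}
with $C$ independent of $\delta$. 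The diffusion differences are controlled directly by \eqref{Vdelta} and the uniform boundedness of $V_k^\delta$, both uniform in $\delta$. For the increment itself, the interpolant representation on $[0,\delta]$ combined with $|U_0^\delta(x)|\leq C(1+|x|^{q+1})$ (from \eqref{|U0delta|} and the polynomial growth of $U_0$) and the uniform boundedness of $V_k^\delta$ give $\E_x[|\overline{X}^\delta_r-x|^2]\leq Cr(1+|x|^{2q+2})$ for $r\in[0,\delta]$. Combining via Cauchy--Schwarz with uniform moment bounds for $\overline{X}^\delta_r$ (obtained in $[0,\delta]$ by a direct variant of the calculation behind Lemma \ref{lemma_alternative}) and integrating in $r$ yields the local consistency \eqref{cond_Euler} with
\begin{equation*}
\phi(x,\delta)=C\,\delta^{3/2}\bigl(1+|x|^{2q+1}\bigr).
\end{equation*}

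To finish, I would use Lemma \ref{lemma_alternative}, combined with H\"older's inequality to pass from nearby even-order moments to $2q+1$, to check that $\sup_{n}\MM^\delta_n\phi(x,\delta)\leq \Phi(x,\delta)$ with $\Phi$ of the same functional form as $\phi$, i.e.\ $\Phi(x,\delta)\leq C\,\delta^{3/2}(1+|x|^{2q+1})$. Substituting into Proposition \ref{prop_weak} and absorbing the loss $1-e^{-\sigma\delta}\sim \sigma\delta$ produces the stated UiT bound \eqref{modified_UiT} with weak order $\delta^{1/2}$ and polynomial dependence $1+|x|^{2q+1}$ on the initial datum.
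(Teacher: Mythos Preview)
Your proposal is correct and follows essentially the same route as the paper's proof: apply Proposition \ref{prop_weak}, obtain SES from Lemma \ref{lemma_modified_ExpDecay}, verify the local consistency \eqref{cond_Euler} with $\phi(x,\delta)=C\delta^{3/2}(1+|x|^{2q+1})$ by exploiting $U_0^\delta=U_0\circ F^\delta$ together with \eqref{Fdelta} and \eqref{cond_poly} (rather than the crude $1/\delta$-Lipschitz bound \eqref{U0delta}), and then use Lemma \ref{lemma_alternative} for the a-priori control \eqref{cond_bound_Phi}. The only cosmetic difference is that, to bound $\E_x[|\overline{X}^\delta_r|^{2q}]$ for $r\in[0,\delta]$, the paper uses the interpolant formula directly together with \eqref{U0delta} (so that $r^{2q}|U_0^\delta(x)|^{2q}\leq C|x|^{2q}$ for $r\leq\delta$), rather than invoking a variant of Lemma \ref{lemma_alternative}; either argument works.
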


\begin{proof}
By Proposition \ref{prop_weak}, if we prove that conditions \eqref{cond_SDE}, \eqref{cond_Euler} and \eqref{cond_bound_Phi} hold for the modified Euler scheme $\overline{X}^\delta_{t_n}$, then \eqref{modified_UiT} follows. The estimate \eqref{cond_SDE} holds under Assumption \ref{ass_for_modified_expdecay}, by Lemma \ref{lemma_modified_ExpDecay}. Thus, we only need to prove that conditions \eqref{cond_Euler} and \eqref{cond_bound_Phi} are satisfied by $\overline{X}^\delta_{t_n}$ under Assumption \ref{ass_onesidedLip}. More precisely, we show that, under Assumption \ref{ass_onesidedLip}, condition \eqref{cond_Euler} is satisfied with
\begin{equation}\label{modified_phi}
    \phi(x,\delta)\,:=\, \delta^{3/2}\left(1+|x|^{2q+1}\right)\ ,
\end{equation}
with $q>0$ as in condition \eqref{cond_poly}, and that
\begin{equation}\label{modified_moments}
\sup_{n\in\N}\,\E_x\left[\left|\overline{X}^\delta_{t_n}\right|^{2q+1}\right]\,\leq\, 2\,|x|^{2q+1}\,+\,C_q\ ,
\end{equation}
for some constant $C_q>0$ independent of $x$ and $\delta$. Thus, by Lemma \ref{standard_eg}, we have that condition \eqref{cond_bound_Phi} is satisfied with
\begin{equation*}
    \Phi(x,\delta)\,=\, \delta^{3/2}\,\left(2\,|x|^{2q+1}+C_q\right)\ ,
\end{equation*}
and, by Proposition \ref{prop_weak}, we obtain \eqref{modified_UiT}. 

Since \eqref{modified_moments} follows directly from Lemma \ref{lemma_alternative}, we are left to prove \eqref{modified_phi}. Similarly to the proof of Proposition \ref{prop_EM}, we have 
\begin{align*}
    \E_x&\left[f\left(\overline{X}^\delta_\delta\right)-f(\overline{x}^\delta_\delta)\right]\\
    &\;=\,\E_x\bigg[ \int_0^\delta \langle U^\delta_0(x)-U^\delta_0(\overline{X}^\delta_r),\,\nabla \overline{\PP}^\delta_{s-r}f(\overline{X}^\delta_r)\rangle\nonumber\\
    &\qquad\qquad+\,\sum_{k=1}^d \left(V^\delta_k(x)-V^\delta_k(\overline{X}^\delta_r)\right)^T\,\nabla^2\overline{\PP}^\delta_{s-r}f(\overline{X}^\delta_r)\, \left(V^\delta_k(x)+V^\delta_k(\overline{X}^\delta_r)\right)\,dr\bigg]\ .
\end{align*}
By \eqref{Fdelta}, condition \eqref{cond_poly} is satisfied also for $U^\delta_0(x)=U_0(F^\delta(x))$, thus we obtain
\begin{align*}
     \left|\E_x\left[f\left(\overline{X}^\delta_\delta\right)-f(\overline{x}^\delta_\delta)\right]\right|&\\
     \leq\, &C\,\|f\|_{\CC^2_b}\int_0^\delta \E_x\left[\left( 1+|x|^q+ \left|\overline{X}^\delta_r\right|^q\right)\,\left|\overline{X}^\delta_r-x\right|+\left|\overline{X}^\delta_r-x\right|^2\right]\, dr\\
     \leq\, &\begin{multlined}[t]
        C\,\|f\|_{\CC^2_b}\int_0^\delta\bigg( \left[\E_x (1+|x|^{2q}+ \left|\overline{X}^\delta_r\right|^{2q})\right]^{1/2}\cdot\left[\E_x\left|\overline{X}^\delta_r-x\right|^2\right]^{1/2}\\
     +\E_x\left[\left|\overline{X}^\delta_r-x\right|^2\right]\bigg)\,dr\ ,
     \end{multlined}
\end{align*}
for some constant $C>0$, where the last inequality follows by Holder's inequality. By definition of the continuous-time interpolant $\overline{X}^\delta_r$ \eqref{interpolant_overline}, we have
\begin{align*}
     \E_x\left[\left|\overline{X}^\delta_r-x\right|^2\right]\,&=\, \E_x\left[\left|r\,U_0^\delta(x)+\sum_{k=1}^d V^\delta_k(x)\,B_r^k\right|^2\right]\\
     &\leq\,  r^2\,\left|U^\delta_0(x)\right|^2\,+\, K\,r\ .
\end{align*}
Moreover, using \eqref{interpolant_overline} and \eqref{U0delta}, we have
\begin{align*}
    \left[\E_x (1+|x|^{2q}+ \left|\overline{X}^\delta_r\right|^{2q})\right]^{1/2}\,&
    =\,  \left[\E_x\left( 1+|x|^{2q}+ \left|x+r\,U^\delta_0(x)+\sum_{k=1}^d V^\delta_k(x)\,B^k_r\right|^{2q}\right)\right]^{1/2}\\
    &\leq\, C_q\,\left[\E_x\left(1+|x|^{2q}+ r^{2q}\,\frac{|x|^{2q}}{\delta^{2q}}\,+\, \sum_{k=1}^d \left|B^k_r\right|^{2q} \right)\right]^{1/2}\\&\leq\,C_q\,\left(1+|x|^q\right)\ ,
\end{align*}
for any $\delta\in(0,1)$ and $r<\delta$, where the constant $C>0$ (independent of $\delta,\;r$ and $x$) can change from line to line. Combining all together, we obtain
\begin{align*}
\left|\E_x\left[f\left(\overline{X}^\delta_\delta\right)-f(\overline{x}^\delta_\delta)\right]\right|\,&\leq\, C\left(1+|x|^q\right)\int_0^\delta \left(r\,\left|U^\delta_0(x)\right|+r^{1/2}\right)\,dr\\
&\leq\,C\left(1+|x|^q\right)\cdot\left(\delta^2\,\left|U^\delta_0(x)\right|+\delta^{3/2}\right)\\
&\leq\,C\,\delta^{3/2}\left(|x|^{2q+1}+1\right)\ ,
\end{align*}
by applying again \eqref{cond_poly} for $U^\delta_0$.

\end{proof}

\subsection{UiT Weak Convergence for Split-Step and Implicit Euler Schemes}\label{subsection_weak}

In this subsection, we show how to apply the UiT weak convergence for EM schemes for the modified SDE \eqref{modifiedSDE} (see Proposition \ref{prop_modified}) to obtain the UiT convergence of split-step and implicit Euler schemes for the original SDE \eqref{SDE_ito}, see Theorem \ref{thm_split} and Theorem \ref{thm_implicit}, respectively.

\begin{thm}[UiT weak convergence for the split-step scheme]\label{thm_split}
Let $\left\{Z^\delta_{t_n}\right\}_{n\in\N}$ be the split-step Euler approximation \eqref{SS2}-\eqref{SS1} for the solution $x_t$ of the SDE \eqref{SDE_ito}. 
Under Assumption \ref{ass_onesidedLip} and Assumption \ref{ass_for_modified_expdecay}, there exists a constant $K>0$ such that
\begin{equation*}
\sup_{n\in\N} \,\left| \E_x\left[ f(x_{t_n})\right]-\E_x\left[f\left(Z^{\delta}_{t_n}\right)\right] \right|\,\leq\, K\,\|f\|_{\CC^2_b}\cdot \delta^{1/2}\, \left(1+|x|^{2q+2}\right) ,
\end{equation*}
for any $f\in\CC^2_b(\R^N)$ and $\delta>0$ small enough.

\end{thm}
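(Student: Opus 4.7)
The plan is to exploit the identification $Z^\delta_{t_n}=\overline{X}^\delta_{t_n}$ for all $n\in\N$ (when both schemes are started at the same point $x\in\R^N$), established in the discussion following \eqref{modifiedSDE}. This identification means that the weak error of the split-step scheme against the original SDE \eqref{SDE_ito} factors as the weak error of the Euler--Maruyama scheme for the modified SDE \eqref{modifiedSDE} plus the weak error between the two SDEs themselves. Concretely, for every $f\in\CC_b^2(\R^N)$, $n\in\N$, one writes the triangle-type decomposition
\begin{equation*}
\E_x f(x_{t_n})-\E_x f(Z^\delta_{t_n})
\,=\,\underbrace{\E_x f(x_{t_n})-\E_x f(\overline{x}^\delta_{t_n})}_{(\mathrm{I})}
\,+\,\underbrace{\E_x f(\overline{x}^\delta_{t_n})-\E_x f(\overline{X}^\delta_{t_n})}_{(\mathrm{II})}\ .
\end{equation*}

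The first step is to bound $(\mathrm{I})$ uniformly in $n$ by invoking Proposition \ref{prop_SDE_modified}, which, under Assumption \ref{ass_onesidedLip} and Assumption \ref{ass_for_modified_expdecay}, yields
\begin{equation*}
\sup_{n\in\N}|(\mathrm{I})|\,\leq\, \delta\,K\,\|f\|_{\CC_b^2}\bigl(1+|x|^{2+2q}\bigr)\ .
\end{equation*}
The second step is to bound $(\mathrm{II})$ uniformly in $n$ by invoking Proposition \ref{prop_modified}, which under the same assumptions gives
\begin{equation*}
\sup_{n\in\N}|(\mathrm{II})|\,\leq\, K\,\|f\|_{\CC_b^2}\,\delta^{1/2}\bigl(1+|x|^{2q+1}\bigr)\ .
\end{equation*}
Summing the two bounds, the $\delta^{1/2}$ term from $(\mathrm{II})$ dominates for $\delta>0$ small enough, while the polynomial dependence on $|x|$ is absorbed by the elementary estimate $(1+|x|^{2q+1})+\delta(1+|x|^{2+2q})\leq C\,(1+|x|^{2q+2})$ uniformly for $\delta\in(0,1)$. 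This produces the stated bound with constant $K>0$ independent of $n$ and $\delta$.

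The main conceptual point, rather than an obstacle, is that both ingredients $(\mathrm{I})$ and $(\mathrm{II})$ are already established with constants that are independent of $\delta$: in particular, the SES constants for $\overline{\PP}^\delta_t$ are $\delta$-uniform by Lemma \ref{lemma_modified_ExpDecay}, and the moment control for $\overline{X}^\delta_{t_n}$ provided by Lemma \ref{lemma_alternative} is UiT with $\delta$-independent constants. Consequently, no Gronwall-type factor re-enters through the modification, and the whole proof reduces to invoking Proposition \ref{prop_SDE_modified} and Proposition \ref{prop_modified} and combining the polynomial moments as above. The only small technical check is that Assumption \ref{ass_for_modified_expdecay} (needed for Proposition \ref{prop_SDE_modified} and for the SES part of Proposition \ref{prop_modified}) and Assumption \ref{ass_onesidedLip} (needed for the consistency and moment bounds of $\overline{X}^\delta_{t_n}$) are simultaneously in force, which is precisely the hypothesis of the theorem.
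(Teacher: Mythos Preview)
Your proposal is correct and follows essentially the same approach as the paper: you use the identification $Z^\delta_{t_n}=\overline{X}^\delta_{t_n}$, split the weak error into the term comparing $x_{t_n}$ with $\overline{x}^\delta_{t_n}$ (bounded via Proposition~\ref{prop_SDE_modified}) and the term comparing $\overline{x}^\delta_{t_n}$ with $\overline{X}^\delta_{t_n}$ (bounded via Proposition~\ref{prop_modified}), and then combine the two bounds. The paper's proof is exactly this, only more terse.
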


\begin{proof}
Recall that, under Assumption \ref{ass_onesidedLip}, the split-step Euler approximation $Z^{\delta}_{t_n}$ of the SDE \eqref{SDE_ito} coincides with the Euler-Maruyama approximation $\overline{X}^{\delta}_{t_n}$ of the modified SDE \eqref{modifiedSDE}, namely $Z^\delta_{t_n}=\overline{X}^\delta_{t_n}$ for every $n\in\N$, provided $\overline{X}_{0}^{\delta}=\overline{x}^\delta_0=Z^\delta_0=x$. Hence,
\begin{align*}
    \left| \E_x\left[ f(x_{t_n})\right]-\E_x\left[f\left(Z^{\delta}_{t_n}\right)\right] \right|&=\left| \E_x\left[ f(x_{t_n})\right]-\E_x\left[f\left(\overline{X}^{\delta}_{t_n}\right)\right] \right|\\
    &\leq \left| \E_x\left[ f(x_{t_n})\right]-\E_x\left[f(\overline{x}^\delta_{t_n})\right]\right|+\left|\E_x\left[f(\overline{x}^\delta_{t_n})\right]-\E_x\left[f\left(\overline{X}^{\delta}_{t_n}\right)\right] \right|\\
    &\leq\,K\,\|f\|_{\CC^2_b}\,\delta\left(1+|x|^{2+2q}\right)\,+\,K\|f\|_{\CC^2_b}\cdot \delta^{1/2}\, \left(1+|x|^{2q+1}\right)\ ,
\end{align*}
respectively by Proposition \ref{prop_SDE_modified} and Proposition \ref{prop_modified}.

\end{proof}

\begin{thm}[UiT weak convergence for the Implicit Euler scheme]\label{thm_implicit}
Let $\left\{Y^{\delta}_{t_n}\right\}_{n\in\N}$ be the implicit Euler approximation \eqref{general_implicit} for the solution $x_t$ of the SDE \eqref{SDE_ito}. Under Assumption \ref{ass_onesidedLip} and Assumption \ref{ass_for_modified_expdecay}, there exists a constant $K>0$ such that
\begin{equation*}
\sup_{n\in\N} \,\left| \E_x\left[ f(x_{t_n})\right]-\E_x\left[f\left(Y^{\delta}_{t_n}\right)\right] \right|\,\leq\, K\,\|f\|_{\CC^2_b}\cdot \delta^{1/2}\, \left(1+|x|^{(2q+1)\cdot(q+1)}\right)\ ,
\end{equation*}
for any $f\in\CC^2_b(\R^N)$ and $\delta>0$ small enough. 
\end{thm}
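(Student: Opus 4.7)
The plan is to parallel the argument of Theorem \ref{thm_split}, exploiting the identity \eqref{implicit_as_EM}, $Y^\delta_{t_n} = F^\delta(\overline{X}^\delta_{t_n})$, where $\overline{X}^\delta_{t_n}$ is the Euler--Maruyama scheme \eqref{modified_EM} for the modified SDE \eqref{modifiedSDE} but started at the \emph{shifted} initial condition $y := x - \delta U_0(x)$ rather than at $x$ (this being the main structural difference with the split-step case). Writing $\E_y$ for the expectation conditional on $\overline{X}^\delta_0 = \overline{x}^\delta_0 = y$, I would split the weak error as
\begin{equation*}
\E_x[f(x_{t_n})] - \E_x[f(Y^\delta_{t_n})] \,=\, I_1 + I_2 + I_3 + I_4,
\end{equation*}
with
\begin{align*}
I_1 &:= \E_x[f(x_{t_n})] - \E_x[f(\overline{x}^\delta_{t_n})], &
I_2 &:= \E_x[f(\overline{x}^\delta_{t_n})] - \E_y[f(\overline{x}^\delta_{t_n})], \\
I_3 &:= \E_y[f(\overline{x}^\delta_{t_n})] - \E_y[f(\overline{X}^\delta_{t_n})], &
I_4 &:= \E_y\bigl[f(\overline{X}^\delta_{t_n}) - f(F^\delta(\overline{X}^\delta_{t_n}))\bigr].
\end{align*}

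Each of the four pieces I would control with a tool already developed in the paper. The term $I_1$ is bounded by Proposition \ref{prop_SDE_modified}, giving $|I_1|\leq C\delta\|f\|_{\CC^2_b}(1+|x|^{2+2q})$. For $I_2$, I would use the $\delta$-uniform SES bound for the modified semigroup (Lemma \ref{lemma_modified_ExpDecay}) together with the basic estimate $|x-y| = \delta|U_0(x)| \leq C\delta(1+|x|^{q+1})$ (obtained from \eqref{cond_poly} with $y=0$), producing $|I_2|\leq C\delta\|f\|_{\CC^2_b}(1+|x|^{q+1})$. The term $I_3$ is handled directly by Proposition \ref{prop_modified}, which yields $|I_3|\leq K\|f\|_{\CC^2_b}\delta^{1/2}(1+|y|^{2q+1})$; since $|y| \leq C(1+|x|^{q+1})$, the polynomial factor becomes $(1+|x|^{(2q+1)(q+1)})$, producing the dominant term in the statement. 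For $I_4$, the key observation is $|F^\delta(z)-z| = \delta|U^\delta_0(z)| \leq C\delta(1+|z|^{q+1})$ (by \eqref{|U0delta|} and \eqref{cond_poly}), whence
\begin{equation*}
|I_4|\,\leq\,\|\nabla f\|_\infty\cdot C\delta\,\E_y\bigl[1+|\overline{X}^\delta_{t_n}|^{q+1}\bigr]\,\leq\,C\delta\|f\|_{\CC^2_b}\bigl(1+|x|^{(q+1)^2}\bigr),
\end{equation*}
where the last inequality uses the UiT moment bounds from Lemma \ref{lemma_alternative} (with H\"older interpolation for non-integer exponents) and again $|y|\leq C(1+|x|^{q+1})$. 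Summing the four contributions, the $O(\delta^{1/2})$ term from $I_3$ dominates and carries the polynomial growth $|x|^{(2q+1)(q+1)}$ announced in the statement, while $I_1,I_2,I_4$ are of order $\delta$ with strictly smaller polynomial exponents that are absorbed into the same factor (for $q$ large enough to satisfy \eqref{cond_poly}).

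The main technical point I expect to need care is in $I_2$: the SES constants $K_0,\sigma$ provided by Lemma \ref{lemma_modified_ExpDecay} are uniform in $\delta$, which is essential since $U^\delta_0$ itself has Lipschitz constant of order $1/\delta$ by \eqref{U0delta}; without uniformity here, $I_2$ would spoil the final rate. The secondary bookkeeping task is to track how the initial shift $|x|\mapsto|y|\leq C(1+|x|^{q+1})$ propagates through each of $I_1,I_3,I_4$, and to verify that the worst resulting exponent is precisely $(2q+1)(q+1)$, produced by the combination of Proposition \ref{prop_modified} applied from $y$ with the bound on $|y|$. No new ingredient beyond what has already been assembled in Sections \ref{subsection_modified}--\ref{subsection_moments} is needed.
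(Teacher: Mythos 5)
Your proposal is correct and follows the same overall strategy as the paper: a four-term telescoping through the modified SDE \eqref{modifiedSDE}, using Proposition \ref{prop_SDE_modified} for the SDE-to-modified-SDE error, the $\delta$-uniform SES of Lemma \ref{lemma_modified_ExpDecay} for the initial-condition shift $x\mapsto x-\delta U_0(x)$, and Proposition \ref{prop_modified} for the discretisation error, exactly as in the paper's decomposition into the terms \eqref{bound1}--\eqref{bound4}. The one structural difference is where you insert the $F^\delta$-correction coming from \eqref{implicit_as_EM}: you apply Proposition \ref{prop_modified} to $f$ itself (started from the shifted point) and only afterwards compare $f(\overline{X}^\delta_{t_n})$ with $f(F^\delta(\overline{X}^\delta_{t_n}))$, controlling that last term via the scheme's UiT moment bounds (Lemma \ref{lemma_alternative}); the paper instead compares $f$ with $f^\star=f\circ F^\delta$ at the level of the modified SDE and then applies Proposition \ref{prop_modified} to $f^\star$, which forces it to bound $\|f^\star\|_{\CC^2_b}$ via $\|F^\delta\|_{\CC^2_b}$ (Lemma \ref{lemma_derivativesF}) and to verify a Lyapunov condition for $U_0^\delta$ so that Lemma \ref{lemma_xmoments} applies to $\overline{x}^\delta_t$. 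Your ordering is marginally more economical, since it dispenses with the uniform-in-$\delta$ control of the second derivatives of $F^\delta$; both routes yield the same dominant exponent $(2q+1)(q+1)$ from the $\delta^{1/2}$ term. The only caveats, which apply equally to the paper's own proof, are that the $O(\delta)$ remainder exponents ($2+2q$ and $(q+1)^2$) are dominated by $(2q+1)(q+1)$ only for $q$ not too small, and that applying Lemma \ref{lemma_alternative} to the $(q+1)$-th moment requires a Jensen step when $q+1<2$ -- both minor bookkeeping points you have already flagged.
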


\begin{proof}
Recall that, under Assumption \ref{ass_onesidedLip},  $Y^{\delta}_{t_n}=F^\delta\left(\overline{X}^\delta_{t_n}\right)$, for any $n\in \N$, where $\overline{X}^\delta_{t_n}$ is the Euler-Maruyama discretisation of the modified SDE \eqref{modifiedSDE} with initial state $\overline{X}^\delta_0=Y^\delta_0 - \delta\, U_0(Y^\delta_0)$, $Y^\delta_0=x$ (see \eqref{implicit_as_EM}). Hence, denoting $x^\star:=x-\delta\,U_0(x)$ and $f^\star:=f\circ F^{\delta}$, we have
\begin{subequations}
\begin{align}
    \left| \E_x\left[ f(x_{t_n})\right] -\E_x\left[f\left(Y^{\delta}_{t_n}\right)\right] \right|\nonumber\,=\,&\left| \E_x\left[ f(x_{t_n})\right]-\E_{x^\star}\left[f^\star(\overline{X}^{\delta}_{t_n})\right] \right|\nonumber\\
    \leq\,&\left| \E_x\left[ f(x_{t_n})\right]-\E_x\left[ f(\overline{x}^\delta_{t_n})\right]\right|\label{bound1}\\
    &+\left|\E_x\left[ f(\overline{x}^\delta_{t_n})\right]-\E_{x^\star}\left[ f(\overline{x}^\delta_{t_n})\right]\right|\label{bound2}\\
    &+ \left|\E_{x^\star}\left[ f(\overline{x}^\delta_{t_n})\right]-\E_{x^\star}\left[ f^\star(\overline{x}^\delta_{t_n})\right]  \right| \label{bound3}\\
    &+ \left| \E_{x^\star}\left[ f^\star(\overline{x}^\delta_{t_n})\right] -\E_{x^\star}\left[f^\star(\overline{X}^{\delta}_{t_n})\right]\right|\ .
    \label{bound4}
\end{align}
\end{subequations}

By Proposition \ref{prop_SDE_modified}, \eqref{bound1} is bounded by
\begin{align*}
    \left|\E_x\left[ f(x_{t_n})\right]-\E_x\left[ f(\overline x^{\delta}_{t_n})\right]  \right| \,&\leq\,\delta\,K\,\|f\|_{\CC^2_b}\,\left(1+|x|^{2+2q}\right)\ .
\end{align*}

Moreover, recalling that $x-x^\star=\delta\,U_0(x) $ and, by condition \eqref{cond_poly}, $\left| U_0(x)\right|\leq c\, \left(1+|x|^{1+q}\right)$, we see that the term \eqref{bound2} is bounded by
\begin{align*}
    \left| \E_x\left[ f(\overline x^{\delta}_{t_n})\right]-\E_{x^\star}\left[ f(\overline x^{\delta}_{t_n})\right]\right|=\big| \overline\PP^\delta_{t_n} f(x)\,-\, \overline\PP^\delta_{t_n} f(x^\star)\big|\,
    &\leq\, \left\|\nabla \overline\PP^\delta_{t_n} f \right\|_{\infty} \left|x-x^\star\right|\\
    &\leq\, C\,\left\| \overline\PP^\delta_{t_n} f \right\|_{\CC^2_b}\,\delta\,\left(1+|x|^{1+q}\right)\\
    &\leq\,  C\,e^{-\sigma\, n \delta}\,\left\| f \right\|_{\CC^2_b}\,\delta\,\left(1+|x|^{1+q}\right) \ ,
\end{align*}
where the last inequality follows by Lemma \ref{lemma_modified_ExpDecay}, having $t_n=n\delta$.

Similarly, we can bound \eqref{bound3} by
\begin{align*}
   \left|\E_{x^\star}\left[ f(\overline x^{\delta}_{t_n})\right]-\E_{x^\star}\left[ f^\star(\overline x^{\delta}_{t_n})\right]\right|
    &\leq \, \left\|\nabla f \right\|_{\infty}\E_{x^\star}\left[ \left|\overline x^{\delta}_{t_n}-F^\delta(\overline x^{\delta}_{t_n}) \right| \right] \\
    &\leq\, C\, \left\|\nabla f \right\|_{\infty}\delta\, \E_{x^\star}\left[1+\left|\overline x^{\delta}_{t_n}\right|^{1+q}\right]\\
    &\leq\, C\,\left\| f \right\|_{\CC^2_b}\, \delta\,\left( 1 +|x^\star|^{1+q} \right) \\
    &\leq\, C\,\left\| f \right\|_{\CC^2_b}\, \delta\,\left( 1 +|x|^{(1+q)^2} \right) \ ,
\end{align*}
for any $f\in \CC^2_b(\R^N)$, where the penultimate step follows by Lemma \ref{lemma_xmoments}, since the hypotheses of Lemma \ref{lemma_xmoments} hold also for the modified SDE \eqref{modifiedSDE}. Indeed, if the Lyapunov condition \eqref{cond_Lyap} for $U_0$ holds, $U^\delta_0$ satisfies a similar Lyapunov condition. More precisely, since $F^\delta(x)=x+\delta\, U^\delta_0(x)$ by construction,
\begin{align*}
   \langle U^\delta_0(x),\,x\rangle\,+\,\delta\,|U^\delta_0(x)|^2 \,&=\,\langle U^\delta_0(x),\,F^{\delta}(x)\rangle\,=\,\langle U_0(F^{\delta}(x)),\,F^{\delta}(x)\rangle\\
    &\leq\, -b_0\,\left(|x|^2+\delta^2|U^\delta_0(x)|^2\,+\,2\delta\,\langle U^\delta_0(x),\,x\rangle\right)\,+\, b_1\ .
\end{align*}
Rearranging, we obtain that the modified vector field $U_0^\delta$ satisfies the Lyapunov condition
\begin{equation*}\label{lyap_delta}
    \langle U^\delta_0(x),\,x\rangle\,\leq\, -\frac{{b}_0}{2}|x|^2\,+\, b_1\ ,
\end{equation*}
for any $\delta\in(0,\,\frac{1}{2\,b_0})$ and $x\in\R^N$, where we have used the basic facts $\frac{b_0}{1+2\delta b_0}>\frac{b_0}{2}$ and $\frac{b_1}{1+2\delta b_0}<b_1$ for simplicity.

Finally, by Proposition \ref{prop_modified}, we can bound \eqref{bound4} by
\begin{align*}
    \left| \E_{x^\star}\left[ f^\star(\overline{x}^\delta_{t_n})\right] -\E_{x^\star}\left[f^\star(\overline{X}^{\delta}_{t_n})\right]\right|\,&\leq\, C\|f^\star\|_{\CC^2_b}\cdot \delta^{1/2}\, \left(1+|x^\star|^{2q+1}\right)\\
    &\leq\, C\,\|F^\delta\|_{\CC^2_b}\,\|f\|_{\CC^2_b}\cdot \delta^{1/2}\, \left(1+|x|^{(2q+1)\cdot(q+1)}\right)\   ,
\end{align*}
for some constant $K>0$. Under Assumption \ref{ass_for_modified_expdecay}, the hypotheses of Lemma \ref{lemma_derivativesF} are satisfied, thus $\|F^\delta\|_{\CC^2_b}\leq C$ uniformly in $\delta>0$. Combining all together we obtain Theorem \ref{thm_implicit}.

\end{proof}

\section{Truncated Tamed Euler Schemes}\label{section_tamed}

In this section, we discuss the UiT weak convergence for a Truncated Tamed Euler (TTE) scheme which we introduce below, see \eqref{tamed}. 

\begin{assumption}\label{ass_tamed}
We assume that there exist constants $q\geq 0$ and $b_0,b_1,c_2>0$ such that the local Lipshitz condition  \eqref{cond_poly} and the following inequality are satisfied (for the same constant $q\geq 0$):
\begin{align}
    \langle U_0(x),x\rangle\,\leq\, -b_0 |x|^{q+2} +b_1\ .\label{tamed_Lyap}
\end{align}
We further assume that conditions \eqref{Vk_globalLip}-\eqref{Vk_bounded} hold for $V_k$, $k=1,\dots,d$.

    
\end{assumption}

\begin{remark}
    Note that the conditions in Assumption \ref{ass_tamed} are satisfied for both the examples illustrated in Example \ref{toy_eg}, with $q=2$.
\end{remark}

Under Assumption \ref{ass_tamed}, we define the TTE scheme $\{J^\delta_{t_n}\}_{n\in\N}$ with time-step $\delta$ by
\begin{align}
    J^\delta_{t_{n+1}}\,=\, J^\delta_{t_n}\,+\, \frac{\delta\,U_0(J^\delta_{t_n})}{1\,+\, \delta\alpha\,| J^\delta_{t_n}|^q}\,+\,\sqrt{2}\sum_{k=1}^d V_k(J^\delta_{t_n})\,\Delta B^k_n\ , \qquad
    J^\delta_0\,=\, x\ ,\label{tamed}
\end{align}
with $t_n=n\delta$, $\Delta B_{t_n} = B_{t_{n+1}}-B_{t_n}$, $\a>0$ arbitrary constant, and $q\geq 0$ as introduced in Assumption \ref{ass_tamed}. The TTE scheme \eqref{tamed} with $\a=1$ has been already introduced in \cite{sabanis2016euler} (see also \cite{hu2018convergence}).

\begin{note}\label{note:section5} We make several remarks on the scheme \eqref{tamed}. 
\begin{itemize}
\item To explain the choice of `truncation', i.e. of the drift term in \eqref{tamed} set $N=1$ and suppose $U_0$ grows polynomially as $x^{q+1}$ (see condition \eqref{cond_poly}); then the derivative of $U_0$ grows like $x^q$. So, morally, what we are doing is dividing by $(1+ \delta |U_0'(x)|)$; that is, we are applying a sort of `Newton correction' to the drift, hence the reason why we also refer to the scheme as {\em Newton-tamed Euler}.   
\item Regarding the choice of constant $\alpha$, in Figure  \ref{fig_tamed} we  illustrate  how the behaviour of the truncated tamed Euler scheme \eqref{tamed} is affected by different choices of $\a$ when the norm of the initial datum is large. While the study of the `optimal' choice  of $\alpha$ is outside of the scope of this paper,    the analysis of this section shows that  the constant $\alpha$ needs to be chosen so that the inequalities \eqref{tamed_edelta} in the proof of Proposition \ref{prop_moments_tamed} hold  (such inequalities are instrumental to proving  uniform bounds on the moments of the scheme, we come back to this in the next bullet point).  In the example considered in  Figure \ref{fig_tamed}, \eqref{tamed_edelta} does not hold for $\alpha=1$ (for that example the lower bound    appearing in \eqref{tamed_edelta} can be computed explicitly, since Assumption \ref{ass_tamed} holds for $b_0\leq 1$ and $c_0 = 1$); and indeed,  when $\alpha=1$ the scheme exhibits  steep/non-smooth behaviour, see Fig. \ref{fig_tamed}, right panel. Moreover, comparing different choices of $\alpha$, it seems that the TTE scheme is more accurate as the constant $\alpha$ is smaller, provided it satisfies \eqref{tamed_edelta}.
\item It is natural to compare the scheme \eqref{tamed} with the standard tamed Euler scheme $\{\hat J^\delta_{t_n}\}_{n\in\N}$ with time-step $\delta$ for the SDE \eqref{SDE_ito} (see \cite{hutzenthaler2012strong, brehier2020approximation}), namely with the scheme 
\begin{align}
   \hat J^\delta_{t_{n+1}}\,=\, \hat J^\delta_{t_n}\,+\, \frac{\delta\,U_0(\hat J^\delta_{t_n})}{1\,+\, \delta\,|U_0(\hat J^\delta_{t_n})|}\,+\,\sqrt{2}\sum_{k=1}^d V_k(\hat J^\delta_{t_n})\,\Delta B^k_n\ , \qquad
    \hat J^\delta_0\,=\, x\ .\label{standard_tamed}
\end{align}

The basic idea for the choice of the drift $\hat{U}^\delta_0(x):=\frac{U_0(x)}{1+\delta|U_0(x)|}$ in \eqref{standard_tamed} is that for any given $\delta>0$ the  drift $\hat{U}^\delta_0$ is bounded even if $U_0$ is not, and it converges to $U_0$ as $\delta$ goes to zero. In contrast, the drift in \eqref{tamed} is Lipschitz and grows linearly. Related to this, if we consider our running example $U_0(x) = -x^3-x$, it is easy to check that, for an appropriate choice of $\alpha$,  $|x|^2$ is a Lyapunov function for the scheme \eqref{tamed} (which implies that the second moment of the scheme is bounded){\footnote{To carry out this calculation observe that in this simple case, by taking $d=1$ and $V_1(x)=1$,  the generators for the scheme \eqref{tamed} and \eqref{standard_tamed} are given, respectively,  by $\mathcal L_{TTE} = -\frac{(x^3+x)}{1+\alpha \delta x^2} \partial_x +\partial_{xx} $ and $\mathcal L_C = -\frac{(x^3+x}{1+\alpha \delta |x^3+x|} \partial_x +\partial_{xx}$, respectively, and recall that a function $f=f(x)$ is a Lyapunov function for a generator $\mathcal L$ if there exist constants $a>0,b\geq 0$ such that $\mathcal L f\leq - af+b$.} } but it can never be a Lyapunov function for  \eqref{standard_tamed};  in general the function $|x|^q$  is a Lyapunov function for the scheme \eqref{tamed} but not for \eqref{standard_tamed}. Of course this does not prove that it is not possible to establish uniform in time control on the moments of the classical tamed scheme. However we observe that, consistently, in \cite{brehier2020approximation} the author proves  moment bounds for the tamed Euler scheme \eqref{standard_tamed} that hold with at most polynomial dependence with respect to the time horizon, and applies this result to prove weak error bounds that depend polynomially on time. We do not know how to  improve these error bounds for standard tamed.\\
Our proof of the moment bounds for TTE substantially uses a Lyapunov argument (at least for the second moment). Because of the observations we have just made, that proof clearly won't work for the standard tamed scheme. 
\end{itemize}
\end{note}

\begin{figure}
\begin{center}
\includegraphics[width=0.45\textwidth]{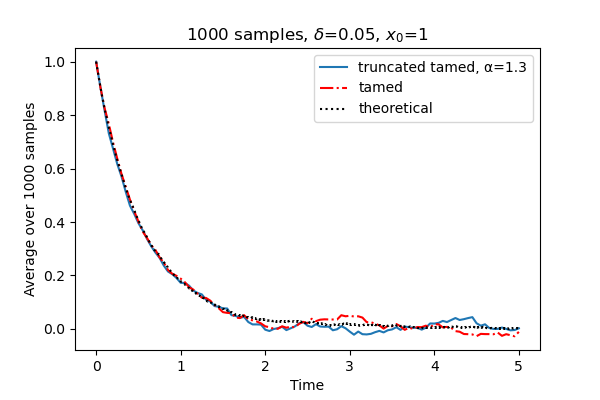}\quad\includegraphics[width=0.45\textwidth]{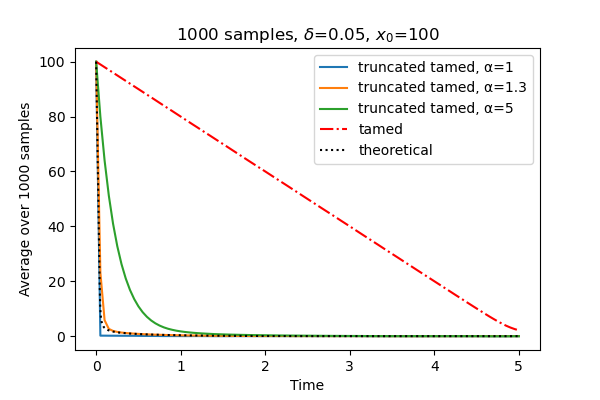}
\end{center}
\caption{\label{fig_tamed} Illustration of standard tamed Euler \eqref{standard_tamed} and truncated tamed Euler schemes \eqref{tamed} (for different constants, namely $\a=1, 1.3, 5$) with step $\delta=0.05$.  We plot  time-approximations of $\E[x_t]$, with $x_t$ solution of $dx_t = -(x^3+x)dt+dB_t$, with initial state $x_0=1$ (left) and $x_0=100$ (right), taking averages over 1000 samples. The solution is not explicitly computable so the benchmark  `theoretical line' is obtained using the standard tamed Euler scheme \eqref{standard_tamed} with $\delta=5\times 10^{-4}$ and using $10^4$ samples. For small initial data the choice of $\alpha$ in TTE does not matter and all the schemes behave substantially in the same way; for large initial data $\alpha$ in TTE needs to be chosen compatibly with \eqref{tamed_edelta},  see Note \ref{note:section5}. The choice $\alpha=1$ violates \eqref{tamed_edelta}, giving rise to the incorrect non-smooth behaviour of the right panel (blue line).  Within the  range of $\alpha$ satisfying \eqref{tamed_edelta}, TTE seems to always outperform the standard tamed scheme.}
\end{figure}

\begin{prop}[Time-uniform second moment bound]\label{prop_moments_tamed}
Under Assumption \ref{ass_tamed}, there exists $\a>0$ such that for every given initial datum $x\in\R^N$ and for $\delta\in(0,\delta_c)$ with $\delta_c>0$ small enough, the truncated tamed Euler scheme \eqref{tamed} with constant $\a$ satisfies the following,
    \begin{equation*}
       \sup_{n\in\N} \E_x\left[\left|J^\delta_{t_n}\right|^{2k} \right]\,\leq\, |x|^{2k}\,+\, C_k \ ,
    \end{equation*}
for every $k\in\N$, for some constant $C_k>0$ independent of $\delta$ and $x$.
\end{prop}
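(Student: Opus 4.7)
The plan is to derive, for each $k\in\mathbb N$, a one-step Foster--Lyapunov inequality of the form
\begin{equation*}
    \E\bigl[|J^\delta_{t_{n+1}}|^{2k}\,\big|\,J^\delta_{t_n}\bigr]\,\leq\, |J^\delta_{t_n}|^{2k}\bigl(1-\eta_k\,\delta\bigr) \,+\, C_k'\,\delta\ ,
\end{equation*}
with $\eta_k>0$ and $C_k'>0$ independent of $\delta$ and of the initial state $x$, and then iterate it to obtain $\sup_n \E_x[|J^\delta_{t_n}|^{2k}] \leq |x|^{2k} + C_k'/\eta_k$. I would proceed by induction on $k$.

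For the base case $k=1$, abbreviate $D_\delta(x):=1+\delta\alpha|x|^q$ and condition on $\mathcal F_n$ using the explicit form of \eqref{tamed}. Since $\Delta B^k_n$ has mean zero and variance $\delta$, and since the increments across the different Brownian motions are independent, one obtains
\begin{equation*}
    \E\bigl[|J^\delta_{t_{n+1}}|^{2}\,\big|\,J^\delta_{t_n}=x\bigr]\,=\,|x|^2\,+\,\frac{2\delta\,\langle U_0(x),x\rangle}{D_\delta(x)}\,+\,\frac{\delta^2|U_0(x)|^2}{D_\delta(x)^2}\,+\,2\delta\sum_{k=1}^d|V_k(x)|^2.
\end{equation*}
The Lyapunov hypothesis \eqref{tamed_Lyap} controls the first drift contribution by $-\,\frac{2\delta b_0|x|^{q+2}}{D_\delta(x)}+\frac{2\delta b_1}{D_\delta(x)}$; \eqref{Vk_bounded} controls the diffusion contribution by $2\delta K$; and \eqref{cond_poly} applied with $y=0$ yields $|U_0(x)|^2\leq C_1(1+|x|^{2q+2})$ for some $C_1>0$. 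The key algebraic observation (this is the inequality \eqref{tamed_edelta} alluded to in Note \ref{note:section5}) is the pointwise bound
\begin{equation*}
    \frac{\delta^2|x|^{2q+2}}{D_\delta(x)^2}\,=\,\frac{\delta|x|^{q+2}}{\alpha\,D_\delta(x)}\cdot\frac{\delta\alpha|x|^q}{D_\delta(x)}\,\leq\, \frac{\delta|x|^{q+2}}{\alpha\,D_\delta(x)}\ ,
\end{equation*}
which allows the potentially dangerous squared-drift term to be absorbed into a fraction of the negative Lyapunov term provided $\alpha$ is taken large enough so that $2b_0-C_1/\alpha\geq b_0$, i.e.~$\alpha\geq C_1/b_0$. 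Collecting everything yields
\begin{equation*}
    \E\bigl[|J^\delta_{t_{n+1}}|^{2}\,\big|\,J^\delta_{t_n}=x\bigr]\,\leq\, |x|^2\,-\,\frac{\delta b_0|x|^{q+2}}{D_\delta(x)}\,+\,C\,\delta\ ,
\end{equation*}
for $C=C(b_0,b_1,C_1,K)$ independent of $x$ and $\delta$. Next, I would observe that for $|x|^q\geq 1$ and $\delta\alpha\leq 1$ the ratio $\frac{b_0|x|^q}{D_\delta(x)}$ is bounded below by $b_0/2$, so the drift contribution is at least $\frac{\delta b_0}{2}|x|^2$; for $|x|^q\leq 1$ one trivially drops the negative term and adds the constant $\delta b_0/2$ to compensate. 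This produces the Foster--Lyapunov inequality $\E[|J^\delta_{t_{n+1}}|^2\,|\,J^\delta_{t_n}]\leq |J^\delta_{t_n}|^2(1-\delta b_0/2)+\delta(C+b_0/2)$, and iterating gives $\sup_n\E_x[|J^\delta_{t_n}|^2]\leq |x|^2+C_1$ with $C_1 = 2(C+b_0/2)/b_0$.

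For the inductive step, assuming the result for all $j<k$, I would expand $|J^\delta_{t_{n+1}}|^{2k}$ via the binomial theorem around $|J^\delta_{t_n}|^2$:
\begin{equation*}
    |J^\delta_{t_{n+1}}|^{2k}\,=\,\sum_{j=0}^{k}\binom{k}{j}\bigl(|J^\delta_{t_n}|^2\bigr)^{k-j}\bigl(2\langle J^\delta_{t_n},\Delta_n\rangle+|\Delta_n|^2\bigr)^j\ ,
\end{equation*}
with $\Delta_n:=J^\delta_{t_{n+1}}-J^\delta_{t_n}$. The $j=0$ term contributes exactly $|x|^{2k}$. The $j=1$ term, after conditioning, reproduces the $k=1$ calculation multiplied by $k|x|^{2(k-1)}$ and yields, for $\alpha\geq C_1/b_0$, the contribution $-\frac{k\delta b_0|x|^{q+2}}{D_\delta(x)}|x|^{2(k-1)}+kC\delta|x|^{2(k-1)}$, which gives $-\eta_k\delta|x|^{2k}$ on $|x|\geq 1$ for some $\eta_k>0$. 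The terms with $j\geq 2$ require more bookkeeping: splitting $\Delta_n$ into its drift and diffusion parts, using Gaussianity of the $\Delta B^k_n$'s (so only even powers survive), and using the uniform bound $\frac{\delta|U_0(x)|}{D_\delta(x)}\leq \frac{|x|}{\alpha}+O(\delta^{1/2})$ derived from \eqref{cond_poly}, one shows
\begin{equation*}
    \binom{k}{j}|x|^{2(k-j)}\,\E\bigl[(2\langle x,\Delta_n\rangle+|\Delta_n|^2)^j\,\big|\,J^\delta_{t_n}=x\bigr]\,\leq\, \widetilde C_{k,j}\,\delta\,\bigl(|x|^{2k}+|x|^{2k-2}+\dots+1\bigr)\ ,
\end{equation*}
with $\widetilde C_{k,j}$ that can be made a fraction of $\eta_k$ by choosing $\alpha$ sufficiently large. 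Combining the $j\geq 1$ contributions and bounding the lower-order powers $|x|^{2\ell}$, $\ell<k$, by the inductive hypothesis $\sup_n\E_x[|J^\delta_{t_n}|^{2\ell}]\leq |x|^{2\ell}+C_\ell$ after taking unconditional expectations, I obtain a recursion of the same Foster--Lyapunov form as in the $k=1$ step, which iterates to the stated bound with $C_k$ independent of $\delta$ and $x$.

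The main technical obstacle is the $j\geq 2$ bookkeeping and, more fundamentally, verifying that the squared-drift quantity $\frac{\delta^2|U_0|^2}{D_\delta^2}$ is dominated by a fraction (strictly less than one) of $\frac{\delta\langle -U_0,x\rangle}{D_\delta}$ uniformly in $x$. This is precisely where the choice of $\alpha$ enters and where the Newton-tamed form of \eqref{tamed} is essential: for the standard tamed scheme \eqref{standard_tamed}, the analogous manipulation produces a squared-drift contribution that behaves like $\frac{|U_0(x)|^2}{(1+\delta|U_0(x)|)^2}\sim \frac{1}{\delta^2}$ for $|x|$ large, which cannot be absorbed into the Lyapunov term uniformly in $|x|$, as foreshadowed in Note \ref{note:section5}. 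The rest of the argument is a routine iteration using the fact that the resulting constants in the Foster--Lyapunov inequality are $\delta$-independent, which is what guarantees that $C_k$ in the final bound is uniform in $\delta$.
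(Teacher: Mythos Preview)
Your $k=1$ argument is correct and, in fact, cleaner than the paper's: you obtain a genuine Foster--Lyapunov contraction $\E[|J^\delta_{t_{n+1}}|^2\mid J^\delta_{t_n}]\leq (1-\eta_1\delta)|J^\delta_{t_n}|^2+C\delta$ with $\eta_1>0$ independent of $\delta$, which iterates directly to a $\delta$-uniform bound.

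The inductive step for $k>1$, however, has a genuine gap. Your claim that the $j\ge 2$ terms in the binomial expansion satisfy
\[
\binom{k}{j}\,|x|^{2(k-j)}\,\E\bigl[(2\langle x,\Delta_n\rangle+|\Delta_n|^2)^j\,\big|\,J^\delta_{t_n}=x\bigr]\,\leq\, \widetilde C_{k,j}\,\delta\,\bigl(|x|^{2k}+\dots+1\bigr)
\]
is false. Write $\Delta_n=\mathrm{dr}+\mathrm{diff}$ with $\mathrm{dr}=\delta U_0(x)/D_\delta(x)$ deterministic and $\mathrm{diff}$ the Gaussian part, and set $A:=2\langle x,\mathrm{dr}\rangle+|\mathrm{dr}|^2$. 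As you yourself note, $|\mathrm{dr}|\leq |x|/\alpha+O(\delta)$, so for large $|x|$ one has $|A|\asymp |x|^2/\alpha$ \emph{uniformly in $\delta$}. Hence the purely deterministic contribution $\binom{k}{j}|x|^{2(k-j)}A^j$ is of order $|x|^{2k}/\alpha^j$, not $O(\delta)$; these $O(1)$ terms cannot be absorbed into the $j=1$ term, which \emph{is} $O(\delta)$. The one-step inequality $\E[|J^\delta_{t_{n+1}}|^{2k}\mid J^\delta_{t_n}]\leq (1-\eta_k\delta)|J^\delta_{t_n}|^{2k}+C_k'\delta$ therefore does not follow from this decomposition. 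A related symptom is that your argument appears to require $\alpha$ to grow with $k$, whereas the proposition demands a single $\alpha$ valid for all $k$.

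The paper avoids this by expanding around the drift-shifted point rather than around $x$: one first bounds the deterministic quantity $\bigl|x+\mathrm{dr}\bigr|^2\leq \varepsilon_\delta|x|^2+C\delta$ with $\varepsilon_\delta<1$ (this is exactly your $k=1$ computation), and then writes $|J^\delta_\delta|^{2k}=\bigl|(x+\mathrm{dr})+\mathrm{diff}\bigr|^{2k}$ and expands binomially in the \emph{small} Gaussian increment $\mathrm{diff}$, whose moments are $O(\delta^{\ell/2})$. In this organisation every term beyond the leading one is genuinely small in $\delta$, and the recursion closes with the same $\alpha$ for every $k$. Equivalently, in your expansion you should resum the deterministic pieces first, $\sum_{j}\binom{k}{j}|x|^{2(k-j)}A^j=(|x|^2+A)^k=|x+\mathrm{dr}|^{2k}$, and only then estimate; handling the Gaussian cross-terms afterwards is routine.
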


\begin{proof}
    First consider the case $k=1$.  We consider the case $|x|\leq 1$ and $|x|>1$ separately. If $|x|\leq 1$, by the definition of the scheme \eqref{tamed} we simply have
    \begin{align*}
        \E_x\left[\left|J^\delta_{\delta}\right|^2 \right]\,\leq\, \tilde{C}_0\ ,
    \end{align*}    
    for some constant $\tilde{C}_0>0$. Now, let  $|x|>1$. Analogously to what we have done in \eqref{interpolant}-\eqref{freezedL},  consider the continuous time interpolant $J_t^{\delta}$ of the scheme \eqref{tamed} (we don't rewrite the full interpolant and below use analogous notation to the one in \eqref{freezedL}). From Ito formula, for $0\leq t \leq \delta $ we then obtain:

\begin{align*}
    \varphi(J_t^{\delta}) = \varphi(x) + \int_0^t \mathcal L_{(x)} \varphi(J_s^{\delta}) ds 
    + \sqrt{2} \int_0^t \sum_{k=1}^d \sum_{i=1}^N V_k^i(x) \partial_i \varphi(J_s^{\delta}) dB_s^k \,.
\end{align*}
Using the above with $\varphi(x)=|x|^2$ and for $t=\delta$, we have
\begin{align*}
    |J_{\delta}^{\delta}|^2 & = |x|^2 +2 
    \int_0^{\delta} \frac{\langle U_0(x), J_s^{\delta} \rangle}{1+\delta \alpha |x|^q} ds+ 
    2\int_0^{\delta} \sum_k^d\sum_{i,j}^N
    V_k^i(x) V_k^j(x) ds \\  
    &+ \sqrt{2} \int_0^{\delta} \sum_{k}^d \sum_{i}^N 2 V_k^i (J_s^{\delta})^i  dB_s^k \,.
\end{align*}
From \eqref{Vk_bounded}, and by letting $I_{\delta}$ be the martingale term on the second line of the above, we then have
\begin{align}\label{use1Michela}
    |J_{\delta}^{\delta}|^2 & \leq |x|^2 +2 
    \int_0^{\delta} \frac{\langle U_0(x), J_s^{\delta}-x \rangle}{1+\delta \alpha |x|^q} ds+  2 
    \int_0^{\delta} \frac{\langle U_0(x), x \rangle}{1+\delta \alpha |x|^q} ds + C \delta + I_{\delta},  
\end{align}
    where $C$ is a generic constant independent of $x$ and ${\delta}$. Since $0\leq s \leq \delta$, from the definition of the interpolant $J_t^{\delta}$, we have 
    $$
    \mathbb E |J_s^{\delta}-x| \leq \frac{\delta U_0(x)}{1+\delta \alpha |x|^q} +C\delta  \,.$$  
    Hence, taking expectation on both sides of \eqref{use1Michela},  using the above and condition \eqref{tamed_Lyap}, we obtain
    \begin{align}\label{use2Michela}
       \mathbb E |J_{\delta}^{\delta}|^2 & \leq |x|^2  - \frac{2b_0 |x|^{q+2}}{1+\delta \alpha |x|^q} \delta + \frac{ 2b_1}{1+\delta \alpha |x|^q}\delta + 2 \frac{|U_0(x)|}{1+\delta \alpha |x|^q} \frac{\delta |U_0(x)| +C\delta}{1+\delta \alpha |x|^q}\delta + C\delta \,.
    \end{align}
 Finally,   by condition \eqref{cond_poly}, we know that there exist constants $c_0,c_1>0$ such that
    \begin{equation} \label{tamed_bound}
        \left| U_0(x)\right|^2\,\leq\,c_0 |x|^{2q+2}\,+\, c_1\,(1+|x|^{2q})  \,.
    \end{equation}
   Thus, combining \eqref{use2Michela} and \eqref{tamed_bound} we reach
    \begin{align*}\label{tamed_basic_bound}
      \mathbb E |J_{\delta}^{\delta}|^2 & \leq |x|^2   \left( 1-\frac{2b_0|x|^q}{1+\delta \alpha |x|^q} \delta + \frac{2 c_0 |x|^{2q}}{(1+\delta \alpha |x|^q)^2}\delta^2  \right) + C\delta
    \end{align*}
    
    Let
    \begin{equation*}
        g_\delta(x) :=   1-\frac{2b_0|x|^q}{1+\delta \alpha |x|^q} \delta + \frac{2 c_0 |x|^{2q}}{(1+\delta \alpha |x|^q)^2}\delta^2 
    \end{equation*}
As $|x|$ tends to infinity the function $g_{\delta}$ tends to the value $a= 1-2b_0/\alpha +2c_0/\alpha^2$ (which is independent of $\delta$). Moreover, it can be seen that upon choosing $\alpha$ so that
\begin{equation}\label{tamed_edelta}
        0<1+\frac{2c_0}{\alpha^2} - \frac{2b_0}{\alpha}<1
    \end{equation}
  that is
  $$
  \alpha>\max\left\{\frac{c_0}{b_0}, 2b_0-\frac{2c_0}{\alpha}\right\}, 
  $$
 one has  $0<g_{\delta}\leq 1$ for every $x, \delta$. Since the function is symmetric about the $y$ axis it needs only be studied for $x$ say positive, which is what we do next. 
With some help from matlab (or with some caluclation) one can see that the global maximum of the function is attained only for $x=0$, where $g_{\delta}(0)=1$. For $x>0$,  depending on the value of $a$ and of the various parameters,  there are two cases. Either $g_{\delta}$ is monotonically decreasing towards $a$ or, after decreasing towards its global minimum, the function becomes monotonically increasing towards $a$.  In the former case we can bound $g_{\delta}$ from above with a constant $0<\e=1-a<1$, independent of $\delta$ (and it seems to us that, upon choosing $\alpha$ appropriately, one should always be able to be in this scenario). In the latter case, since we are really only interested in the behaviour of $g_{\delta}$ for $x>1$, we upper bound $g_{\delta}$ with $g_{\delta}(x)\leq \max\{g_{\delta}(1), a\} = \e_{\delta}$. We therefore obtain    
    \begin{equation*}
\E_x\left[\left|J^\delta_{\delta}\right|^2 \right]\,\leq\, \max\left\{\tilde{C}_0,\, \tilde\e |x|^2\,+\,(C+K)\delta \right\}\, 
    \end{equation*}
    where $\tilde \e$ is either $\e$ or $\e_{\delta}$. 
    By a standard recursion argument we obtain 
    \begin{align*}
\sup_{n\in\N}\E_x\left[\left|J^\delta_{n\delta}\right|^2 \right]\,
 \leq \max\left\{\tilde{C}_0,\, |x|^2 \right\} \,+\, (C+K) \frac{\delta}{1-\tilde \e}   \,.
    \end{align*}
The case $k>1$ can be dealt with similarly (and with a little combinatorics, analogous to the one in Lemma \ref{lemma_alternative}), so we omit detail. 
\end{proof}

 \begin{thm}[UiT weak convergence for truncated tamed Euler schemes]\label{thm_weak_tamed}
Consider the truncated tamed Euler scheme $J^\delta_{t_n}$ \eqref{tamed} with step $\delta>0$ for the solution $x_t$ of the SDE \eqref{SDE_ito}. Under Assumption \ref{ass_tamed}, $J^\delta_{t_n}$ satisfies conditions \eqref{cond_Euler} and \eqref{cond_bound_Phi}. 

If in addition Assumption \ref{ass_expdecay2} holds, then condition \eqref{cond_SDE} is satisfied as well, and there exists a constant $K>0$ such that
\begin{equation}\label{tamed_UIT}
\sup_{n\in\N} \,\left| \E_x\left[ f(x_{t_n})\right]-\E_x\left[f\left(J^{\delta}_{t_n}\right)\right] \right|\,\leq\, K\|f\|_{\CC^2_b}\,\left(\delta\,|x|^{2q}\,+\,\delta^{1/2}\right)\ ,
\end{equation}
for any $f\in\CC^2_b(\R^N)$ and $\delta>0$ small enough. 
\end{thm}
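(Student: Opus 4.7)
The plan is to apply Proposition \ref{prop_weak}, so we only need to verify its three hypotheses for the TTE scheme $J^\delta_{t_n}$. Since Assumption \ref{ass_expdecay2} is in force by hypothesis, Lemma \ref{lemma_expdecay2} immediately yields the strong exponential stability \eqref{cond_SDE} for the semigroup $\PP_t$ associated to \eqref{SDE_ito}. It remains to establish the local consistency \eqref{cond_Euler} and the uniform a-priori control \eqref{cond_bound_Phi}.

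For the local consistency I would follow the template of Proposition \ref{prop_EM}. Let $\{\tilde J^\delta_t\}_{t\in[0,\delta]}$ be the continuous-time interpolant of TTE on the first step, namely the solution of
\begin{equation*}
d\tilde J^\delta_t=\frac{U_0(x)}{1+\alpha\delta|x|^q}\,dt+\sqrt{2}\sum_{k=1}^d V_k(x)\,dB^k_t,\qquad \tilde J^\delta_0=x,
\end{equation*}
so that $\tilde J^\delta_\delta$ coincides in law with $J^\delta_\delta$. Applying Itô's formula to $s\mapsto (\PP_{\delta-s}f)(\tilde J^\delta_s)$ and mimicking the derivation of \eqref{25} yields
\begin{equation*}
\E_x[f(J^\delta_\delta)-f(x_\delta)]=\E_x\!\left[\int_0^\delta\bigl(\LL^{\mathrm{tame},\delta}_{(x)}-\LL_{(\tilde J^\delta_s)}\bigr)\PP_{\delta-s}f(\tilde J^\delta_s)\,ds\right],
\end{equation*}
where $\LL^{\mathrm{tame},\delta}_{(x)}$ denotes the generator of the interpolant with coefficients frozen at $x$. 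Splitting the generator difference as
\begin{equation*}
\LL^{\mathrm{tame},\delta}_{(x)}-\LL_{(\tilde J^\delta_s)}=\bigl(\LL^{\mathrm{tame},\delta}_{(x)}-\LL_{(x)}\bigr)+\bigl(\LL_{(x)}-\LL_{(\tilde J^\delta_s)}\bigr),
\end{equation*}
the second summand (a standard Euler-type error) is handled exactly as in Proposition \ref{prop_EM}, now using the polynomial Lipschitz bound \eqref{cond_poly} in place of the global Lipschitz bound, together with the simple one-step estimate $\E_x|\tilde J^\delta_r-x|^2\leq C\bigl(\delta^2(1+|x|^{2q+2})+\delta\bigr)$ obtained from the SDE \eqref{SDE_ito} driving $\tilde J^\delta$. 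The first summand, the \emph{taming correction}, is controlled via the identity
\begin{equation*}
\frac{U_0(x)}{1+\alpha\delta|x|^q}-U_0(x)=-\frac{\alpha\delta|x|^q}{1+\alpha\delta|x|^q}\,U_0(x),
\end{equation*}
combined with the polynomial growth $|U_0(x)|\leq C(1+|x|^{q+1})$ (which follows from \eqref{cond_poly} specialised to $y=0$) and the SES-derived control $\|\nabla \PP_t f\|_\infty\leq K_0\|f\|_{\CC^2_b}$. Both contributions integrated over $[0,\delta]$ combine into a bound of the form \eqref{cond_Euler} with $\phi(x,\delta)=\delta^2\,p(|x|)+\delta^{3/2}$ for an explicit polynomial $p$ of degree $2q$.

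The uniform a-priori control \eqref{cond_bound_Phi} then follows directly from Proposition \ref{prop_moments_tamed}, which provides time-uniform polynomial moment bounds of every order for the TTE scheme: $\sup_n\E_x[p(|J^\delta_{t_n}|)]\leq C(1+p(|x|))$. We may therefore take $\Phi(x,\delta)=C\bigl(\delta^2(1+p(|x|))+\delta^{3/2}\bigr)$. Plugging the three ingredients into Proposition \ref{prop_weak} and absorbing one factor of $\delta$ into the denominator $1-e^{-\lambda\delta}\geq c\delta$ produces the uniform-in-time weak error \eqref{tamed_UIT}. I expect the main technical obstacle to be the careful bookkeeping of the polynomial-in-$|x|$ prefactors inside the taming correction: unlike in the globally Lipschitz setting of Section \ref{section_EM}, the tamed drift $U_0(x)/(1+\alpha\delta|x|^q)$ does not become small as $|x|\to\infty$, so recovering the sharp exponent $|x|^{2q}$ stated in \eqref{tamed_UIT} requires delicately balancing the factor $\alpha\delta|x|^q/(1+\alpha\delta|x|^q)\in[0,1]$ against the polynomial growth of $U_0$; the rest of the argument then proceeds along the lines already developed in Sections \ref{section_EM}--\ref{section_split}.
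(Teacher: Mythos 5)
Your proposal is correct and follows essentially the same route as the paper: Proposition \ref{prop_weak} combined with Lemma \ref{lemma_expdecay2} for SES, local consistency via the continuous-time interpolant and the generator-difference identity (Lemma \ref{lemma_P-Q}) with the drift error split by triangle inequality into the taming correction $\frac{\alpha\delta|x|^q}{1+\alpha\delta|x|^q}U_0(x)$ plus a standard Euler-type term controlled by \eqref{cond_poly}, and the uniform moment bounds of Proposition \ref{prop_moments_tamed} to obtain $\Phi$. The only minor discrepancy is bookkeeping of the polynomial prefactor in $\phi$ (the paper's computation yields $\delta^2|x|^{2q+2}+\delta^{3/2}$, so the required moment bound is of order $2q+2$), which does not affect the structure of the argument.
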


\begin{proof}
By Proposition \ref{prop_weak}, if we prove that conditions \eqref{cond_SDE}, \eqref{cond_Euler} and \eqref{cond_bound_Phi} hold for the truncated tamed Euler scheme $J^\delta_{t_n}$, then \eqref{tamed_UIT} follows. The estimate \eqref{cond_SDE} holds under Assumption \ref{ass_expdecay2}, by Lemma \ref{lemma_expdecay2}. Thus, we are only left to prove that conditions \eqref{cond_Euler} and \eqref{cond_bound_Phi} are satisfied by $J^\delta_{t_n}$ under Assumption \ref{ass_tamed}. More precisely, we show that, under Assumption \ref{ass_tamed}, Assumption \ref{ass_weak} is satisfied and the statement follows by Proposition \ref{prop_weak}. That is, we want to show that condition \eqref{cond_Euler} is satisfied with
\begin{equation}\label{tamed_phi}
    \phi(x,\delta)\,:=\,\delta^2|x|^{2q+2}+\delta^{3/2}\ ,
\end{equation}
where $q\geq 2$ is given by Assumption \ref{ass_tamed}, and that
\begin{equation}\label{tamed_moments}
\sup_{n\in\N}\,\E_x\left[\left|J^\delta_{t_n}\right|^{2q+2}\right]\,\leq\, |x|^{2q+2}\,+\,C_{q}\ .
\end{equation}
Therefore, similarly to Lemma \ref{standard_eg}, condition \eqref{cond_bound_Phi} is satisfied with
\begin{equation*}
    \Phi(x,\delta)\,=\, \delta^2|x|^{2q+2}\,+\, C_{q}\,\delta^{2}\,+\,\delta^{3/2}\ .
\end{equation*}
By Proposition \ref{prop_weak}, we then obtain the statement. 

The bound \eqref{tamed_moments} is given by Proposition \ref{prop_moments_tamed}. Thus, we are left to prove \eqref{tamed_phi}. In what follows, the constant $C>0$ may change from line to line.

Denoting by $J^\delta_r$, $r\in(0,\delta)$, the continuous-time interpolant of $J^\delta_{t_n}$, i.e.
\begin{equation}\label{tamed_interpolant}
    dJ^\delta_r\,=\, \frac{U_0(x)}{1+\delta\a|x|^q}\,dr\,+\, \sum_{k=1}^d V_k(x)\, dB^k_r\ ,\qquad X^\delta_0\,=\,x\ ,
\end{equation}
applying Lemma \ref{lemma_P-Q}, we have 
\begin{align*}
   \E_x\left[f(x^\delta_\delta)-f\left(J^\delta_\delta\right)\right]\,
    =&\,\E_x\bigg[ \int_0^\delta \langle U_0(J^\delta_r)\,-\, \frac{U_0(x)}{1+\delta\alpha|x|^q},\,\nabla {\PP}^\delta_{s-r}f(J^\delta_r)\rangle\nonumber\\
    &+\,\sum_{k=1}^d \left(V_k(J^\delta_r)-V_k(x)\right)^T\,\nabla^2{\PP}^\delta_{s-r}f(J^\delta_r)\, \left(V_k(J^\delta_r)-V_k(x)\right)\,dr\bigg]\ .
\end{align*}
Note that
\begin{align*}
    \left|U_0(J^\delta_r)\,-\, \frac{U_0(x)}{1+\delta\alpha|x|^q}\right|\,&\leq\,\left|U_0(J^\delta_r)\,-\, U_0(x)\right|\,+\, \left|U_0(x)\,-\, \frac{U_0(x)}{1+\delta\alpha|x|^q}\right|\\
    &\leq \,\left|U_0(J^\delta_r)\,-\, U_0(x)\right|\,+\, \delta\alpha|x|^q\,|U_0(x)|\\
    &\leq\, C\,\left(1+|x|^{q}+|J^\delta_r|^{q}\right)\,|x-J^\delta_r|\,+\, C\,\a\delta\left(|x|^{2q+1}\,+\,1\right)\ ,
\end{align*}
by condition \eqref{cond_poly}. Thus, we obtain
\begin{align*}
    \big|\E_x\big[f\big(J^\delta_\delta\big)\,&-\,f(x^\delta_\delta)\big]\big|\\
     \leq\, &C\,\|f\|_{\CC^2_b}\int_0^\delta \E_x\left[\left(1+|x|^{q}+|J^\delta_r|^{q}\right)\,|x-J^\delta_r|\,+\, \delta\alpha\,|x|^{2q+1}+\left|J^\delta_r-x\right|^2\right]\, dr\\
     \leq\, &\begin{multlined}[t]
    C\,\|f\|_{\CC^2_b}\int_0^\delta\bigg( \delta\,|x|^{2q+1}\,+\,\left[\E_x \left( 1+|x|^{2q}+ \left|J^\delta_r\right|^{2q}\right)\right]^{1/2}\cdot\left[\E_x\left|J^\delta_r-x\right|^2\right]^{1/2}\\
     +\E_x\left[\left|J^\delta_r-x\right|^2\right]\bigg)\,dr\ ,
     \end{multlined}
\end{align*}
where the last inequality follows by Holder's inequality. By definition of the continuous-time interpolant $J^\delta_r$ \eqref{tamed_interpolant}, we have
\begin{align*}
     \E_x\left[\left|J^\delta_r-x\right|^2\right]\,&=\, \E_x\left[\left|r\,\frac{U_0(x)}{1+\delta\,\alpha |x|^q}+\sum_{k=1}^d V_k(x)\,B_r^k\right|^2\right]\\
     &\leq\, r^2\,\left|U_0(x)\right|^2\,+\, K\,r\\
     &\leq\, C\, r^2 (|x|^{2q+2}+1)\,+\,K r\ .
\end{align*}
Moreover, using again \eqref{tamed_interpolant}, we have
\begin{align*}
    \E_x\left[ 1+|x|^{2q}+ \left|J^\delta_r\right|^{2q}\right]^{1/2}\,&=\,  \E_x\left[ 1+|x|^{2q}+ \left|x+r\,\frac{U_0(x)}{1+\delta\alpha |x|^q}\,+\,\sum_{k=1}^d V_k(x)\,B^k_r\right|^{2q}\right]^{1/2}\\
    &\leq\, C\,\E_x\left[1+|x|^{2q}\,+ \,\frac{r^{2q}}{\delta^{2q}}\,|x|^{2} \,+\, \sum_{k=1}^d \left|B^k_r\right|^{2q} \right]^{1/2}\\&\leq\,C\,\left(1+|x|+|x|^{q}\right)\ ,
\end{align*}
for any $\delta\in(0,1)$ and $r<\delta$. Combining all together, we obtain
\begin{align*}
\left|\E_x\left[f\left(J^\delta_\delta\right)-f(x_\delta)\right]\right|\,&\leq\, C\,\delta^2\,|x|^{2q+1}\,+\, C\left(1+|x|+|x|^{q}\right)\int_0^\delta \left(r\,|x|^{q+1}+r^{1/2}\right)\,dr\\
&\leq\,C\delta^2\,|x|^{2q}\,+\,C\left(1+|x|+|x|^{q}\right)\cdot\left(\delta^2\,|x|^{q+1}+\delta^{3/2}\right)\\
&\leq\,C\,\left(\delta^2|x|^{2q+2}+\delta^{3/2}\right)\ ,
\end{align*}
by applying again \eqref{tamed_bound}. This gives \eqref{tamed_phi} and concludes the proof.

\end{proof}
\bigskip

{\bf Acknowledgments.} M. Ottobre and L. Angeli gratefully acknowlegde the support  of the Leverhulme grant RPG--2020--095. 
The authors are extremely grateful to an anonymous referee who read the manuscript extremely carefully, giving plenty of useful feedback and spotting a number of typos and to Can Huang (Xiamen University) who found a mistake in an earlier version of the proof of Proposition 5.3. 
\appendix
\section{Proofs of exponential decay}\label{appendix_expdecay}

Given a $(N\times N)$-matrix $B$ and a function $A:\R^N\to \R^{N\times N}$, we denote by $\|B\|:=\sqrt{\sum_{i,j}|b_{ij}|^2}$ the Frobenius norm, and, with a slight abuse of notation, $\|A\|:=\sup_{x\in\R^N}\|A(x)\|$. Moreover, we define the seminorm
\begin{equation*}
    \|F^\delta\|_{\CC^2_b}\,=\, \sup_{x\in\R^N} \left(\left(\sum_{i=1}^N \left|\partial_i F^\delta(x)\right|^2\right)^{1/2}\,+\, \left(\sum_{i,j=1}^N \left|\partial_j\partial_i F^\delta(x)\right|^2\right)^{1/2}\right).
\end{equation*}
In some of the proofs we also use the 2-norm of a matrix, namely $\|B\|_2 :=\sup_{v\neq 0} \frac{|Bv|}{|v|}$ (recall that $\|B\|_2\leq \|B\|$).

\begin{proof}[Proof of Lemma \ref{lemma_expdecay1}]
We adapt the proof of \cite[Proposition 4.5]{crisan2022poisson}.

Fix $f\in\CC^2_b(\R^N)$. In order to prove the statement, it is enough to show that there exists a constant $\gamma>0$ such that
\begin{equation}\label{cond_nabla}
    \partial_s \PP_{t-s}|\nabla \PP_s f(x)|^2\leq -2\gamma\, \PP_{t-s} |\nabla \PP_s f(x)|^2\ ,
\end{equation}
for any $0\leq s\leq t$ and $x\in\R^N$.

Indeed, this gives 
\begin{equation*}
    |\nabla \PP_t f(x)|^2\leq e^{-2\gamma t}\, \PP_{t} |\nabla f(x)|^2\ ,
\end{equation*}
by Gronwall's lemma. By the positivity property of Markov semigroups, 
\begin{equation*}
    \sup_{t\geq 0}\,\sup_{x\in\R^N}\PP_{t} |\nabla f(x)|^2\,\leq\, \|\nabla f\|^2_{\infty}\ ,
\end{equation*}
the statement then follows. 

So, we are left to prove \eqref{cond_nabla}. Writing
\begin{equation*}
    \partial_s \PP_{t-s}|\nabla \PP_s f(x)|^2 \,=\, \E_x\left[\partial_s |\nabla \PP_s f(x_{t-s})|^2-\LL |\nabla \PP_s f(x_{t-s})|^2\right]\ ,
\end{equation*}
where $\LL$ and $x_t$ are respectively the generator and the process associated with the semigroup $\PP_t$, we see that \eqref{cond_nabla} holds if 
\begin{equation}\label{bound_gamma}
    \partial_s |\nabla \PP_s f(x)|^2-\LL |\nabla \PP_s f(x)|^2\,\leq\, -2\gamma\,  |\nabla \PP_s f(x)|^2\ ,
\end{equation}
for every $x\in\R^N$ and $s\geq 0$. Expanding the terms at the LHS, we can write
\begin{align}
    \partial_s |\nabla \PP_s f(x)|^2\,&=\,\partial_s\left(\sum_{i=1}^N |\partial_i\PP_s f(x)|^2\right) \nonumber  \\ &=\,2\sum_{i=1}^N \left(\partial_i \PP_s f(x)\right)\,\left(\partial_i \LL \PP_s f(x)\right),\label{partial_s_nabla}
\end{align}
with
\begin{align*}
    \partial_i \LL \PP_s f(x)\,=\, \langle \partial_i U_0(x),\,\nabla \PP_s f(x)\rangle+\langle  U_0,\,\nabla \partial_i \PP_s f(x)\rangle
    +\,\partial_i \sum_{k=1}^d V^T_k(x) \left(\nabla^2 \PP_s f(x)\right)V_k(x)\ ,
\end{align*}
and
\begin{align}\label{LL_nabla}
    \LL |\nabla \PP_s f(x)|^2\,&=\, \langle U_0(x),\,\nabla \left( |\nabla \PP_s f(x)|^2\right)\rangle\,+\,\sum_{k=1}^d V^T_k(x) \left(\nabla^2 \left(|\nabla \PP_s f(x)|^2\right)\right)V_k(x)\ ,
\end{align}
where
\begin{align*}
   \langle U_0(x),\,\nabla \left( |\nabla \PP_s f(x)|^2\right)\rangle\,&=\, \langle U_0(x),\,\nabla \left( \sum_{i=1}^N |\partial_i\PP_s f(x)|^2\right)\rangle \\
   &=\, 2\sum_{i=1}^N \left(\partial_i \PP_s f(x)\right)\,\langle  U_0(x),\,\nabla \partial_i \PP_s f(x)\rangle\ ,
\end{align*}
and
\begin{align*}
    \nabla^2 \left(|\nabla \PP_s f(x)|^2\right)\,&=\,\nabla^2 \left(\sum_{i=1}^N |\partial_i\PP_s f(x)|^2\right)  \\
    &=\,2\sum_{i=1}^N \left(\nabla \partial_i \PP_s f(x)\right)\,\otimes\,\left(\nabla \partial_i \PP_s f(x)\right)\,+\,2\sum_{i=1}^N \left(\partial_i \PP_s f(x)\right)\,\nabla^2 \partial_i \PP_s f(x)\ .
\end{align*}
Therefore, combining \eqref{partial_s_nabla} and \eqref{LL_nabla}, we have
\begin{align}
    \partial_s |\nabla \PP_s f(x)|^2& - \LL  |\nabla \PP_s f(x)|^2\nonumber \\
    =\,&2\sum_{i=1}^N \left(\partial_i \PP_s f(x)\right)\, \langle \partial_i U_0(x),\,\nabla \PP_s f(x)\rangle-\,2\sum_{k=1}^d\sum_{i=1}^N \langle \nabla \partial_i \PP_s f(x),\,V_k(x) \rangle^2\nonumber\\
    &+\,4\sum_{i=1}^N\sum_{k=1}^d \left(\partial_i \PP_s f(x)\right)\,\left( V^T_k(x) \left(\nabla^2 \PP_s f(x)\right)\partial_i V_k(x)\right)\ .\label{ds-L}
\end{align}

The first term in the decomposition \eqref{ds-L} is simply
\begin{align*}
2\sum_{i=1}^N \left(\partial_i \PP_s f(x)\right)\, \langle \partial_i U_0(x),\,\nabla \PP_s f(x)\rangle\,=\,2\,\nabla \PP_s f(x)\, \nabla U_0(x)\,\nabla \PP_s f(x)\ .
\end{align*}
Moreover, using the basic fact
\begin{equation*}
    u^T \nabla^2 \PP_s f(x) v\,=\, \sum_{j=1}^N \left( u^T  \nabla^2 \PP_s f(x)\right)_j v_j\,=\, \sum_{j=1}^N  u^T  \partial_j\nabla \PP_s f(x) v_j\,=\, \sum_{j=1}^N \langle u, \partial_j \nabla\PP_s f(x)\rangle\, v_j\ ,
\end{equation*}
for every $u,v\in\R^N$, we can rewrite the last term in \eqref{ds-L} as
\begin{align*}
    4\sum_{i=1}^N&\sum_{k=1}^d  V^T_k(x)\left(\nabla^2 \PP_s f(x)\right)\left(\partial_i \PP_s f(x)\,\partial_i V_k(x)\right)\\
    &=\,4\sum_{i,j=1}^N\sum_{k=1}^d \langle V_k(x),\,\partial_j\nabla\PP_s f(x) \rangle\,\left(\partial_i \PP_s f(x)\,\partial_i V_k(x)\right)_j\\
    &\leq\,2\sum_{i,j=1}^N\sum_{k=1}^d \left(\frac{1}{N}\langle V_k(x),\,\partial_j\nabla\PP_s f(x) \rangle^2\,+\,N\left(\partial_i \PP_s f(x)\,\partial_i V_k(x)\right)_j^2\right)\ ,
\end{align*}
where the last step follows by Young's inequality.
Writing
\begin{equation*}
    2\sum_{i,j=1}^N\sum_{k=1}^d \frac{1}{N}\langle V_k(x),\,\partial_j\nabla\PP_s f(x) \rangle^2\,=\, 2\sum_{j=1}^N\sum_{k=1}^d \langle V_k(x),\,\partial_j\nabla\PP_s f(x) \rangle^2\ ,
\end{equation*}
and
\begin{align*}
    2\sum_{i,j=1}^N\sum_{k=1}^d N\left(\partial_i \PP_s f(x)\,\partial_i V_k(x)\right)_j^2\,=\, 2N\, \sum_{i=1}^N\sum_{k=1}^d \left|\partial_i \PP_s f(x)\,\partial_i V_k(x)\right|^2\ ,
\end{align*}
we obtain that \eqref{ds-L} is bounded by
\begin{multline}\label{need_it_later}
     \partial_s |\nabla \PP_s f(x)|^2 -\, \LL  |\nabla \PP_s f(x)|^2\\
     \leq\,2\,\left(\nabla \PP_s f(x)\right)^T \nabla U_0(x) \left(\nabla \PP_s f(x)\right)\,+\,2N\, \sum_{i=1}^N\sum_{k=1}^d \left|\partial_i \PP_s f(x)\,\partial_i V_k(x)\right|^2\ .
\end{multline}
Hence, applying condition \eqref{cond_nablaU0} and condition \eqref{cond_expdecay_V}, we obtain
\begin{align*}
    \partial_s |\nabla \PP_s f(x)|^2 -\, \LL  |\nabla \PP_s f(x)|^2 \,&\leq\, -2\,\lambda(x)\,\left|\nabla \PP_s f(x)\right|^2 \, +\, 2\,\left(\lambda(x)-\gamma\right) \left|\nabla \PP_s f(x)\right|^2\\
    &=\, -2\gamma\,\left|\nabla \PP_s f(x)\right|^2\ .
\end{align*}

Namely, \eqref{bound_gamma} holds with $-2\gamma <0$, and this concludes the proof.

\end{proof}

\begin{proof}[Proof of Lemma \ref{lemma_expdecay2}]
First, note that if we prove
\begin{align}\label{second_D}
     |\nabla \PP_t f(x)|^2+c\,\|\nabla^2 \PP_t f(x)\|^2\,\leq\, e^{-\sigma t} \PP_{t} \left(|\nabla f(x)|^2+c\,\|\nabla^2 f(x)\|^2\right),
\end{align}
for some $c>0$, it ensures the following bound,
\begin{equation*}
    c\,\|\nabla^2 \PP_t f(x)\|^2\leq \max\{1,c\}\, e^{-\sigma t} \PP_{t} \left(|\nabla f(x)|^2+\|\nabla^2 f(x)\|^2\right)\ .
\end{equation*}
Hence, by the positivity property of Markov semigroups, 
\begin{equation*}
    \sup_{t\geq 0}\,\sup_{x\in\R^N} \PP_{t} \left(|\nabla f(x)|^2+\|\nabla^2 f(x)\|^2\right)\,\leq\, \|\nabla f\|^2_{\infty}+\|\nabla^2 f\|^2_{\infty}\ ,
\end{equation*} 
this proves the statement, with $\tilde{K}_0 = \frac{\max\{1,c\}}{c}$.

    Thus we have to show \eqref{second_D}. Similarly to the proof of Lemma \ref{lemma_expdecay1}, it is enough to prove the following bound,
    \begin{multline}\label{to_prove_expdecay2}
    \partial_s \left(|\nabla \PP_s f(x)|^2+c\,\|\nabla^2 \PP_s f(x)\|^2\right)-\LL \left(|\nabla \PP_s f(x)|^2+c\,\|\nabla^2 \PP_s f(x)\|^2\right)\\
    \leq\, -\sigma  \left(|\nabla \PP_s f(x)|^2+c\,\|\nabla^2 \PP_s f(x)\|^2\right)\ ,
\end{multline}
for every $x\in\R^N$ and $s\geq 0$. Similarly to the proof of Lemma \ref{lemma_expdecay1}, we obtain \eqref{need_it_later}, namely
\begin{align}
     \partial_s |\nabla \PP_s f&(x)|^2 -\, \LL  |\nabla \PP_s f(x)|^2\nonumber\\
     &\leq\,2\,\left(\nabla \PP_s f(x)\right)^T \nabla U_0(x) \left(\nabla \PP_s f(x)\right)\,+\,2N\, \sum_{i=1}^N\sum_{k=1}^d \left|\partial_i \PP_s f(x)\,\partial_i V_k(x)\right|^2 \nonumber \\
     &\leq \,2\left((\rho-1)\,\lambda(x)-\gamma\right) \, \left|\nabla \PP_s f(x)\right|^2\ ,\label{from_previous_lemma}
\end{align}
where the latter bound follows by condition \eqref{cond_nablaU0} and condition \eqref{cond_expdecay_Vbis}.

So, we are left to bound
\begin{equation*}
    \partial_s \|\nabla^2 \PP_s f(x)\|^2-\LL \|\nabla^2 \PP_s f(x)\|^2\ .
\end{equation*}
We can write
\begin{align}
    \partial_s \|\nabla^2 \PP_s f(x)\|^2= \partial_s \left(\sum_{i,j=1}^N \left|\partial_i\partial_j \PP_s f(x)\right|^2\right) = 2\sum_{i,j=1}^N \left(\partial_i\partial_j \PP_s f(x)\right)\,\left(\partial_i\partial_j \LL \PP_s f(x)\right)\ ,\label{partial_s_nabla2}
\end{align}
with
\begin{align*}
    \partial_i\partial_j \LL \PP_s f(x)\,=\, \partial_i\partial_j \langle U_0(x),\,\nabla \PP_t f(x)\rangle \,+\,\partial_i\partial_j \sum_{k=1}^d V^T_k(x) \left(\nabla^2 \PP_t f(x)\right)V_k(x)\ .
\end{align*}

Moreover,
\begin{align}\label{LL_nabla2}
    \LL \|\nabla^2 \PP_s f(x)\|^2\,&=\, \langle U_0(x),\,\nabla \|\nabla^2 \PP_s f(x)\|^2\rangle\,+\,\sum_{k=1}^d V^T_k(x) \left(\nabla^2 \|\nabla^2 \PP_s f(x)\|^2\right)V_k(x)\ ,
\end{align}
where
\begin{align*}
   \langle U_0(x),\,\nabla \|\nabla^2 \PP_s f(x)\|^2\rangle\,&=\, \langle U_0(x),\,\nabla \left( \sum_{i,j=1}^N |\partial_j\partial_i\PP_s f(x)|^2\right)\rangle \\
   &=\, 2\sum_{i,j=1}^N \left(\partial_j\partial_i \PP_s f(x)\right)\,\langle  U_0(x),\,\nabla \partial_j\partial_i \PP_s f(x)\rangle\ ,
\end{align*}
and
\begin{align*}
    \nabla^2&\|\nabla^2 \PP_s f(x)\|^2\,=\,\nabla^2 \left(\sum_{i,j=1}^N |\partial_j\partial_i\PP_s f(x)|^2\right)  \\
    &=\,2\sum_{i=1}^N \left(\nabla \partial_j\partial_i \PP_s f(x)\right)\,\otimes\,\left(\nabla \partial_j\partial_i \PP_s f(x)\right)\,+\,2\sum_{i=1}^N \left(\partial_j\partial_i \PP_s f(x)\right)\,\nabla^2 \partial_j\partial_i \PP_s f(x)\ .
\end{align*}
Therefore, combining \eqref{partial_s_nabla2} and \eqref{LL_nabla2}, we have
\begin{subequations}
\begin{align}
    \partial_s \|\nabla^2 \PP_s f(x)\|^2& - \LL  \|\nabla^2 \PP_s f(x)\|^2\nonumber \\
    =\,&2\sum_{i,j=1}^N \left(\partial_j\partial_i \PP_s f(x)\right)\cdot \langle \partial_j\partial_i U_0(x),\,\nabla \PP_s f(x)\rangle\label{dsL_term1}\\
    &+2\sum_{i,j=1}^N \left(\partial_j\partial_i \PP_s f(x)\right)\cdot\langle\partial_i U_0(x),\,\nabla \partial_j \PP_s f(x)\rangle\label{dsL_term2}\\
    &-\,2\sum_{k=1}^d\sum_{i,j=1}^N \left(\langle \nabla \partial_j\partial_i \PP_s f(x),\,V_k(x) \rangle\right)^2\label{dsL_term3}\\
    &+\,4\sum_{i,j=1}^N\sum_{k=1}^d \left(\partial_j\partial_i \PP_s f(x)\right)\,\left( V^T_k(x) \left(\nabla^2 \PP_s f(x)\right)\partial_j\partial_i V_k(x)\right)\label{dsL_term4}\\
    &+\,4\sum_{i,j=1}^N\sum_{k=1}^d \left(\partial_j\partial_i \PP_s f(x)\right)\,\left( \partial_j V^T_k(x) \left(\nabla^2 \PP_s f(x)\right)\partial_i V_k(x)\right)\label{dsL_term5}\\
    &+\,4\sum_{i,j=1}^N\sum_{k=1}^d \left(\partial_j\partial_i \PP_s f(x)\right)\,\left( V^T_k(x) \left(\partial_j\nabla^2 \PP_s f(x)\right)\partial_i V_k(x)\right)
    \ .\label{dsL_term6}
\end{align}
\end{subequations}
First, we consider the terms \eqref{dsL_term2}, \eqref{dsL_term3} and \eqref{dsL_term6}, and we proceed in the same fashion of Lemma \ref{lemma_expdecay1}. More precisely, we can rewrite \eqref{dsL_term6} as
\begin{align*}
    4\sum_{i,j=1}^N\sum_{k=1}^d&  V^T_k(x) \left(\partial_j\nabla^2 \PP_s f(x)\right)\,\left(\partial_j\partial_i \PP_s f(x)\,\partial_i V_k(x)\right)\\    &=4\sum_{i,j,\ell=1}^N\sum_{k=1}^d\langle V_k(x),\,\partial_\ell\partial_j\nabla \PP_s f(x) \rangle \,\left(\partial_j\partial_i \PP_s f(x)\,\partial_i V_k(x)\right)_\ell  \\
    &\leq 2\sum_{i,j,\ell=1}^N\sum_{k=1}^d \left( \frac{1}{N}\langle V_k(x),\,\partial_\ell\partial_j\nabla\PP_s f(x) \rangle^2\,+\,N\left(\partial_j\partial_i \PP_s f(x)\,\partial_i V_k(x)\right)_\ell^2\right) \ ,
\end{align*}
where the last bound follows by Young's inequality. Writing
\begin{align*}
    2\sum_{i,j,\ell=1}^N\sum_{k=1}^d  \frac{1}{N}\langle V_k(x),\,\partial_\ell\partial_j\nabla\PP_s f(x) \rangle^2\,=\, 2\sum_{i,j=1}^N\sum_{k=1}^d \langle V_k(x),\,\partial_i\partial_j\nabla\PP_s f(x) \rangle^2\ ,
\end{align*}
and
\begin{align*}
    2\sum_{i,j,\ell=1}^N\sum_{k=1}^d N\left(\partial_j\partial_i \PP_s f(x)\,\partial_i V_k(x)\right)_\ell^2\, &=\, 2N\sum_{i,j=1}^N \sum_{k=1}^d  \left|\partial_j\partial_i \PP_s f(x)\,\partial_i V_k(x)\right|^2\\
    &\leq\, 2\left(\rho\,\lambda(x)-\gamma\right)\,\sum_{j=1}^N |\nabla \partial_j \PP_s f(x)|^2\ ,
\end{align*}
where the last step follows by \eqref{cond_expdecay_Vbis}, we obtain
\begin{align*}
    \eqref{dsL_term2}+\eqref{dsL_term3}+ \eqref{dsL_term6}\,\leq\, &2\sum_{j=1}^N \left(\nabla\partial_j \PP_s f(x)\right)^T\, \nabla U_0(x)\,\nabla\partial_j \PP_s f(x)\\
    &\,+\, 2\left(\rho\,\lambda(x)-\gamma\right)\,\sum_{j=1}^N |\nabla \partial_j \PP_s f(x)|^2\\
    \leq\,& -2\left((1-\rho)\,\lambda(x)+\gamma\right)\,\|\nabla^2 \PP_s f(x)\|^2\ ,
\end{align*}
by condition \eqref{cond_nablaU0}. Moreover, we have
\begin{align*}
    \eqref{dsL_term1}\,&\leq\, 2\,\left|\nabla \PP_s f(x)\right|\sum_{i,j=1}^N \left|\partial_j\partial_i \PP_s f(x)\right|\, \left| \partial_j\partial_i U_0(x)\right|   \\    
    &\leq\,\e^{-1} \sum_{i,j=1}^N \left|\partial_j\partial_i \PP_s f(x)\right|^2\, \left| \partial_j\partial_i U_0(x)\right|\,+\, \e\, \left|\nabla \PP_s f(x)\right|^2 \sum_{i,j=1}^N  \left| \partial_j\partial_i U_0(x)\right| \\
    &\leq\, \e^{-1}\,\|\nabla^2 \PP_s f(x)\|^2\,\sum_{i,j=1}^N  \left| \partial_j\partial_i U_0(x)\right|\,+\, \e\,\left|\nabla \PP_s f(x)\right|^2 \sum_{i,j=1}^N  \left| \partial_j\partial_i U_0(x)\right| \ ,
\end{align*}
for any $\e>0$, where the second bound follows by Young's inequality. Thus, by condition \eqref{cond_expdecay2_2d},
\begin{equation*}
    \eqref{dsL_term1}\,\leq\,\e^{-1}\,\a\left(1+\lambda(x)\right)\,\|\nabla^2 \PP_s f(x)\|^2\,+\, \e\,\a\left(1+\lambda(x)\right)\,\left|\nabla \PP_s f(x)\right|^2 \ .
\end{equation*}

Finally, by conditions \eqref{cond_expdecay_V2} and \eqref{cond_expdecay_Vbis}, we can bound respectively \eqref{dsL_term4} and \eqref{dsL_term5} by
\begin{align*}
    \eqref{dsL_term4} \,\leq\, 4\left(\rho\,\lambda(x)-\gamma\right)\, \left\|\nabla^2\PP_s f(x)\right\|^2 \ ,
\end{align*}
and, similarly,
\begin{align*}
    \eqref{dsL_term5} \,& =\, 4\sum_{i,j=1}^N\sum_{k=1}^d \left(\partial_j\partial_i \PP_s f(x)\right)\,\left( \partial_j V^T_k(x) \left(\nabla^2 \PP_s f(x)\right)\partial_i V_k(x)\right)\\
    &\leq\, 4\,\left(\sup_{i,j=1,\dots,N}\sum_{k=1}^d \left|\partial_i V^T_k(x) \right|\,\left|\partial_j V^T_k(x) \right|\right)\, \sum_{i,j=1}^N \left|\partial_j\partial_i \PP_s f(x)\right|\cdot \left\|\nabla^2 \PP_s f(x)\right\| \\
    &\leq\, 4\left(\rho\,\lambda(x)-\gamma \right)\,\left\|\nabla^2\PP_s f(x)\right\|^2\ ,
\end{align*}
where the last inequality follows by writing
\begin{equation*}
    \sup_{i,j=1,\dots,N}\sum_{k=1}^d \left|\partial_i V^T_k(x) \right|\,\left|\partial_j V^T_k(x) \right|\,\leq\, \sup_{i=1,\dots,N}\sum_{k=1}^d \left|\partial_i V^T_k(x) \right|^2\ ,
\end{equation*}
by Cauchy-Schwarz inequality, and then applying condition \eqref{cond_expdecay_Vbis}.

Thus, combining together all the terms of the decomposition, from \eqref{dsL_term1} to \eqref{dsL_term6}, we obtain
\begin{align*}
    \partial_s\|\nabla^2 \PP_s &f(x)\|^2 - \LL  \|\nabla^2 \PP_s f(x)\|^2\\ \leq\,&\e^{-1}\,\a\left(1+\lambda(x)\right)\,\|\nabla^2 \PP_s f(x)\|^2\,+\, \e\,\a\left(1+\lambda(x)\right)\,\left|\nabla \PP_s f(x)\right|^2 \\
    &-2\left((1-\rho)\lambda(x)+\gamma\right)\,\|\nabla^2 \PP_s f(x)\|^2\,
    +4\left(\rho\,\lambda(x)-\gamma\right)\, \left\|\nabla^2\PP_s f(x)\right\|^2\\&+\,4\left(\rho\,\lambda(x)-\gamma \right)\,\left\|\nabla^2\PP_s f(x)\right\|^2\\
    =\,& \left((\e^{-1}\a-2+10\rho )\,\lambda(x)  \,+\,  \e^{-1}\a-10\gamma\right)\,\|\nabla^2 \PP_s f(x)\|^2\\
    &+\,\e\,\a\left(1+\lambda(x)\right)\,\left|\nabla \PP_s f(x)\right|^2
\end{align*}
Now, to prove \eqref{to_prove_expdecay2}, we combine the above bound with \eqref{from_previous_lemma} and we write
\begin{equation*}
    \partial_s \left(|\nabla \PP_s f(x)|^2+c\,\|\nabla^2 \PP_s f(x)\|^2\right)-\LL \left(|\nabla \PP_s f(x)|^2+c\,\|\nabla^2 \PP_s f(x)\|^2\right)\,\leq\, A_1 + A_2\ ,
\end{equation*}
with
\begin{align*}
    A_1\, :=&\,2\left((\rho-1)\,\lambda(x)-\gamma\right) \, \left|\nabla \PP_s f(x)\right|^2\,+\, c\e\,\a\left(1+\lambda(x)\right)\,\left|\nabla \PP_s f(x)\right|^2\\
    =&\, \left((2\rho-2+c\e\a)\lambda(x)\,- 2\gamma+c\e\a\right)\,\left|\nabla \PP_s f(x)\right|^2 ,
\end{align*}
and
\begin{align*}
    A_2\,:=\,& c\,\left((\e^{-1}\a-2+10\rho )\,\lambda(x)  \,+\,  \e^{-1}\a-10\gamma\right)\,\|\nabla^2 \PP_s f(x)\|^2\ .
\end{align*}
Provided $\rho<\frac{1}{5}$, we can choose $\e>0$ so that $\e^{-1}\a\leq 2-10\rho$ and $\e^{-1}\a<10\gamma$. Thus, there exists a constant $\sigma_2>0$ such that $A_2\leq - c\,\sigma_2\,\left\|\nabla^2 \PP_s f(x)\right\|^2<0 $. 

Then, once we have chosen $\e>0$, we can choose $c>0$ such that $c\e\a\leq 2(1-\rho)$ and $c\e\a<2\gamma$. Thus, there exists a constant $\sigma_1>0$ such that $A_1\leq -\sigma_1\,\left|\nabla\PP_s f(x)\right|^2<0 $.

Choosing $\sigma=\min\{\sigma_1,\sigma_2\}$, we obtain \eqref{to_prove_expdecay2}. Therefore, \eqref{second_D} holds and this concludes the proof.

\end{proof}

\begin{proof}[Proof of Lemma \ref{lemma_modified_ExpDecay}]
    We first prove that, if Assumption \ref{ass_for_modified_expdecay} holds for $U_0$ and $V_k$, $k=1,\dots,N$, then similar conditions to Assumption \ref{ass_expdecay2} hold for $U^\delta_0$, $V^\delta_k$, $k=1,\dots,N$. However, the main difference is that the constants (namely $\a,\;\rho$ and $\gamma$) of these conditions for $U^\delta_0$ and $V^\delta_k$ depend on $\delta>0$. Hence, if we apply directly Lemma \ref{lemma_expdecay1} and Lemma \ref{lemma_expdecay2} we would obtain that there exist constants $\sigma_\delta,\, K_\delta>0$ (depending on $\delta$) such that
    \begin{equation*}
         \|\PP^\delta_t f\|_{\CC^2_b}\,\leq\,K_\delta\,e^{-\sigma_\delta t}\, \| f\|_{\CC^2_b}\ .
    \end{equation*}
    Therefore, to show that in fact the constants $K$ and $\sigma$ in the exponential decay of the first two derivatives of $\PP^\delta_t$ do not depend on $\delta$, we will go through the proof of Lemma \ref{lemma_expdecay1} and Lemma \ref{lemma_expdecay2} again, to highlight all the dependencies on $\delta$.

    First of all, note that by the proof of Lemma \ref{lemma_derivativesF} we know that \eqref{1dF_bound} and \eqref{2dF_bound} hold. Applying condition \eqref{cond_expdecay2_2d_formodified} to \eqref{2dF_bound}, we obtain 
    \begin{align}
        \left|\partial_j\partial_i F^\delta (x)\right|\,&\leq\, \delta\,\left( 1+\delta \lambda(y)\right)^{-3}\,\left(\sum_{m,m'=1}^N \left|\partial_m\partial_{m'} U_0(y)\right|^2 \right)^{1/2}\nonumber\\
        &\leq\, \frac{\delta\,\a\,\lambda(y)}{\left( 1+\delta \lambda(y)\right)^{3}}\ ,\label{2dFbis}
    \end{align}
    for every $i,j=1,\dots,N$, $\delta\in(0,1)$, where $y=F^\delta(x)$.\\

    To simplify the presentation, we write $y=F^\delta(x)$ and $A(y):=\1\,-\,\delta\, \nabla U_0(y)$ throughout the proof.

    STEP 1: we show that, under condition \eqref{cond_nablaU0_formodified}, $\nabla U^\delta_0$ satisfies
    \begin{equation}
         v^T\,\nabla U^\delta_0(x)\,v\,\leq\,  -\,\frac{\lambda(F^\delta(x))}{\beta^2\,\left(1+\delta\, \lambda(F^\delta(x))\right)}\,|v|^2\ ,\label{1dUdelta_bound_withdelta}
    \end{equation}
    thus condition \eqref{cond_nablaU0} holds with positive function $\tilde{\lambda}_\delta(x):=\frac{\lambda(F^\delta(x))}{\beta^2\,\left(1+\delta\, \lambda(F^\delta(x))\right)}$, depending on $\delta$ (and $F^\delta$). To show this, note that 
    \begin{equation*}
        \nabla U^\delta_0(x)\,=\,\nabla U_0(y)\big|_{y=F^\delta(x)}\,\nabla F^\delta(x)\,=\, \nabla U_0(y)\big|_{y=F^\delta(x)}\,\left(\1\,-\,\delta\, \nabla U_0(y)\big|_{y=F^\delta(x)} \right)^{-1}\ ,
    \end{equation*}
    by \eqref{dF}. We have
    \begin{align}
        v^T\,\nabla U^\delta_0(x)\,v\,&=\, (A(y)\,A(y)^{-1}v)^T \,\nabla U_0(y)\, A(y)^{-1}v\nonumber\\
        &=\, (A(y)^{-1}v)^T\,\left(\1\,-\,\delta\, \nabla U_0(y)\right)^{T}\,\nabla U_0(y)\, (A(y)^{-1}v)\nonumber\\
        &\leq\, -\lambda(y)\,|A(y)^{-1}v|^2\,-\, \delta\, \left|\nabla U_0(y)\,A(y)^{-1}v\right|^2\ ,\label{step1_bound}
    \end{align}
    where the last step follows by \eqref{cond_nablaU0_formodified}. Moreover, again by \eqref{cond_nablaU0_formodified}, we see that
    \begin{equation*}
        \left(1+\delta \,\lambda(y)\right)\,|v|^2\,\leq\, v^T A(y) v\,\leq\, \left(1+\delta \,\beta\, \lambda(y)\right)\,|v|^2\ ,
    \end{equation*}
    in particular $\inf_{v\neq 0} \frac{|A(y)v|}{|v|}\,\geq\, 1+\delta \,\lambda(y)$ and thus $\|A(y)^{-1}\|_2\,\leq\, \left(1+\delta \,\lambda(y)\right)^{-1}$. Similarly, we can see that $\inf_{v\neq 0} \frac{|A(y)^{-1}v|}{|v|}\,\geq\, \left(1+\delta \,\beta\, \lambda(y)\right)^{-1}$ and $\inf_{v\neq 0} \frac{|\nabla U_0(y) v|}{|v|}\,\geq\, \lambda(y)$ by \eqref{cond_nablaU0_formodified}. Thus, from \eqref{step1_bound} we obtain
    \begin{align*}
        v^T\,\nabla U^\delta_0(x)\,v\,&\leq\, -\,\frac{\lambda(y)}{\left(1+\delta\,\beta\,\lambda(y)\right)^2}\,|v|^2\,-\, \frac{\delta\,\lambda(y)^2}{\left(1+\delta\,\beta\, \lambda(y)\right)^2}\,|v|^2\nonumber\\
        &=\, -\,\frac{\lambda(y)\cdot\left(1+\delta\,\lambda(y)\right)}{\left(1+\delta\,\beta\, \lambda(y)\right)^2}\,|v|^2\nonumber\\
        &\leq\,  -\,\frac{\lambda(F^\delta(x))}{\beta^2\,\left(1+\delta\, \lambda(F^\delta(x))\right)}\,|v|^2\ ,
    \end{align*}
    where the last step follows by noting that $\beta\geq 1$ in \eqref{cond_nablaU0_formodified} and recalling that $y=F^\delta(x)$. This proves \eqref{1dUdelta_bound_withdelta}. \\

    STEP 2: we prove that \begin{equation}\label{2dUdelta_bound}
        \sum_{j,i=1}^N \left|\partial_j \partial_i U_0^\delta(x)\right|\,\leq\, \frac{\tilde{\a}\,\lambda(F^\delta(x)) }{\left(1+\delta \lambda(F^\delta(x))\right)^2}\ ,
    \end{equation}
    with $\tilde{\a}=N^2(1+\beta \sqrt{N})\a$, for every $\delta\in(0,1)$ and $x\in\R^N$. Thus, condition \eqref{cond_expdecay2_2d} is satisfied for the second derivatives of $U^\delta_0$, and, more precisely, we show that the second derivatives are bounded uniformly in $x\in\R^N$ for any given $\delta\in(0,1)$. Indeed, writing
    \begin{equation*}
        \left(\partial_i U^\delta_0(x)\right)_k\,=\, \langle \nabla (U_0(y))_k\big|_{y=F^\delta(x)},\,\partial_i F^\delta(x)\rangle\ ,
    \end{equation*}
    denoting $(v)_k:=\langle v,e_k\rangle$, with $\{e_1,\dots,e_N\}$ standard basis of $\R^N$, we have
    \begin{equation*}
        \left(\partial_j\partial_i U^\delta_0(x) \right)_k\,=\, \langle \nabla^2 (U_0(y))_k\big|_{y=F^\delta(x)}\,\partial_j F^\delta(x),\,\partial_i F^\delta(x)\rangle\,+\,\langle \nabla (U_0(y))_k\big|_{y=F^\delta(x)},\,\partial_j\partial_i F^\delta(x)\rangle\ .
    \end{equation*}

    By Cauchy-Schwarz and writing $\nabla (U_0(y))_k\,=\, \nabla U_0(y)\,e_k$, we have
    \begin{align*}
        \left|\partial_j \partial_i U^\delta_0(x)\right|\,\leq\, &\left|\partial_j F^\delta(x)\right|\,\left|\partial_i F^\delta(x)\right|\,\left(\sum_k \left\|\nabla^2 (U_0(y))_k\right\|^2\right)^{1/2}\\
        &+\, \left|\partial_j\partial_i F^\delta(x)\right|\,\left(\sum_k \left|\nabla U_0(y)\,e_k\right|^2\right)^{1/2}\\
        \leq\,& \left|\partial_j F^\delta(x)\right|\,\left|\partial_i F^\delta(x)\right|\,\left(\sum_{m,m'} \left|\partial_m \partial_{m'} U_0(y)\right|^2\right)^{1/2}\,+\,\beta \sqrt{N}\,\lambda(y)\, \left|\partial_j\partial_i F^\delta(x)\right|\ ,
    \end{align*}
    where the last step follows by noting that \eqref{cond_nablaU0_formodified} gives $|\nabla U_0(y)v|\leq \beta\,\lambda(y)\,|v|$. By \eqref{1dF_bound} and \eqref{2dFbis} and applying condition \eqref{cond_expdecay2_2d_formodified}, we obtain
    \begin{align*}
        \left|\partial_j \partial_i U^\delta_0(x)\right|\,\leq\,\frac{\a\,\lambda(y)}{\left(1+\delta \lambda(y)\right)^2}\,
        +\, \frac{\beta \sqrt{N}\,\lambda(y)\cdot \delta\,\a\,\lambda(y)}{\left( 1+\delta \lambda(y)\right)^{3}}\,\leq\, \frac{(1+\beta \sqrt{N})\a\,\lambda(y) }{\left(1+\delta \lambda(y)\right)^2}  \ .
    \end{align*}
    Hence, \eqref{2dUdelta_bound} holds for every $\delta\in(0,1)$ and $x\in\R^N$.\\
    
    STEP 3: We now show that the first and second derivatives of the modified vector fields $V^\delta_k$, $k=1,\dots,d$ can be bounded respectively by
    \begin{align}
      \sup_{i=1,\dots,N}\sum_{k=1}^d  \left| \partial_i V^\delta_k(x)\right|^2 \, &\leq\,\frac{\frac{\rho}{N\,\beta^2}\,\lambda(y)}{1+\delta\,\lambda(y)}\ ,\label{1dVdelta_bound}\\
      \sup_{i,j=1,\dots,N}\sum_{k=1}^d \left|V^\delta_k(x)\right|\, \left| \partial_i \partial_j V^\delta_k(x)\right|\,&\leq\, \frac{\frac{\rho}{\beta^2}\,\lambda(x)}{\left(1+\delta\,\lambda(y)\right)^2}\ .\label{2dVdelta_bound}
    \end{align}
    
    By definition of $V^\delta_k(x)=V_k(F^\delta(x))$, we see that
    \begin{equation*}
         \left| \partial_i V^\delta_k(x)\right|\,=\, \left| \nabla V_k(y)\big|_{y=F^\delta(x)} \partial_i F^\delta(x) \right| \,\leq\, \frac{ \left\|\nabla V_k(y)\right\|}{1+\delta\,\lambda(y)}\ ,
    \end{equation*}
    by \eqref{1dF_bound}.

    Moreover, in the same fashion as STEP 2, we can write
    \begin{align*}
        \left|\partial_j \partial_i V^\delta_k(x)\right|\,\leq\, &\left|\partial_j F^\delta(x)\right|\,\left|\partial_i F^\delta(x)\right|\,\left(\sum_\ell \left\|\nabla^2 (V_k(y))_\ell\right\|^2\right)^{1/2}\\
        &+\, \left|\partial_j\partial_i F^\delta(x)\right|\,\left(\sum_\ell \left|\nabla V_k(y)\,e_\ell\right|^2\right)^{1/2}\\
        \leq\,& \frac{1}{\left(1+\delta\,\lambda(y)\right)^2}\,\left(\sum_{m,m'} \left| \partial_m \partial_{m'} V_k(y) \right|^2\right)^{1/2}\,+\,\frac{\delta\,\a\,\lambda(y)\,\left\|\nabla V_k(y) \right\|}{\left( 1+\delta \lambda(y)\right)^{3}} \ ,
    \end{align*}
    for every $k=1,\dots,d$, where the last step follows by \eqref{2dFbis}. 

    Hence, under assumptions \eqref{cond_expdecay_Vbis_formodified} and \eqref{cond_expdecay_V2_formodified}, we obtain \eqref{1dVdelta_bound} and \eqref{2dVdelta_bound}.

    STEP 4: We finally prove the exponential decay of $\|\nabla \PP^\delta_t f\|^2_\infty+\|\nabla^2 \PP^\delta_t f\|^2_\infty$. In the exact same fashion as the proof of Lemma \ref{lemma_expdecay1}, we obtain \eqref{need_it_later} for the semigroup associated to the modified SDE \eqref{modifiedSDE}, namely
    \begin{multline*}
     \partial_s |\nabla \PP^\delta_s f(x)|^2 -\, \LL^\delta  |\nabla \PP^\delta_s f(x)|^2\\
     \leq\,2\,\left(\nabla \PP^\delta_s f(x)\right)^T \nabla U^\delta_0(x) \left(\nabla \PP^\delta_s f(x)\right)\,+\,2N\, \sum_{i=1}^N\sum_{k=1}^d \left|\partial_i \PP^\delta_s f(x)\,\partial_i V^\delta_k(x)\right|^2\ ,
\end{multline*}
where we denote by $\LL^\delta$ the generator associated to $\PP^\delta_s$.

Thus, by \eqref{1dUdelta_bound_withdelta} and \eqref{1dVdelta_bound}, we obtain
\begin{align}
    \partial_s |\nabla \PP^\delta_s f(x)|^2 -\, \LL^\delta  |\nabla \PP^\delta_s f(x)|^2\,\leq\, \frac{2\left(\rho-1\right)\,\lambda(y)}{\beta^2\,\left(1+\delta\,\lambda(y)\right)}\,\left|\nabla \PP^\delta_s f(x)\right|^2\ .\label{firstderivatives_modified}
\end{align}
Since $\rho\leq 1$ and $\delta\in(0,1)$, recalling that $\inf_{x\in\R^N} \lambda(x) =:\lambda^\star >0$, we have
\begin{align*}
    \partial_s |\nabla \PP^\delta_s f(x)|^2 -\, \LL^\delta  |\nabla \PP^\delta_s f(x)|^2\,&\leq\, \frac{2\left(\rho-1\right)\,\lambda(y)}{\beta^2\,\left(1+\lambda(y)\right)}\,\left|\nabla \PP^\delta_s f(x)\right|^2\\
    &\leq\, \frac{2}{\beta^2}\,\left(\rho-1\right)\,\frac{\lambda^\star}{1+\lambda^\star}\,\left|\nabla \PP^\delta_s f(x)\right|^2\ ,
\end{align*}

hence, we obtain in the same fashion as Lemma \ref{lemma_expdecay1} the exponential decay for the first derivatives, i.e.
\begin{equation*}
         \|\nabla \PP^\delta_t f\|^2_{\infty}\,\leq\,e^{-2\gamma t}\,\|\nabla f\|^2_{\infty}\ ,
    \end{equation*}

with $\gamma=\frac{2}{\beta^2}\left(\rho-1\right)\frac{\lambda^\star}{1+\lambda^\star}>0$.

Moreover, proceeding similarly to the proof of Lemma \ref{lemma_expdecay2}, we obtain exactly \eqref{dsL_term1}-\eqref{dsL_term6} for the semigroup of the modified SDE \eqref{modifiedSDE}, namely
\begin{subequations}
\begin{align}
    \partial_s \|\nabla^2 \PP^\delta_s f(x)\|^2& - \LL^\delta  \|\nabla^2 \PP^\delta_s f(x)\|^2\nonumber \\
    =\,&2\sum_{i,j=1}^N \left(\partial_j\partial_i \PP^\delta_s f(x)\right)\cdot \langle \partial_j\partial_i U^\delta_0(x),\,\nabla \PP^\delta_s f(x)\rangle\label{modified_dsL_term1}\\
    &+2\sum_{i,j=1}^N \left(\partial_j\partial_i \PP^\delta_s f(x)\right)\cdot\langle\partial_i U^\delta_0(x),\,\nabla \partial_j \PP^\delta_s f(x)\rangle\label{modified_dsL_term2}\\
    &-\,2\sum_{k=1}^d\sum_{i,j=1}^N \left(\langle \nabla \partial_j\partial_i \PP^\delta_s f(x),\,V^\delta_k(x) \rangle\right)^2\label{modified_dsL_term3}\\
    &+\,4\sum_{i,j=1}^N\sum_{k=1}^d \left(\partial_j\partial_i \PP^\delta_s f(x)\right)\,\left( V^\delta_k(x)^T \left(\nabla^2 \PP^\delta_s f(x)\right)\partial_j\partial_i V^\delta_k(x)\right)\label{modified_dsL_term4}\\
    &+\,4\sum_{i,j=1}^N\sum_{k=1}^d \left(\partial_j\partial_i \PP^\delta_s f(x)\right)\,\left( \partial_j V^\delta_k(x)^T \left(\nabla^2 \PP^\delta_s f(x)\right)\partial_i V^\delta_k(x)\right)\label{modified_dsL_term5}\\
    &+\,4\sum_{i,j=1}^N\sum_{k=1}^d \left(\partial_j\partial_i \PP^\delta_s f(x)\right)\,\left( V^\delta_k(x)^T \left(\partial_j\nabla^2 \PP^\delta_s f(x)\right)\partial_i V^\delta_k(x)\right)
    \ .\label{modified_dsL_term6}
\end{align}
\end{subequations}
We then proceed in the exact same fashion as proof of Lemma \ref{lemma_expdecay2}, but applying the bounds \eqref{1dUdelta_bound_withdelta}-\eqref{2dVdelta_bound} instead of those in Assumption \ref{ass_expdecay2}.
More precisely, by \eqref{1dVdelta_bound} and \eqref{1dUdelta_bound_withdelta},
\begin{align*}
    \eqref{modified_dsL_term2}+\eqref{modified_dsL_term3}+ \eqref{modified_dsL_term6}\,\leq\, &2\sum_{j=1}^N \left(\nabla\partial_j \PP^\delta_s f(x)\right)^T\, \nabla U^\delta_0(x)\,\nabla\partial_j \PP^\delta_s f(x)\\
    &\,+\, 2N\sum_{i,j=1}^N \sum_{k=1}^d  \left|\partial_j\partial_i \PP^\delta_s f(x)\,\partial_i V^\delta_k(x)\right|^2\\
    \leq\,& 2\,(\rho-1)\,\frac{\lambda(F^\delta(x))}{\beta^2\left(1+\delta\,\lambda(F^\delta(x))\right)}\,\|\nabla^2 \PP^\delta_s f(x)\|^2\ .
\end{align*}
By condition \eqref{2dUdelta_bound},
\begin{equation*}
    \eqref{modified_dsL_term1}\,\leq\, \frac{\e^{-1}\,\tilde{\a}\,\lambda(F^\delta(x)) }{\left(1+\delta\, \lambda(F^\delta(x))\right)^2}\,\|\nabla^2 \PP^\delta_s f(x)\|^2\,+\, \frac{\e\,\tilde{\a}\,\lambda(F^\delta(x)) }{\left(1+\delta\, \lambda(F^\delta(x))\right)^2}\,\left|\nabla \PP^\delta_s f(x)\right|^2\ ,
\end{equation*}
for any $\e>0$, which we can choose according to $\delta$ too. Finally, by \eqref{2dVdelta_bound} and \eqref{1dVdelta_bound}, we have
\begin{equation*}
    \eqref{modified_dsL_term4}\,\leq\, 4\,\rho\,\frac{\lambda(F^\delta(x))}{\beta^2\left(1+\delta\,\lambda(F^\delta(x))\right)}\,\left\| \nabla^2 \PP^\delta_s f(x)\right\|^2\ ,
\end{equation*}
and
\begin{equation*}
    \eqref{modified_dsL_term5}\,\leq\, 4\rho\,\frac{\lambda(F^\delta(x))}{\beta^2\left(1+\delta\,\lambda(F^\delta(x))\right)}\,\left\| \nabla^2 \PP^\delta_s f(x)\right\|^2\ .
\end{equation*}

Thus, using the above inequalities for \eqref{modified_dsL_term1} to \eqref{modified_dsL_term6}, we obtain
\begin{align*}
    \partial_s \|\nabla^2 & \PP^\delta_s f(x)\|^2 - \LL^\delta  \|\nabla^2 \PP^\delta_s f(x)\|^2\\
    \leq\,&2(\rho-1)\,\frac{\lambda(F^\delta(x))}{\beta^2\left(1+\delta\,\lambda(F^\delta(x))\right)}\,\|\nabla^2 \PP^\delta_s f(x)\|^2\\
    &+\,\frac{\e^{-1}\,\tilde{\a}\,\lambda(F^\delta(x)) }{\left(1+\delta\, \lambda(F^\delta(x))\right)^2}\,\|\nabla^2 \PP^\delta_s f(x)\|^2\,+\, \frac{\e\,\tilde{\a}\,\lambda(F^\delta(x)) }{\left(1+\delta\, \lambda(F^\delta(x))\right)^2}\,\left|\nabla \PP^\delta_s f(x)\right|^2\\
    &+\,8\rho\,\frac{\lambda(F^\delta(x))}{\beta^2\left(1+\delta\,\lambda(F^\delta(x))\right)}\,\left\| \nabla^2 \PP^\delta_s f(x)\right\|^2\\
    =\,& \left((10\rho-2)\,\frac{\lambda(F^\delta(x))}{\beta^2\left(1+\delta\,\lambda(F^\delta(x))\right)}\,+\, \frac{\e^{-1}\,\tilde{\a}\,\lambda(F^\delta(x)) }{\left(1+\delta \lambda(F^\delta(x))\right)^2}\right)\,\|\nabla^2 \PP^\delta_s f(x)\|^2\\&+\,\frac{\e\,\tilde{\a}\,\lambda(F^\delta(x)) }{\left(1+\delta \lambda(F^\delta(x))\right)^2}\,\left|\nabla \PP^\delta_s f(x)\right|^2 \ .
\end{align*}
Combining this with \eqref{firstderivatives_modified}, we obtain
\begin{equation*}
    \partial_s \left(|\nabla \PP^\delta_s f(x)|^2+c\,\|\nabla^2 \PP^\delta_s f(x)\|^2\right)-\LL^\delta \left(|\nabla \PP^\delta_s f(x)|^2+c\,\|\nabla^2 \PP^\delta_s f(x)\|^2\right)\,\leq\, A_1 + A_2\ ,
\end{equation*}
with
\begin{align*}
    A_1\,=\, \left(2({\rho}-1)\,\frac{\lambda(F^\delta(x))}{\beta^2\,(1+\delta\,\lambda(F^\delta(x)))}\,+\,c\,\frac{\e\,\tilde{\a}\,\lambda(F^\delta(x)) }{\left(1+\delta \lambda(F^\delta(x))\right)^2}\right)\,\left|\nabla \PP^\delta_s f(x)\right|^2\ ,
\end{align*}
and
\begin{align*}
    A_2\,=\, c\,\left((10\rho-2)\,\frac{\lambda(F^\delta(x))}{\beta^2\left(1+\delta\,\lambda(F^\delta(x))\right)}\,+\, \frac{\e^{-1}\,\tilde{\a}\,\lambda(F^\delta(x)) }{\left(1+\delta \lambda(F^\delta(x))\right)^2}\right)\,\|\nabla^2 \PP^\delta_s f(x)\|^2\ .
\end{align*}
Note that, if $\rho\in(0,\frac{1}{5})$, then 
\begin{align*}
    A_1\,&\leq\, \frac{2\rho-2+c\e\,\tilde{\a}\,\beta^2}{\beta^2\,\left(1+\delta \lambda(F^\delta(x))\right)^2}\,\lambda(F^\delta(x))\,\left|\nabla \PP^\delta_s f(x)\right|^2\ ,\\
    A_2\,&\leq\, c\,\frac{10\rho-2+\e^{-1}\tilde{\a}\,\beta^2}{\beta^2\,\left(1+\delta \lambda(F^\delta(x))\right)^2}\,\lambda(F^\delta(x))\,\|\nabla^2 \PP^\delta_s f(x)\|^2\ .
\end{align*}
We can then choose $\e>0$ such that $10\rho-2+\e^{-1}\tilde{\a}\,\beta^2=:-\gamma_1 <0$, so that 
$$A_2\leq -\,\frac{c\,\gamma_1\,\lambda^\star}{\beta^2(1+\lambda^\star)^2}\,\|\nabla^2 \PP^\delta_s f(x)\|^2\ ,$$
for every $\delta\in(0,1)$, where $\lambda^\star=\inf_x \lambda(x)>0$. Once we have chosen $\e>0$, we can then choose $c>0$ such that $2\rho-2+c\e\,\tilde{\a}\,\beta^2=:-\gamma_2<0$. This way we have
\begin{equation*}
    A_1\,\leq\, -\,\frac{\gamma_2\,\lambda^\star}{\beta^2(1+\lambda^\star)^2}\,\left|\nabla \PP^\delta_s f(x)\right|^2\ ,
\end{equation*}
for every $\delta\in(0,1)$. Hence, we obtain
\begin{multline*}
    \partial_s \left(|\nabla \PP^\delta_s f(x)|^2+c\,\|\nabla^2 \PP^\delta_s f(x)\|^2\right)-\LL^\delta \left(|\nabla \PP_s f(x)|^2+c\,\|\nabla^2 \PP_s f(x)\|^2\right)\\
    \leq\, -\sigma  \left(|\nabla \PP_s f(x)|^2+c\,\|\nabla^2 \PP_s f(x)\|^2\right)\ ,
\end{multline*}
with $\sigma=\frac{\lambda^\star\,\min\left\{\gamma_1,\,\gamma_2 \right\}}{(1+\lambda^\star)^2}>0$, which is analogous to \eqref{to_prove_expdecay2} for the original process, therefore we can conclude in the same fashion as Lemma \ref{lemma_expdecay2}.

\end{proof}

\section{Technical Lemmas}

\begin{lemma}\label{lemma_P-Q}
    Let $x_t,\,y_t$ be two Feller processes with Markov semigroup $\PP$ and $\QQ$, respectively. Denoting by $\LL^\PP$ and $\LL^\QQ$ the corresponding Markov generators, the following identity holds,
    \begin{equation*}
        \PP_t f(x)-\QQ_t f(x)\,=\,  \int_0^t \E_x\left[\left(\LL^\PP-\LL^\QQ\right)\PP_{t-s} f(y_s)\right]\,ds\ ,
    \end{equation*}
    for every $f\in\CC^2_b(\R^N)$, $x\in\R^N$, and $t\geq 0$.
\end{lemma}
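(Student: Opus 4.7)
The plan is to apply the classical semigroup difference trick: fix $f\in\CC^2_b(\R^N)$, $x\in\R^N$, $t>0$, and consider the map
\begin{equation*}
    [0,t]\ni s\;\longmapsto\; \psi(s)\,:=\,\QQ_s\,\PP_{t-s}f(x)\ .
\end{equation*}
By construction $\psi(0)=\PP_tf(x)$ and $\psi(t)=\QQ_tf(x)$, so I would like to write
\begin{equation*}
    \QQ_tf(x)-\PP_tf(x)\,=\,\int_0^t \psi'(s)\,ds
\end{equation*}
and then identify $\psi'(s)$ using the product rule and the Kolmogorov equations for the two semigroups.

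Concretely, first I would observe that, since $f\in \CC^2_b(\R^N)$ and the vector fields defining both SDEs are smooth with enough regularity, $\PP_{t-s}f$ is sufficiently smooth to belong to the domain of $\LL^\QQ$ for every $s\in[0,t]$ (this is the step that needs a short justification, and it is the main technical point of the proof). Granted this, the backward Kolmogorov equation for $\PP$ gives $\partial_s[\PP_{t-s}f]=-\LL^\PP\PP_{t-s}f$, while for the semigroup $\QQ_s$ applied to a fixed smooth test function $g$ one has $\partial_s[\QQ_s g]=\QQ_s\LL^\QQ g=\LL^\QQ\QQ_s g$. Combining them through the product rule yields
\begin{equation*}
    \psi'(s)\,=\,\QQ_s\,\LL^\QQ\,\PP_{t-s}f(x)\,-\,\QQ_s\,\LL^\PP\,\PP_{t-s}f(x)\,=\,-\,\QQ_s\big(\LL^\PP-\LL^\QQ\big)\PP_{t-s}f(x)\ .
\end{equation*}

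Integrating on $[0,t]$ and rearranging gives
\begin{equation*}
    \PP_tf(x)-\QQ_tf(x)\,=\,\int_0^t \QQ_s\big(\LL^\PP-\LL^\QQ\big)\PP_{t-s}f(x)\,ds\,=\,\int_0^t \E_x\!\left[\big(\LL^\PP-\LL^\QQ\big)\PP_{t-s}f(y_s)\right]ds\ ,
\end{equation*}
where in the last equality I use the definition of the Markov semigroup $\QQ_s h(x)=\E_x[h(y_s)]$.

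The main obstacle, as mentioned, is justifying that $\PP_{t-s}f$ lies in the domain of $\LL^\QQ$ and that the map $s\mapsto\psi(s)$ is continuously differentiable, so that the fundamental theorem of calculus applies and the derivative formula above is valid pointwise. For the SDEs considered in this paper, with smooth coefficients and $f\in\CC^2_b(\R^N)$, standard results on smoothness of Markov semigroups (spatial derivatives up to order two remain bounded on compact time intervals) combined with the fact that both $\LL^\PP$ and $\LL^\QQ$ are second-order differential operators with smooth coefficients ensure that all quantities are well defined and the needed regularity is available. The rest of the argument is then just a routine calculation.
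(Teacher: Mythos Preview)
Your proposal is correct and is essentially the same as the paper's proof: both interpolate via $s\mapsto \QQ_s\PP_{t-s}f(x)=\E_x[\PP_{t-s}f(y_s)]$ and use $\partial_s\PP_{t-s}f=-\LL^\PP\PP_{t-s}f$ together with the forward action of $\LL^\QQ$ to identify the derivative. The only cosmetic difference is that the paper packages the differentiation step as Dynkin's formula for the time-dependent test function $\varphi_s=\PP_{t-s}f$ along $y_s$, whereas you write out the product rule for $\QQ_s\PP_{t-s}f$ explicitly.
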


\begin{proof}
By definition of Markov generator, we can write
    \begin{align*}
        \E_x\left[\phi_t(y_t)\right]\,=\, \phi_0(x)\,+\, \int_0^t \E_x\left[\partial_s\phi_s(y_s)\,+\,\LL^\QQ\phi_s(y_s) \right]\,ds\ ,
    \end{align*}
for every time-dependent function $\varphi_\cdot\in \CC^2_b([0,\infty)\times\R^N)$. Fixing the final time $t\geq 0$ and choosing $\varphi_s = \PP_{t-s} f$ with $f\in\CC^2_b(\R^N)$, for every $s\in[0,t]$, we obtain
\begin{align*}
    \QQ_t f(x)\,=\,\E_x\left[f(y_t)\right]\,&=\, \PP_t f(x)\,+\, \int_0^t \E_x\left[\partial_s\PP_{t-s} f(y_s)\,+\,\LL^\QQ \PP_{t-s} f(y_s) \right]\,ds\\
    &=\, \PP_t f(x)\,+\, \int_0^t \E_x\left[-\LL^\PP\PP_{t-s} f(y_s)\,+\,\LL^\QQ \PP_{t-s} f(y_s) \right]\,ds\ .
\end{align*}
This gives the statement.

\end{proof}

\begin{lemma}\label{technical_lemma}
For every $a,b>0$, $k\in\N$, $k>1$, and $\a>0$, the following bound holds,
\begin{equation}\label{technical_bound}
    (a+b)^k\,\leq\,\left(1+{2^{k-1}}\,\a  \right)\,a^k \,+\, \left(1+2^{k-1}\,\a^{-1} \right)\,b^k \ .
\end{equation}
\end{lemma}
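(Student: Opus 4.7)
The plan is to prove~(\ref{technical_bound}) by combining the binomial expansion of $(a+b)^k$ with Young's inequality (weighted AM--GM) applied to each cross term. Writing
$(a+b)^k = a^k + b^k + \sum_{j=1}^{k-1}\binom{k}{j}\,a^{k-j}\,b^j$,
the two extremal summands $a^k$ and $b^k$ directly account for the ``$1$'' appearing in each of the factors $(1+2^{k-1}\a)$ and $(1+2^{k-1}\a^{-1})$, so it is enough to dominate the cross-sum $\sum_{j=1}^{k-1}\binom{k}{j}a^{k-j}b^j$ by $2^{k-1}\a\,a^k + 2^{k-1}\a^{-1}b^k$.

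For each $j\in\{1,\dots,k-1\}$ I would apply the inequality $X^{(k-j)/k}Y^{j/k}\leq\frac{k-j}{k}X+\frac{j}{k}Y$ with $X$ and $Y$ proportional, respectively, to $\a\,a^k$ and $\a^{-1}b^k$. Concretely, writing $a^{k-j}b^j = (\a\,a^k)^{(k-j)/k}(\a^{-(k-j)/j}b^k)^{j/k}$ yields the bound $a^{k-j}b^j\leq \frac{k-j}{k}\a\,a^k + \frac{j}{k}\a^{-(k-j)/j}b^k$, while the dual split $a^{k-j}b^j=(\a^{j/(k-j)}a^k)^{(k-j)/k}(\a^{-1}b^k)^{j/k}$ gives the symmetric bound. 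I would then use the first split for $j\leq k/2$ and the second for $j>k/2$ so that in each term the offending $\a$-exponent has modulus at most $1$. Summation, combined with the combinatorial identities $\binom{k}{j}\frac{k-j}{k}=\binom{k-1}{j}$ and $\binom{k}{j}\frac{j}{k}=\binom{k-1}{j-1}$ together with $\sum_{\ell=0}^{k-1}\binom{k-1}{\ell}=2^{k-1}$, would collapse the prefactors to the required $2^{k-1}$ in front of both $\a\,a^k$ and $\a^{-1}b^k$.

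A cleaner alternative, which I might actually prefer in writing up, is induction on $k$. The base case $k=2$ is the classical $2ab\leq \a\,a^2+\a^{-1}b^2$, which in fact gives a bound even stronger than (\ref{technical_bound}). For the inductive step, factor $(a+b)^k=(a+b)(a+b)^{k-1}$, substitute the inductive hypothesis (with the same parameter $\a$), and reduce the two new mixed terms $a^{k-1}b$ and $ab^{k-1}$ that arise by a single application of Young's inequality apiece; the doubling of the constant from $2^{k-2}$ to $2^{k-1}$ is precisely accounted for by these two cross contributions.

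The main obstacle is maintaining a linear $\a$-dependence throughout: a careless single-split Young inequality introduces $j$-dependent exponents $\a^{\pm(k-j)/j}$ that are not uniformly bounded by $\a^{\pm 1}$, so one must either split the index set $\{1,\dots,k-1\}$ according to whether $j\leq k/2$ or $j>k/2$ and apply the matching symmetric version of Young's in each half, or else bypass this issue entirely via the induction argument, which only ever invokes the base-case AM--GM inequality.
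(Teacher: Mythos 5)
Neither of your two routes can be completed, and the underlying reason is that the inequality \eqref{technical_bound} is in fact \emph{false} for $k\geq 3$ when $\a$ is small and $a/b$ is large: take $k=3$, $\a=1/100$, $a=50$, $b=1$, so that $(a+b)^3=51^3=132651$ while $(1+4\a)\,50^3+(1+4\a^{-1})=130000+401=130401$. The structural obstruction sits in the single cross term $\binom{k}{1}a^{k-1}b$: since all binomial terms are nonnegative, \eqref{technical_bound} would force $k\,a^{k-1}b\leq 2^{k-1}\a\,a^k+2^{k-1}\a^{-1}b^k$, and minimising the right-hand side minus the left over $a$ (with $b=1$) shows that any bound $k\,a^{k-1}b\leq A\,a^k+B\,b^k$ requires $B\geq\bigl(\tfrac{k-1}{A}\bigr)^{k-1}$; with $A=2^{k-1}\a$ this forces $B\gtrsim\a^{-(k-1)}$, which is incompatible with $B=2^{k-1}\a^{-1}$ for small $\a$ once $k\geq 3$. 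This is exactly where your arguments break. In the binomial route, the index split does not rescue you: on the relevant half of the index set the exponents $-(k-j)/j$ and $j/(k-j)$ do have modulus at most $1$ (note your assignment of the two splits to $j\leq k/2$ versus $j>k/2$ is the wrong way round for that purpose), but $\a^{-\theta}\leq\a^{-1}$ for $\theta\in(0,1)$ requires $\a\leq 1$ while $\a^{\theta'}\leq\a$ requires $\a\geq 1$, so the two families of exponents cannot be collapsed to $\a^{\pm1}$ simultaneously for general $\a$. In the induction route, the step from $k-1$ to $k$ produces precisely the cross term $a^{k-1}b$, which, as shown above, cannot be absorbed into $2^{k-1}\a\,a^k+2^{k-1}\a^{-1}b^k$.

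For comparison, the paper's own proof follows your first route (binomial expansion plus weighted Young's inequality with weights $\eta_\ell$) and stumbles at the same point: its ``rearranging'' step implicitly requires $\eta_\ell^{-q_\ell}/q_\ell=\eta_\ell^{p_\ell}/p_\ell$ simultaneously with $\eta_\ell^{p_\ell}=\a$, which is consistent only for $\a=1$. What the Young computation genuinely yields is
\begin{equation*}
(a+b)^k\,\leq\,\left(1+2^{k-1}\a\right)a^k\,+\,\Bigl(1+\sum_{j=1}^{k-1}\tbinom{k-1}{j-1}\,\a^{-(k-j)/j}\Bigr)\,b^k\,\leq\,\left(1+2^{k-1}\a\right)a^k\,+\,\left(1+2^{k-1}\a^{-(k-1)}\right)b^k
\end{equation*}
for $\a\in(0,1]$, and this weaker statement is what should be proved. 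It is also all that is needed downstream: in Lemma \ref{lemma_alternative} one takes $\a=b_0\delta\,2^{-k+1}$, so $\a^{-(k-1)}\delta^k\lesssim\delta$, and \eqref{2kmoment} then holds with $C_k\,\delta$ in place of $C_k\,\delta^{k-1}$; the subsequent geometric iteration still produces a $\delta$-uniform constant, so the conclusion of Lemma \ref{lemma_alternative} is unaffected.
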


\begin{proof}
We recall the generalised Young's inequality: for any $a,b, \eta>0$ and  $\zeta, \tilde{\zeta}>0$ such that $\zeta>1$ and $\zeta^{-1}+\tilde{\zeta}^{-1}=1$, we have
$$
ab \leq \frac{a^{\zeta}\eta^{\zeta}}{\zeta}+\frac{b^{\tilde{\zeta}}}{\tilde{\zeta}\eta^{\tilde{\zeta}}} \,.
$$
The statement is now a straightforward consequence of  the above.  Indeed, we have 
\begin{align*}
    (a+b)^k\,=\,\sum_{\ell=0}^k \binom{k}{\ell} a^\ell b^{k-\ell}\,\leq\, a^k+b^k + \sum_{\ell=1}^{k-1} \binom{k}{\ell}\,\left( \frac{\eta_\ell^{p_\ell}}{p_\ell}\, a^k + \frac{1}{\eta_\ell^{q_\ell}\cdot q_\ell}\, b^k \right)\ , 
\end{align*}
with $p_\ell:=k/\ell$, $q_\ell:=k/(k-\ell)$, and for any arbitrary $\eta_\ell>0$. Rearranging, since $q_\ell=p_{k-\ell}$ and $\binom{k}{k-\ell}=\binom{k}{\ell}$ for every $\ell=1,\dots,k-1$, we can write
\begin{align*}
    \sum_{\ell=1}^{k-1} \binom{k}{\ell}\,\left( \frac{\eta_\ell^{p_\ell}}{p_\ell}\, a^k + \frac{1}{\eta_\ell^{q_\ell}\cdot q_\ell}\, b^k \right)\,&
=\, \sum_{\ell=1}^{k-1} \binom{k}{\ell}\, \frac{\eta_\ell^{p_\ell}}{p_\ell}\, a^k \,+\, \sum_{\ell=1}^{k-1} \binom{k}{k-\ell}\, \frac{1}{\eta^{p_{k-\ell}} p_{k-\ell} }\, b^k \\
    &=\, \sum_{\ell=1}^{k-1} \binom{k}{\ell}\, \frac{\eta_\ell^{p_\ell}}{p_\ell}\, a^k \,+\, \sum_{\ell=1}^{k-1} \binom{k}{\ell}\, \frac{1}{\eta_\ell^{p_\ell} p_\ell}\, b^k\ .
\end{align*}
Thus, 
\begin{equation*}
    (a+b)^k\,\leq\, \left(1+\sum_{\ell=1}^{k-1} \binom{k}{\ell}\,\frac{\eta_\ell^{p_\ell}}{p_\ell}  \right)\,a^k \,+\, \left(1+\sum_{\ell=1}^{k-1} \binom{k}{\ell}\,\frac{1}{\eta_\ell^{p_\ell}\,p_\ell}  \right)\,b^k\ .
\end{equation*}
Choosing $\eta_\ell$ such that $\eta_\ell^{p_\ell}=\a$ for every $\ell=1,\dots,k-1$, since
\begin{equation*}
    \sum_{\ell=1}^{k-1}\binom{k}{\ell}\,\frac{1}{p_{\ell}}\,=\,\sum_{\ell=1}^{k-1}\binom{k}{\ell}\,\frac{\ell}{k}\, =\, \sum_{\ell=1}^{k-1}\frac{(k-1)!}{(\ell-1)!(k-\ell)!}\,=\,\sum_{\ell=0}^{k-2}\binom{k-1}{\ell} \,=\,2^{k-1}-1\ ,
\end{equation*}
we obtain the statement.

\end{proof}

\begin{lemma}\label{lemma_xmoments}
Let conditions \eqref{Vk_bounded} and \eqref{cond_Lyap} hold. Then, for any $q>1$, the $q$-th moment of $x_t$ is bounded by
\begin{align*}
    \E_x\left[\left|x_t\right|^{q} \right]\,\leq\, 2\,e^{-b_0 t}  |x|^{q} +{C}_q\ ,
\end{align*}
for any $x\in\R^N$, $t>0$, with $b_0$ as in condition \eqref{cond_Lyap} and with constant ${C}_q>0$ independent of $x$ and $t$.
\end{lemma}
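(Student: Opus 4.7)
The plan is to apply Itô's formula to a smoothed power function and derive an ODE inequality for $\E_x[|x_t|^q]$ which is then integrated via Gronwall. First I would take the test function $f(x)=(\epsilon+|x|^2)^{q/2}$ (to avoid the singularity of $|x|^q$ at the origin when $q<2$; one then sends $\epsilon\downarrow 0$ at the end, or works with $(1+|x|^2)^{q/2}$ throughout and trades a universal prefactor). A direct computation gives
\begin{equation*}
\LL f(x)=q(\epsilon+|x|^2)^{q/2-1}\langle U_0(x),x\rangle+q(\epsilon+|x|^2)^{q/2-1}\sum_{k=1}^d|V_k(x)|^2+q(q-2)(\epsilon+|x|^2)^{q/2-2}\sum_{k=1}^d\langle V_k(x),x\rangle^2.
\end{equation*}

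Next I would use the Lyapunov condition \eqref{cond_Lyap} to estimate the first term by $q(\epsilon+|x|^2)^{q/2-1}(-b_0|x|^2+b_1)$ and the boundedness hypothesis \eqref{Vk_bounded} to bound $\sum_k|V_k|^2\le K$ and $\sum_k\langle V_k,x\rangle^2\le K|x|^2$ by Cauchy--Schwarz. For $q\ge 2$ the last term in $\LL f$ is nonnegative and is dominated by $q(q-2)K(\epsilon+|x|^2)^{q/2-1}$; for $1<q<2$ that same term is nonpositive and can simply be dropped from any upper bound. In either case one arrives at an inequality of the form
\begin{equation*}
\LL f(x)\;\le\; -q b_0\,|x|^2(\epsilon+|x|^2)^{q/2-1}\;+\;q\,M_q\,(\epsilon+|x|^2)^{q/2-1},
\end{equation*}
with $M_q:=b_1+K(1+(q-2)_+)$. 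Writing $|x|^2=(\epsilon+|x|^2)-\epsilon$ and invoking Young's inequality in the form $(\epsilon+|x|^2)^{q/2-1}\le \eta(\epsilon+|x|^2)^{q/2}+c_\eta$ (trivial if $q=2$), one absorbs the lower order term to obtain $\LL f\le -\alpha_q f+C_q$ with $\alpha_q$ chosen as close to $qb_0$ as one wishes, in particular $\alpha_q\ge b_0$.

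Taking $\E_x$, applying $\frac{d}{dt}\E_xf(x_t)=\E_x[\LL f(x_t)]$ and Gronwall then yields $\E_xf(x_t)\le e^{-\alpha_q t}f(x)+C_q/\alpha_q$, and bounding $|x|^q\le f(x)\le 2^{(q/2-1)_+}(1+|x|^q)$ (or using $(a+b)^{q/2}\le a^{q/2}+b^{q/2}$ for $q/2\le1$) delivers the claim, with the prefactor $2$ absorbing the discrepancy between $|x|^q$ and the smoothed version. For $q\ge 2$ the rate $\alpha_q\ge b_0$ comes for free from the computation; for $1<q<2$ the cleanest route is to first carry out the argument at $q=2$ (where the computation is immediate and produces rate $2b_0$) and then deduce the general $1<q<2$ case via Jensen's inequality $\E_x|x_t|^q\le(\E_x|x_t|^2)^{q/2}$ together with the subadditivity $(a+b)^{q/2}\le a^{q/2}+b^{q/2}$ valid for $q/2\le 1$; this gives a rate $b_0q\ge b_0$.

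The only genuinely delicate point is matching the specific prefactor and rate in the statement: the Itô computation naturally yields rate $qb_0/2$ (after Young), which is $\ge b_0$ for $q\ge 2$ but fails to beat $b_0$ directly for $q\in(1,2)$. The trick of handing $q\in(1,2)$ off to the $q=2$ case via Jensen bypasses that issue at the cost of a constant, which is exactly why the statement allows the prefactor $2$. Everything else is routine bookkeeping with Itô, Young and Gronwall, and no new ideas are required beyond the structural hypotheses \eqref{Vk_bounded}--\eqref{cond_Lyap}.
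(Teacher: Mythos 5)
Your proof is correct and follows essentially the same route as the paper: a Lyapunov/Dynkin computation for a power of $|x|$ using \eqref{cond_Lyap} and \eqref{Vk_bounded} together with Cauchy--Schwarz, Young's inequality to absorb the lower-order terms into $-c\,f+C$, Gronwall, and a Jensen/H\"older step to reach the exponents not covered directly. The only difference is cosmetic: the paper differentiates even integer powers $|x|^{2m}$ and interpolates every $q>1$ via H\"older, whereas you work with the smoothed power $(\epsilon+|x|^2)^{q/2}$ directly for $q\ge 2$ and reserve the Jensen step for $q\in(1,2)$; both give a decay rate at least $b_0$ and a harmless prefactor.
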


\begin{proof}
Let $m\in\N$ and $f(x):=|x|^{2m}$. Then we can write
\begin{align*}
    \langle U_0(x),\,\nabla f(x)\rangle\,+\,\sum_{k=1}^d V_k(x)^T&\,\nabla^2 f(x)\,V_k(x)\\
    &\,= 2m\,|x|^{2m-2}\langle U_0(x),\,x\rangle+ 2m\,|x|^{2m-2}\sum_{k=1}^d \left|V_k(x)\right|^2\\
    &\qquad+2m(2m-2)|x|^{2m-4} \sum_{k=1}^d \left|\langle V_k(x),x \rangle\right|^2\\
    &\,\leq\,2m\,|x|^{2m-2}\,\left(-b_0|x|^2 +b_1+K(2m-1)\right)\ ,
\end{align*}
by Cauchy-Schwarz, \eqref{cond_Lyap} and \eqref{Vk_bounded}. For $m>1$, we apply the generalised Young's inequality with $p=\frac{m}{m-1}$ and $q=m$,
\begin{equation*}
    2m\,\left(b_1+K(2m-1)\right)\,|x|^{2m-2}\,\leq\, 2(m-1)\a^{\frac{m}{m-1}}\,|x|^{2m}\,+\, 2\left(\frac{b_1+K(2m-1)}{\a}\right)^m\ ,
\end{equation*}
for any $\a>0$. Choosing $\a= b_0^{\frac{m-1}{m}}$, for $f(x)=|x|^{2m}$ we obtain
\begin{align}
    \langle U_0(x),\,\nabla f(x)\rangle\,+\,\sum_{k=1}^d V_k(x)^T\,\nabla^2 f(x)\,V_k(x)\,\leq\, -2{b}_0\, |x|^{2m}\,+\, C_m\ ,\label{bound_2m}
\end{align}
for some constant $C_m>0$ depending only on $m,\;K,\;b_1$ and ${b}_0$. Note that the same bound holds for $m=1$ too, with $C_1=b_1+K$.

Now, for any $m\in\N$ and $t>0$, by It\^{o}'s formula,
\begin{multline*}
\E_x\left[e^{2{b}_0t}f(x_t) \right]-f(x)=\\
 \E_x \left[ \int_0^t e^{2{b}_0 s} \left(2{b}_0 f(x_s)\,+\, \langle U_0(x_s),\nabla f(x_s) \rangle+\sum_{k=1}^d V_k(x)^T\,\nabla^2 f(x)\,V_k(x)\right)\,ds \right]\ .
\end{multline*}
Choosing $f(x)=\left|x\right|^{2m}$, by \eqref{bound_2m} we obtain
\begin{align*}
\E_x\left[e^{2{b}_0t}\left|x_t\right|^{2m} \right]\,\leq\,|x|^{2m} + C_m\,\int_0^t e^{2{b}_0 s} ds\,=\,  |x|^{2m} + \frac{C_m}{2{b}_0}\left(e^{2{b}_0 t}-1\right)\ .
\end{align*}
To conclude note that, by Holder's inequality with $p=\frac{2m}{q}$, choosing $m\in\N$ such that $2(m-1)<q\leq 2m$ (i.e.~$m$ is the smallest integer such that $p\geq 1$), we can write
\begin{align*}
    \E_x\left[ e^{\frac{q\,{b}_0t}{m}}\left|x_t\right|^q \right]\,&\leq\, \E_x\left[ e^{2{b}_0t}\left|x_t\right|^{2m} \right]^{\frac{q}{2m}}\,\leq\,\left(|x|^{2m}+\frac{C_m}{2b_0}\left(e^{2b_0 t}-1\right)\right)^{\frac{q}{2m}}\\
    &\leq\, 2^\frac{q}{2m}\,|x|^q\,+\, C_q\left(e^{2b_0 t}-1\right)^{\frac{q}{2m}} \ ,
\end{align*}
for some constant $C_q$ independent of $x,t,\delta$.

Thus,
\begin{equation*}
\E_x\left[\left|x_t\right|^{q} \right]\leq  e^{-\frac{q\,{b}_0t}{m}}\,2^\frac{q}{2m}\,|x|^q\,+\, C_q\,\left(1-e^{-2b_0 t}\right)^{\frac{q}{2m}} \ ,
\end{equation*}
for every $t\geq 0$, and this proves the statement (since $\frac{q}{2m} >\frac{1}{2}$, for $q>1$).

\end{proof}

In the lemma below we write ``uniformly in $\delta$" in short, meaning uniformly over  $\delta \in (0, 1/(2c_0))$, which is the range of $\delta$'s where the map $F^{\delta}$ is well defined.

\begin{lemma}\label{lemma_derivativesF}
    Assume that there exists a positive function $\lambda(x)>0$, $x\in\R^N$, such that
    \begin{equation}\label{cond_1dU0}
         v^T \nabla U_0(x) v\,\leq\, -\lambda(x)\,|v|^2\ ,
    \end{equation}
    for all $x,v\in\R^N$. Then, $F^\delta$ \eqref{def_Fdelta} satisfies
    \begin{equation}\label{1dF_bound}
        \left|\partial_i F^\delta (x)\right|\,\leq\,  \frac{1}{1+\delta \lambda(F^\delta(x))}\,\leq\, 1\ ,
    \end{equation}
    for every $i\in 1,\dots,N$, uniformly in $\delta>0$ and $x\in\R^N$.

    Moreover, if we further assume that there exists a constant $C>0$ independent of  $\delta>0$ (but possibly dependent on $N$) such that
    \begin{equation}\label{cond_2dU0}
        \left(\sum_{i,j=1}^N\left| \partial_i \partial_j U_0(x)\right|^2\right)^{1/2}\,\leq\, \frac{C}{\delta}\,\left( 1\,+\,\delta\lambda(x) \right)^3\ , 
    \end{equation}
    for every $x\in\R^N$ and $\delta>0$, then,
    \begin{equation}\label{2dF_bound}
    \left|\partial_j\partial_i F^\delta (x)\right|\,\leq\,\frac{\delta}{\left( 1\,+\,\delta\lambda(F^\delta(x)) \right)^3}\,\left(\sum_{m,m'=1}^N\left| \partial_m \partial_{m'} U_0(y)\big|_{y=F^\delta(x)}\right|^2\right)^{1/2}\,\leq\, C\ ,
    \end{equation}
    for every $i,j=1,\dots,N$, uniformly in $\delta>0$, for some constant $C>0$. In particular, $\|F^\delta\|_{\CC^2_b}\leq (1+C)$ uniformly in $\delta>0$. 
\end{lemma}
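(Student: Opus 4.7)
The starting point is the implicit definition: $F^\delta$ is characterised by
\[
F^\delta(x) - \delta\, U_0(F^\delta(x)) \,=\, x\ ,
\]
so that $F^\delta = G^{-1}$ with $G(y) := y - \delta\, U_0(y)$ and $\nabla G(y) = \1 - \delta\,\nabla U_0(y) =: A(y)$. My plan is to differentiate this relation once and then twice, and in each case use \eqref{cond_1dU0} to control the inverse of $A(F^\delta(x))$.

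\textbf{First derivative (proving \eqref{1dF_bound}).} Differentiating $F^\delta(G(y)) = y$ (or equivalently $G(F^\delta(x)) = x$) in the $i$-th coordinate yields $\nabla F^\delta(x) = A(F^\delta(x))^{-1}$, so $\partial_i F^\delta(x) = A(F^\delta(x))^{-1} e_i$. Condition \eqref{cond_1dU0} gives the coercivity $v^T A(y) v = |v|^2 - \delta\, v^T \nabla U_0(y) v \geq (1+\delta\,\lambda(y))\,|v|^2$, and hence the operator-norm bound $\|A(y)^{-1}\|_2 \leq (1+\delta\,\lambda(y))^{-1}$. Applied at $y = F^\delta(x)$ with $v = e_i$, this yields \eqref{1dF_bound}.

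\textbf{Second derivative (proving \eqref{2dF_bound}).} Writing $B(x) := A(F^\delta(x))^{-1}$, we have $\partial_j \partial_i F^\delta(x) = \partial_j [B(x) e_i]$. Using $\partial_j M^{-1} = -M^{-1}(\partial_j M) M^{-1}$ together with the chain rule (since $B$ depends on $x$ only through $y = F^\delta(x)$) gives
\[
\partial_j \partial_i F^\delta(x) \,=\, \delta\, B\, \Big(\sum_{m=1}^N B_{mj}\, H_m(F^\delta(x))\Big)\, B\, e_i\ ,
\]
where $(H_m(y))_{kl} := \partial_m \partial_l U_0^k(y)$ and we used $\partial_m A(y) = -\delta\, H_m(y)$ together with $\partial_j F^{\delta,m} = B_{mj}$. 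Taking norms and bounding the middle factor via Cauchy--Schwarz,
\[
\Big\|\sum_m B_{mj} H_m\Big\|_F^2 \,\leq\, \Big(\sum_m B_{mj}^2\Big)\sum_{m,l,k}(H_m)_{kl}^2 \,\leq\, \frac{1}{(1+\delta\,\lambda)^2}\sum_{m,m'=1}^N |\partial_m \partial_{m'} U_0|^2\ ,
\]
since $\sum_m B_{mj}^2 = |B e_j|^2 \leq \|B\|_2^2$. Combining with $\|B\|_2 \leq 1/(1+\delta\,\lambda(F^\delta(x)))$ (which appears twice, once for each outer $B$) produces the factor $(1+\delta\,\lambda)^{-3}$ and establishes \eqref{2dF_bound}. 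The uniform bound then follows immediately by plugging in \eqref{cond_2dU0}, which is tailored exactly to cancel $\delta/(1+\delta\,\lambda)^3$.

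\textbf{What I expect to be routine vs.\ delicate.} The implicit differentiation and the coercivity estimate for $A(y)^{-1}$ are standard. The only mildly delicate step is organising the second-derivative computation so that exactly three powers of $A^{-1}$ appear (one from each outer factor $B$, and a third from the $B_{mj}^2$ factor inside the Cauchy--Schwarz estimate); this is what makes the power $(1+\delta\,\lambda)^3$ match the assumption \eqref{cond_2dU0} and yield a $\delta$-uniform bound. I don't anticipate any genuine obstacle: once the algebra of $\partial_j (A(F^\delta(x))^{-1})$ is written down using the chain rule and $\partial_j M^{-1} = -M^{-1}(\partial_j M)M^{-1}$, the estimate is essentially forced.
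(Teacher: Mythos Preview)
Your proposal is correct and follows essentially the same strategy as the paper: implicit differentiation of $G(F^\delta(x))=x$ with $G(y)=y-\delta U_0(y)$, the coercivity bound $\|A(y)^{-1}\|_2\le(1+\delta\lambda(y))^{-1}$ from \eqref{cond_1dU0}, and then a second differentiation producing three factors of $A^{-1}$. The only cosmetic difference is that the paper works componentwise (computing $\langle A(y)e_k,\partial_i\partial_j F^\delta(x)\rangle$ and then summing over $k$), whereas you use the matrix identity $\partial_j M^{-1}=-M^{-1}(\partial_j M)M^{-1}$ directly; both routes yield exactly the bound $\delta(1+\delta\lambda)^{-3}\big(\sum_{m,m'}|\partial_m\partial_{m'}U_0|^2\big)^{1/2}$.
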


\begin{note}
    Note that, for every given $x\in\R^N$, the minimum of $\frac{1}{\delta}\,\left(1+\delta \lambda(x)\right)^3$ is $\frac{27}{4}\,\lambda(x)$, obtained at $\delta=(2\lambda(x))^{-1}$. Therefore, under the following more practical condition,
    \begin{equation*}
        \left(\sum_{i,j=1}^N\left| \partial_i \partial_j U_0(x)\right|^2\right)^{1/2}\,\leq\, \widetilde{C}\,\left(1+\lambda(x)\right)\ ,\qquad x\in\R^N\ ,
    \end{equation*}
    for some $\widetilde{C}>0$, with $\lambda(x)>0$ as in \eqref{cond_1dU0}, we can see that condition \eqref{cond_2dU0} is satisfied for every $\delta\in (0,1)$.

    Moreover, note that under Assumption \ref{ass_for_modified_expdecay}, all the hypotheses of Lemma \ref{lemma_derivativesF} are satisfied.
\end{note}

\begin{proof}[Proof of Lemma \ref{lemma_derivativesF}]
    We start with bounding the first-order derivatives. By definition of $F^\delta(x)\in\R^N$, we can write
\begin{equation*}
    \partial_i F^\delta(x)\,=\, e_i\,+\,\delta\,\nabla U_0(y)\big|_{y=F^\delta(x)} \,\partial_i F^\delta(x)\ ,
\end{equation*}
where $(e_i)_{i=1,\dots,N}$ is the canonical orthonormal basis on $\R^N$. Hence, noting that under condition \eqref{cond_1dU0} the matrix $\1\,-\,\delta\, \nabla U_0(y)$ is invertible for every given $y\in\R^N$ and $\delta>0$, we obtain
\begin{equation}\label{dF}
    \partial_i F^\delta(x)\,=\,\left(\1\,-\,\delta\, \nabla U_0(y)\big|_{y=F^\delta(x)} \right)^{-1} e_i\ .
\end{equation}

Denoting $y=F^\delta(x)$ and $A(y):=\1\,-\,\delta\, \nabla U_0(y)$ to simplify the presentation, we obtain $|\partial_i F^\delta(x)|\leq \left\|A(y)^{-1}\right\|_2$, with
\begin{equation*}
    \|A(y)^{-1}\|_2\,=\, \max_{v\neq 0}\frac{|A(y)^{-1} v|}{|v|}\,=\, \max_{w\neq 0} \frac{| w|}{|Aw|}\ ,
\end{equation*}
    and, by condition \eqref{cond_1dU0}, 
    \begin{equation*}
        |A(y) \, w|\,\geq\, \frac{|w^T A(y) w|}{|w|}\,=\,\frac{|w|^2 -\delta w^T \nabla U_0(y) w}{|w|}\,\geq\, (1+\delta \lambda(y))|w|\ .
    \end{equation*}
    Therefore, we obtain
    \begin{equation*}
        |\partial_i F^\delta(x)|\,\leq\, \|A(y)^{-1}\|_2\,\leq\,  \frac{1}{1+\delta \lambda(F^\delta(x))}\ ,
    \end{equation*}
    for every $\delta>0$, which proves \eqref{1dF_bound}.

Now, consider the second-order derivatives,
\begin{align*}
    \left|\partial_j\partial_i F^\delta (x)\right|^2\,&=\, \sum_{k=1}^N \left|\left(\partial_j\partial_i F^\delta (x)\right)_k\right|^2\ ,
\end{align*}
where $(v)_k=\langle e_k, v\rangle$ denotes the $k$-th component of a vector $v\in\R^N$. Noting that
\begin{equation*}
    \left(\partial_i F^\delta(x)\right)_k\,=\, \delta_{i,k} + \delta\,\langle \nabla\left( U_0(y)\right)_k\big|_{y=F^\delta(x)} ,\, \partial_i F^\delta (x)  \rangle\ ,
\end{equation*}
where $\delta_{i,k}$ denotes the Kronecker delta, we can write
\begin{multline*}
     \left(\partial_j\partial_i F^\delta(x)\right)_k\,=\,\partial_j\partial_i \left(F^\delta(x)\right)_k\,=\,\delta\,\langle \nabla^2\left( U_0(y)\right)_k\big|_{y=F^\delta(x)} \,\partial_j F^\delta(x),\, \partial_i F^\delta (x)  \rangle\\
     +\delta\,\langle \nabla\left( U_0(y)\right)_k\big|_{y=F^\delta(x)} ,\, \partial_j\partial_i F^\delta (x)  \rangle\ .
\end{multline*}

Rearranging,
\begin{equation*}
    \langle A(y)\,e_k,\, \partial_i\partial_j F^\delta(x) \rangle\,=\, \delta\,\langle \nabla^2\left( U_0(y)\right)_k\big|_{y=F^\delta(x)} \,\partial_j F^\delta(x),\, \partial_i F^\delta (x)  \rangle\, . 
\end{equation*}
 By \eqref{dF}, we have
\begin{equation*}
    \langle A(y) \,e_k,\, \partial_i\partial_j F^\delta(x) \rangle\,=\, \delta\,\langle \nabla^2\left( U_0(y)\right)_k\big|_{y=F^\delta(x)} \,A(y)^{-1}e_j,\, A(y)^{-1}e_i  \rangle\ .
\end{equation*}
Therefore,
\begin{align*}
    \left| \langle e_k,\,\partial_i \partial_j F^\delta(x) \rangle \right|\,&\leq\, \|A(y)^{-1}\|_2 \,\left|  \langle A(y)\,e_k,\,\partial_i \partial_j F^\delta(x) \rangle \right|\\
    &\leq\,\delta\,\|A(y)^{-1}\|_2^3 \,\left\| \nabla^2 \left(U_0(y)\right)_k\big|_{y=F^\delta(x)}\right\|_2\ ,
\end{align*}
for every $k,i,j=1,\dots,N$. By \eqref{1dF_bound} (and recalling that $\|\cdot\|_2\leq\|\cdot\|$), we can write
\begin{align*}
    \left|\partial_i \partial_j F^\delta(x) \right|\,&=\,\left(\sum_{k=1}^N \left| \langle e_k,\,\partial_i \partial_j F^\delta(x) \rangle \right|^2\right)^{1/2} \nonumber\\
    &\leq\,\frac{\delta}{\left( 1\,+\,\delta\lambda(F^\delta(x)) \right)^3}\,\left(\sum_{k=1}^N\left\| \nabla^2 \left(U_0(y)\right)_k\big|_{y=F^\delta(x)}\right\|^2\right)^{1/2}\nonumber\\
    &=\,\frac{\delta}{\left( 1\,+\,\delta\lambda(F^\delta(x)) \right)^3}\,\left(\sum_{m,m'=1}^N\left| \partial_m \partial_{m'} U_0(y)\big|_{y=F^\delta(x)}\right|^2\right)^{1/2} \ ,
\end{align*}
thus we see that condition \eqref{cond_2dU0} is sufficient to ensure \eqref{2dF_bound}.
    
\end{proof}

\bibliographystyle{abbrvnat}
\bibliography{ms}

\end{document}